\renewcommand*{\backref}[1]{\ifx#1\relax \else Page #1 \fi}
\renewcommand*{\backrefalt}[4]{%
  \ifcase #1 \footnotesize{(Not cited.)}%
  \or        \footnotesize{(Cited on page~#2.)}%
  \else      \footnotesize{(Cited on pages~#2.)}%
  \fi
}%
\newcommand{\bluecheck}{}%
\DeclareRobustCommand{\bluecheck}{%
  \tikz\fill[scale=0.4, color=blue]
  (0,.35) -- (.25,0) -- (1,.7) -- (.25,.15) -- cycle;%
}
\newcommand{\algo}{Hyper Policy Gradient Descent}
\newcommand{\salgo}{HPGD}
\newcommand{\prob}{Contextual Bilevel Reinforcement Learning}
\newcommand{\sprob}{CB-RL}
\DeclareMathOperator{\softmax}{softmax}
\title{Contextual Bilevel Reinforcement Learning for Incentive Alignment}
\author{%
    Vinzenz Thoma \footnotemark[1] \\
    ETH AI Center\\
    \texttt{vinzenz.thoma@ai.ethz.ch} \\
    \And
    Barna Pasztor \thanks{Equal Contribution} \\
    ETH AI Center\\
    \texttt{barna.pasztor@ai.ethz.ch} \\
    \And
    Andreas Krause \\
    ETH Zurich\\
    \texttt{krausea@ethz.ch} \\
    \And
    Giorgia Ramponi \\
    University of Zurich
    \\
    \texttt{giorgia.ramponi@uzh.ch} \\
    \And
    Yifan Hu \\
    EPFL \& ETH Zurich
    \\
    \texttt{yifan.hu@epfl.ch} \\
}
\begin{document}

\maketitle

\begin{abstract}
The optimal policy in various real-world strategic decision-making problems depends both on the environmental configuration and exogenous events. %
For these settings, we introduce {\em \prob\ } (\sprob), a stochastic bilevel decision-making model, where the lower level consists of solving a contextual Markov Decision Process (CMDP).
\sprob\ can be viewed as a Stackelberg Game where the leader and a random context beyond the leader's control together decide the setup of many MDPs that potentially multiple followers best respond to.
This framework extends beyond traditional bilevel optimization and finds relevance in diverse fields such as RLHF, tax design, reward shaping, contract theory and mechanism design.
We propose a stochastic {\em \algo\ (\salgo)} algorithm to solve \sprob, and demonstrate its convergence.
Notably, \salgo\ uses stochastic hypergradient estimates, based on observations of the followers' trajectories. Therefore, it allows followers to use any training procedure and the leader to be agnostic of the specific algorithm, which aligns with various real-world scenarios. 
We further consider the setting when the leader can influence the training of followers and propose an accelerated algorithm. 
We empirically demonstrate the performance of our algorithm for reward shaping and tax design.\looseness=-1
\end{abstract}

\section{Introduction}
\label{sec:introduction}
In Reinforcement Learning (RL), Markov Decision Processes (MDPs) \citep{puterman2014markov} provide a versatile framework for capturing sequential decision-making problems across various domains such as  health care \citep{yu2021reinforcement}, energy systems \citep{perera2021applications}, economics \citep{charpentier2021reinforcement}, and finance \citep{hambly2023recent}. A considerable amount of work has been devoted to solving standard MDPs~\citep{arulkumaran2017deep, sutton2018reinforcement, wang2022deep}. However, in many applications, MDPs can be configured on purpose or affected by exogenous events, both of which can significantly impact the corresponding optimal policies. For example, in a simplified economic framework, the optimal decision of an individual household depends both on public policies and economic uncertainties \citep{Curry2023Learning, hill2021solving}. The policy maker in turn has to make decisions, anticipating the best response of differently-minded individual agents to its policies and  exogenous events outside the policy maker's control.

\looseness=-1
To study such problems, we introduce \prob~(\sprob), a hierarchical decision-making framework where followers solve a contextual Markov decision processes (CMDP)~\cite{hallak2015contextual}, configured by the leader. \sprob\ is formalized as:
\begin{equation}
    \label{problem:original}
     \begin{aligned}
    \min_{x} \quad & F(x):=\EE_{\xi} [f(x,\pi^*_{x,\xi}, \xi)] & \text{(leader, upper-level)} \\
    \mathrm{where} \quad  & \pi^*_{x,\xi} = \argmax_{\pi} J_{\lambda,x,\xi}(\pi) & \text{(follower, lower-level)},
    \end{aligned}
\end{equation}
where $x$ represents the model configuration of the CMDP chosen by the leader, $\xi$ represents the context that followers encounter, and the function $J_{\lambda,x,\xi}$ denotes the (entropy-regularized) reward of the CMDP for a policy $\pi$, a given model parameters $x$, a context $\xi$, and a regularization parameter $\lambda$.  %

In our framework, the leader chooses $x$ to configure various aspects of the CMDP, such as state transitions, the initial state distribution, and the followers' reward functions. Modeling the problem as a CMDP instead of a standard MDP is essential when modeling situations where the environment is influenced by side information or personal perferences. For example, the context $\xi$ can capture a wide range of real-world scenarios such as: (1) there is one follower, who responds optimally not only to the leader's chosen $x$ but also to a side information $\xi$, such as weather or season, (2) there are multiple followers, each aiming to maximize their own utility, in which case $\xi$ represents different possible follower preferences, and (3) there are multiple followers, each facing an uncertain contextual variable, i.e., $\xi = (i,\eta)$ represents the $i$-th follower encountering a specific context $\eta \sim \mathbb{P}_\eta$.

The proposed framework extends the concept of contextual bilevel optimization~\citep{hu2024contextual}, where the follower engages in static contextual stochastic optimization rather than sequential decision-making.
It also expands upon traditional MDP model design~\cite{zhang2018learning, chen2022adaptive} and configurable MDPs~\cite{metelli2018configurable, ramponi2021learning}, where typically only one follower attempts to solve an MDP, as opposed to CMDPs.
Beyond these fields, our framework finds various applications in Principal-Agent problems \citep{ben2023principal}, RLHF \citep{chakraborty2023parl,shen2024principled}, dynamic Stackelberg games~\cite{Gerstgrasser2023oracles, Wang2023Differentiable}, Security Games~\citep{sinha2018stackelberg, letchford2013optimal}, 
dynamic mechanism design~\cite{Curry2024Automated}, contract theory \cite{Ivanov2024Principal,Wu2024Contractual}, and tax design~\citep{Curry2023Learning,zheng2022ai,hill2021solving}. See \Cref{sec:applications} for the concrete \sprob~formulations of these applications.

Despite its wide applicability, to the best of our knowledge, there are no algorithms specifically designed for \sprob. The closest is an algorithm from model design for MDPs~\citep{chen2022adaptive}, which can be adapted to our setting after some modifications. However, \citep{chen2022adaptive} requires the follower to solve the MDP deterministically using soft value iteration. Moreover, the full hypergradient is computed in each iteration, as part of which the leader requires access to the lower-level computations. Both of these aspects significantly restrict how well the method can scale to larger settings.  

Instead, in this work, we propose a stochastic {\em \algo\ (\salgo)} algorithm for the leader that solely relies on trajectory data from followers. The followers can use a variety of possibly stochastic learning algorithms to find an approximately optimal policy for the lower-level CMDPs. The leader in turn is agnostic of the exact algorithm used, as the hypergradient is estimated using only trajectory samples generated from the follower policy. The fact that both the lower-level and the hypergradient computation are stochastic makes \salgo\ salable to large problem settings.

We show non-asymptotic convergence of \salgo\ to a stationary point and validate these findings through experimental evidence.
In scenarios where followers grant the leader full control over their training procedure, as posited in prior work~\citep{chen2022adaptive}, we present an accelerated \salgo\ algorithm,  designed to minimize the number of lower-level iterations. \looseness=-1%

\begin{table}[t]
    {
    \footnotesize
    \caption{Summary of Related Works in Bilevel Reinforcement Learning.}
    \renewcommand\arraystretch{1.5}
    \begin{center}
     \makebox[\textwidth]{
    \begin{tabular}{c|ccccc|cc|ccc}
        \toprule[1.5pt]
        {}
        &\multicolumn{2}{c}{Context}
        & \multirow{2}{*}{$r_x$}
        & \multirow{2}{*}{$P_x$}
        & \multirow{2}{*}{$\mu_x$}
        & Agnostic
        & Deter
        & Upper
        & \multicolumn{2}{c}{Lower} 
        \\

        {}
        & Multi
        & Side Info
        &
        & 
        &
        & Control
        & Stoch
        & Iterations
        & Iterations
        & Method
        \\
        \hline

        {\citep{chen2022adaptive}}
        & 
        & 
        & \bluecheck
        & \bluecheck
        & 
        & Control
        & Deter
        & $\cO(\delta^{-2})$*
        & $\cO(\log(\delta^{-1}))$
        & Soft-VI
        \\
        \hline

        {\citep{chakraborty2023parl}}
        & 
        & 
        & \bluecheck
        & 
        & 
        & Agnostic
        & Deter
        & $\cO(\delta^{-2})$
        & $\cO(\log(\delta^{-1}))$
        & PG
        \\
        \hline

        {\citep{shen2024principled}}
        & 
        & 
        & \bluecheck
        & \bluecheck
        & 
        & Agnostic
        & Deter
        & $\cO(\delta^{-2})$
        & $\cO(\log(\delta^{-1}))$
        & PMG
        \\
        \hline
     
        {\citep{yang2024bilevel}}
        & 
        &
        &{\bluecheck}
        &
        &
              & Agnostic
        & Deter
        &$\cO(\delta^{-2})$
        & $\cO(\log(\delta^{-1}))$
        & PMG
        \\

       \hline
        \hline

        \multirow{3}{*}{\salgo}
        & \multirow{3}{*}{\bluecheck}
        & \multirow{3}{*}{\bluecheck}
        & \multirow{3}{*}{\bluecheck}
        & \multirow{3}{*}{\bluecheck}
        & \multirow{3}{*}{\bluecheck}
        & \multirow{3}{*}{Agnostic}
        & \multirow{3}{*}{Stoch}
        & \multirow{3}{*}{$\cO(\delta^{-4})$}
        & $\cO(\log(\delta^{-1}))$ 
        & Soft-VI
        \\

        {}
        &
        &
        &
        &
        &
        &
        &
        &
        &$\cO(\log(\delta^{-1}))$ 
        & NPG
        \\

        {}
        &
        &
        &
        &
        &
        &
        &
        &
        &$ \Tilde\cO(\delta^{-2})$
        & Soft-Q
        \\
        \hline
        
        {\salgo}
        & \bluecheck
        & \bluecheck
        & \bluecheck
        & \bluecheck
        & \bluecheck
        & Control
        & Stoch
        & $\cO(\delta^{-4})$
        & $\cO(\log(\delta^{-1}))$ 
        & RT-Q
        \\

\hline
\hline

         \multicolumn{11}{p{\textwidth}}{
         \vspace{-0.3cm} %
            \parbox{\textwidth}{Multi: Multiple followers. Side Info: Side information. Context: CMDP instead of MDP. $r_x$, $P_x$, and $\mu_x$ denote the dependence of rewards, transitions, and initial state distribution on $x$. Agnostic vs. Control: Whether leader can influence the training of the follower(s). Deter vs. Stoch: Requiring full knowledge of hypergradient or estimating it from samples. Complexity is based on $\|\nabla F(x)\|^2 \leq \delta^2$ instead of $\|\nabla F(x)\|^2 \leq \delta$. *\citep{chen2022adaptive} assumes convexity of $F$ in $x$ and considers $F(x) - \min F(x) \leq \delta$. VI: Value Iteration. PMG: Policy Mirror Gradient. PI: Policy Iteration. NPG: Natural Policy Gradient. Q: Q-learning. RT-Q: Randomly Truncated Soft Q-learning.}
         } \\
         \bottomrule[1.5pt]
    \end{tabular} 
    }
    \label{tab:summary}
    \end{center}
    }
    \end{table}

\noindent \textbf{Our Contributions}
\begin{itemize}[leftmargin=2em]
    \item We introduce \emph{\prob\ (\sprob)} that captures a wide range of important applications (Sec. \ref{sec:problem}). It is the first bilevel reinforcement learning framework that through the context $\xi$ allows multiple followers and side information. %
    We summarize the key differences to the previous literature in \Cref{tab:summary}.
    \item We propose a stochastic \emph{\algo\ (\salgo)} algorithm that performs stochastic gradient descent on the upper-level objective (Sec. \ref{sec:oracle}). Importantly, we are the first to estimate the hypergradient from lower-level trajectory samples instead of computing it exactly, while further providing convergence guarantees. Furthermore, our approach \emph{is agnostic of the learning dynamics of the agent}, enabling followers to utilize a wide range of algorithms to solve the lower-level CMDPs. We only assume the leader can sample lower-level trajectories from an inexact oracle. For several widely-used RL algorithms, we explicitly show how to use them to build the inexact oracle needed by \salgo. Noteably, we are the first to consider stochastic lower-level learning algorithms, such as soft Q-learning.
    \item We establish the non-asymptotic convergence rate of \salgo~to a stationary point of the overall objective---despite the nonconvex lower-level problem (Sec. \ref{sec:oracle}).  When assuming full hypergradient information, i.e., deterministic updates, the outer iteration complexity of \salgo\  reduces to $\cO(\delta^{-2})$, recovering previous results. Moreover, we discuss how to estimate the hypergradient if the upper-level loss function admits a specific form of cumulative costs (Sec. \ref{sec:linear_loss_function}).
    \item When the leader is allowed to control the follwers' learning dynamics (Sec \ref{sec:full_access}), we propose a stochastic accelerated algorithm denoted as \salgo\ RT-Q (Alg \ref{alg:hpgdrtq}). It greatly reduces the number of lower-level soft Q-learning iterations from $\tilde\cO(\delta^{-2})$ to $\cO(\log(\delta^{-1}))$, such that we recover the rate for deterministic lower-level updates. For this result we leverage several techniques, such as mini-batches, multilevel Monte Carlo \cite{Giles2015Multilevel,hu2024contextual}, and importance sampling.  
    \item We demonstrate the performance of \salgo\ for principal agent reward shaping and tax design (Sec. \ref{sec:numerical_experiments}).  In certain cases, the stochastic updates of \salgo\ are beneficial as they avoid local minima. Moreover, \salgo~ needs fewer outer iterations compared to benchmark algorithms.
    
\end{itemize}

\section{Related Work}
\label{sec:related work}
\textbf{Stochastic bilevel optimization} has been extensively explored in the literature \cite{dempe2002foundations, bard2013practical}. In recent years, there is a pivotal shift to non-asymptotic analysis of stochastic gradient methods \cite{ghadimi2018approximation, chen2021closing, khanduri2021near, kwon2023fully,kwon2024complexity, Liu2021Value}. \cite{hu2024contextual} propose contextual stochastic bilevel optimization where the lower level solves a static contextual optimization. Our work generalizes to the lower level solving a contextual MDP. This poses unique challenges in terms of hypergradient estimation and sample generation. Comparing to bilevel optimization, leveraging the special structure of \sprob, we avoid Hessian and Jacobian estimation of the lower-level MDP when computing the hyper policy gradient, which is crucial for scalability.

\textbf{Configurable MDP} \citep{metelli2018configurable} is an extension of a traditional MDP 
allowing external parameters or settings to be adjusted by the decision-maker, often referred to as the \textit{configurator}. 
Only recently some works studied the case where the configurator has a different objective than the agent~\cite{ramponi2021learning}. However, that work assumes access to a finite number of parameters that the configurator can control, while our model goes beyond this assumption.
In addition, our model captures the variability and uncertainty that the agent could face in the same configuration environment.

\textbf{Stackelberg games} are a game theoretic framework, where a leader takes actions to which one or multiple followers choose the best response \cite{Stackelberg1934Marktform}. Several existing lines of work have studied solving variants of Stackelberg games.
 Examples include Stackelberg equilibrium solvers~\cite{Fiez2020Implicit,Gerstgrasser2023oracles}, opponent shaping \cite{Foerster2018Learning,Yang2020Learninga}, mathematical programs with equilibrium constraints \cite{Liu2022Inducing,Wang2023Differentiable,wang2022coordinating}, inducing cooperation~\cite{Baumann2020Adaptive,Balaguer2022Good}, steering economic simulations~\cite{Curry2023Learning,zheng2022ai}. These works are either too general with limited implications for our problem or consider entirely distinct settings.

\textbf{Multi-agent RL}~(MARL) studies multiple agents interacting in a joint environment, i.e., their actions together determine the next state~\citep{zhang2021multi,shi2024sample}. In \sprob\, the lower level CMDPs can be seen as a special instance of MARL where the interactions of the followers are restricted to jointly influencing the decision of the leader.

\textbf{Bilevel RL} studies how to design additional rewards or change the underlying MDP to achieve desirable learning outcomes. Many applications are formulated as bilevel RL, such as environment design for generalization \cite{Dennis2020Emergent,Diaz2022Generalization,Yang2022Game}, reward shaping \cite{HadfieldMenell2017Inversea,hu2020learning,Huang2024Learning, Wu2024Robust}, safe reinforcement learning~\cite{turchetta2020safe}, optmizing conditional value at risk\cite{Xia2023Risk‐sensitive}, and model design \cite{chen2022adaptive,zhang2018learning,Brown2024Markov}. Previous work on bilevel RL considers a special case of our setting when there is only one lower-level MDP~\citep{chen2022adaptive,chakraborty2023parl,shen2024principled,yang2024bilevel}. In particular, \cite{chen2022adaptive} focus on the case when the leader has control on the follower's training procedure. \cite{shen2024principled} further extend from one single lower-level MDP to a lower-level min-max game. \cite{chakraborty2023parl} and the concurrent work of \cite{yang2024bilevel} focus on the case when the leader can only influence the reward of the MDP. 

The introduction of the context makes \sprob\ harder to solve as there can be many followers, each with its own preferences, and their best response policies change even for the same leader decision $x$ when facing different contextual uncertainties. Multiple followers and additional side information are very common, which highlights the practical relevance of our work. In addition, the algorithms in the aforementioned works focus on deterministic updates on the upper and lower level decisions, i.e., assuming access to the full hypergradient and performing exact policy gradient/value iteration, which is both computationally hard and not feasible for large-scale practical applications. To the best of our knowledge, we are the first to provide a convergence analysis for the stochastic case, when the hypergradient is estimated from samples and the lower level uses a stochastic update rule. %

\section{Problem Formulation and Applications}
\label{sec:problem}
In this section, we formalize \sprob\ and illustrate its versatility using concrete applications, including RLHF, dynamic mechanism design, tax design, and principal-agent problems.
\subsection{Problem Formulation}

We consider a bilevel optimization problem, where the follower solves a Contextual Markov Decision Process (CMPD) and the leader controls the configuration of the CMDP.
In particular, the leader chooses a parameter $x \in X \subseteq \RR^{d}$ and nature chooses a random context $\xi$ according to a distribution $\PP_\xi$. Together $(x,\xi)$ parameterizes an MDP $\cM_{x,\xi}$, which the follower aims to solve. $\cM_{x,\xi}$ is defined by a tuple $(\cS,\cA, r_{x,\xi}, P_{x,\xi}, \mu_{x,\xi}, \gamma)$, where $\cS$ denotes the state space, $\cA$ denotes the action space, $r_{x,\xi}(\cdot,\cdot):\cS\times\cA\rightarrow \RR$ is the reward function, $P_{x,\xi}(\cdot;\cdot,\cdot): \cS\times\cS\times\cA\rightarrow [0,1]$ denotes the transition kernel, $\mu_{x,\xi}$ indicates the initial state distribution, and $\gamma$ is the discount factor. The subscript $x,\xi$ implies that rewards, transitions, and initial state distribution depend on the leader's decision $x$ and the context $\xi$. 
Connecting to previous works, for a fixed $x$, $\cM_{x,\xi}$  is a \emph{contextual MDP}~\cite{hallak2015contextual} with respect to $\xi$. For a fixed $\xi$, $\cM_{x,\xi}$ generalizes a \emph{configurable MDP}~\cite{metelli2018configurable}. 
Given $\cM_{x,\xi}$, the follower maximizes an entropy-regularized objective by choosing a policy $\pi_{x,\xi}$, where $\pi_{x,\xi}(a;s)$ denotes the probability of choosing action $a$ in state $s$.
\begin{equation}
\label{eq:mdp_problem}
\begin{aligned}
    \max_\pi J_{\lambda,x,\xi}(\pi)=\EE_{s_0}\sbr{V^\pi_{\lambda,x,\xi}(s_0)} = \EE_{s_0}\sbr{\EE^\pi_{P_{x,\xi}} \sbr{\sum_{t=0}^\infty \gamma^t \left(r_{x,\xi}(s_t, a_t) + {\lambda} H(\pi;s_t)\right)}},
\end{aligned}
\end{equation}
where $s_0\sim\mu_{x,\xi}$, $a_t \sim \pi(\cdot;s_t), s_{t+1} \sim P_{x,\xi}(\cdot;s_t, a_t)$ and $H(\pi;s)=-\sum_a \pi(a;s)\log\pi(a;s)$.
We call $\lambda> 0$ the regularization parameter and $V^\pi_{\lambda,x,\xi}$ the value function. As standard in RL literature, we define the related Q and advantage functions as:
\begin{align}
    Q^\pi_{\lambda,x,\xi}(s,a)&=r_{x,\xi}(s,a)+\gamma\EE_{s'\sim P_{x,\xi}(\cdot;s,a)}\sbr{V^\pi_{\lambda,x,\xi}(s')} \nonumber\\
 A^\pi_{\lambda,x,\xi}(s,a)   &=Q^\pi_{\lambda,x,\xi}(s,a)-V^\pi_{\lambda,x,\xi}(s)=Q^\pi_{\lambda,x,\xi}(s,a)-\sum_{a'}\pi(a';s) Q^\pi_{\lambda,x,\xi}(s,a')
 \label{eq:adv_def}.
\end{align}
The unique optimal policy for \eqref{eq:mdp_problem} is given by $\pi^*_{x,\xi}(s;a)\propto \exp(Q^*_{\lambda,x,\xi}(s,a)/\lambda)$, i.e., the softmax of the optimal Q-function \cite{Nachum2017Bridging}.\footnote{For brevity, we notationally drop the dependence of $\pi_{x,\xi}$ on $\lambda$.} Given $x,\pi^*_{x,\xi},\xi$, the leader in turn incurs a loss $f(x,\pi^*_{x,\xi},\xi) \in \RR$, which it wants to minimize in expectation over $\PP_\xi$. To do so, it needs to choose $x$ to align the follower's policy $\pi^*_{x,\xi}$ with the leader's objective. \sprob\ can thus be formulated as the following stochastic bilevel optimization problem: 
\begin{equation}
    \label{problem:multi_lower_level_conditional}
     \begin{aligned}
    \min_{x} \quad & F(x):=\EE_{\xi} [f(x,\pi^*_{x,\xi}, \xi)] & \text{(leader, upper-level)} \\
    \mathrm{where} \quad  & \pi^*_{x,\xi} = \argmax_{\pi} J_{\lambda,x,\xi}(\pi). & \text{(follower, lower-level)}
    \end{aligned}
\end{equation}
\Cref{problem:multi_lower_level_conditional} is well-defined due to entropy regularization ensuring the uniqueness of $\pi^*_{x,\xi}$. It further ensures $\pi^*_{x,\xi}$ is differentiable, stabilizes learning, and appears in previous works~\cite{chen2022adaptive}. %

\subsection{Applications: RLHF, Tax Design, Reward Shaping, Contract Design, and Mechanism Design}
\label{sec:applications}

\sprob~ captures several important real-world problems, which we discuss below. For a clearer exposition, we omit the entropy regularization term at the lower level. However, we stress that some of the referenced works explicit use entropy regularization \cite{Curry2024Automated,chen2022adaptive} or make overlapping assumptions such as unique optimal policies \cite{chakraborty2023parl}. For problems without explicit entropy regularization we refer to \cite{chen2022adaptive, Mei2020Global,Dai2018SBEED,geist2019theory} who have shown that entropy-regularized RL approximates the unregularized problem both in the upper and lower level as $\lambda\rightarrow0$.

\noindent \textbf{Reinforcement Learning from human feedback (RLHF)} considers the setting where an agent tries to learn a task from human feedback. The difficulty stems from the fact that the human feedback is given as preferences over two possible trajectories and not directly as a reward \cite{Christiano2017Deep}. The feedback is of the form $\{\tau_0,\tau_1,l\}$ where $\tau_0,\tau_1$ are two trajectories and $l\in{0,1}$ indicates whether $\tau_0$ is preferred over $\tau_1$ or vice versa.
It has been shown that this problem can be framed as \sprob, where the upper-level tries to learn rewards that minimize the cross-entropy loss, between the actual and predicted labels, using the Bradley-Terry Model and the lower level finds the optimal policy with respect to that reward function \cite{chakraborty2023parl,shen2024principled},
\begin{equation*}
\label{eq:rlhf}
\begin{aligned}
    \max_{x} \ &\EE_{\tau_0,\tau_1\sim \cD(\pi^*_{x}),l}\left[(1-l)\log \PP \left(\tau_0 \succ \tau_1|r_x\right)+l\log\PP\left(\tau_1\succ \tau_0|r_x\right)\right]
   \\ &\text{s.t.} \ \pi_{x}^{*}(\cdot) = \argmax_\pi \EE \left[ \sum_{t=0}^H \gamma^t r_x(s_t,a_t) \right]. \\
\end{aligned}
\end{equation*}
Here $H$ is the time horizon. $D$ is the sampling distribution of trajectories using $\pi^*_{x,\xi}$ and \begin{equation*}
    \PP \left(\tau_0 \succ \tau_1|r_x\right) =\frac{\exp \sum_{t=0}^H \gamma^t r_x(s^0_t,a^0_t)}{\exp \sum_{t=0}^H \gamma^t r_x(s^0_t,a^0_t)+\exp \sum_{t=0}^H \gamma^t r_x(s^1_t,a^1_t)}.
\end{equation*}
Note in the case of standard RLHF the context becomes trivial.

\noindent \textbf{Tax Design for Macroeconomic Modeling} considers a public entity setting tax rates and representative households responding optimally by balancing their short-term utility of consumption and long-term wealth accumulation \cite{hill2021solving, chen2022adaptive, zheng2022ai}. A potential formulation of this problem as \sprob\ is
\begin{equation*}
\label{eq:tax_design_problem}
\begin{aligned}
    \max_{x, y} \EE_\xi\left[\phi(x, y, \pi^*_{x,y,\xi},\xi)\right]~
   \text{s.t.} \ \pi_{x,y,\xi}^{*}(\cdot) = \argmax_\pi \EE \left[ \sum_{t=0}^\infty \gamma^t \Big(r_{\xi}^W(s_t) + r^C_{y, \xi}(\pi(s_t))\Big) \right], \\
\end{aligned}
\end{equation*}
where $x$ and $y$ denote the income and value-added tax rates, respectively, and $\phi$ defines the social welfare objective of the leader. %
The state $s_t$ defines the wealth of a household while their actions decide their working hours and consumption in each time step. The reward function $r^W_{\xi}$ and $r^C_{y, \xi}$ define the households' utility functions for wealth and consumption, respectively.
The value-added tax rate $y$ affects the consumption utility function $r^C_{y, \xi}$ while the income tax $x$ changes the transition kernel modeling wealth accumulation, i.e., $s_{t+1} \sim P_{x,\xi}(\cdot;s_t, a_t)$.
$\xi$ represents the preferences of the households over several consumption goods and their productivity in this problem formulation.

\noindent \textbf{Population Principal-Agent Reward Shaping} considers a principal aiming to craft a non-negative bonus reward function $r^B_x$, parameterized by $x$, to motivate an agent \citep{ben2023principal, yu2022environment, zhang2008valuebased}. Commonly, a principal faces multiple agents that form a distribution. Each agent has its own individual reward function $r_\xi$. This scenario, termed \textit{population principal-agent reward shaping} is captured by our \sprob\ framework.
    \begin{equation*}
    \label{eq:principal_agent_problem}
    \begin{aligned}
        \max_x \EE_{\xi}^{\pi_{x,\xi}^{*}}\left[\sum_{t=0}^\infty \gamma^t \overline{r}(s_t, \pi_{x,\xi}^*(s_t))\right]\ 
       \text{s.t.} \ \pi_{x,\xi}^{*}(\cdot) = \argmax_\pi \EE_\pi \left[ \sum_{t=0}^\infty \gamma^t \Big(r_\xi(s_t, \pi(s_t))+r^B_x(s_t,\pi(s_t))\Big) \right]. \\
    \end{aligned}
    \end{equation*}
Here $\mathbb{E}_\xi$ denotes the expectation over the distribution of agents and the trajectories. The policy $\pi_{x,\xi}^*(\cdot)$ is the optimal response of the $\xi$-th agent to the composite reward function $r_\xi + r^B_x$. The principal's reward is $\overline{r}(s_t, a_t)$ when the agent visits the state action pair $(s_t,a_t)$. 

\noindent\textbf{Dynamic Contract Design} studied by
\cite{Ivanov2024Principal,Wu2024Contractual} is similar to the above reward shaping. Generalizing it to a contextual setting, the problem consists of an agent of type $\xi$ that incurs a cost $c_{\xi}(s,a)$ for taking action $a$ in state $s$. The principal in turn gets a reward $r(s,s')$ for transitioning from state $s$ to $s'$, but cannot observe the agent's action. It can however offer contracts $x(s,s')$ that get paid if the MDP transitions from state $s$ to $s'$. These contracts are positive payments by the principal and are thus added to the lower-level objective and substracted from the upper-level objective.
    \begin{equation*}
    \label{eq:dynamic_contract_design}
    \begin{aligned}
        \max_x &\EE_{\xi,\pi_{x,\xi}^{*}}\left[\sum_{t=0}^\infty \gamma^t \left({r}(s_t, s_{t+1})-x(s_t,s_{t+1})\right)\right]\ \\
        &\text{s.t.} \ \pi_{x,\xi}^{*}(\cdot) = \argmax_\pi \EE_\pi \left[ \sum_{t=0}^\infty \gamma^t \Big(x(s_t, s_{t+1})-c_{\xi}(s_t,a_t)\Big) \right]. \\
    \end{aligned}
    \end{equation*}

\noindent\textbf{Dynamic Mechanism Design} considers the problem of a mechanism designer controlling an MDP for a group of $n$ bidders, who get a reward based on the observed trajectories~\cite{Curry2024Automated}. The context $\xi$ parameterizes the bidders' reward functions $r_{i,\xi}$, which they report to the mechanism designer. The latter wants to learn a policy for the MDP and charge payments to the bidders, to ensure eliciting truthful reward reports and also maximizize an objective $\mathcal{L}$, e.g. the total sum of payments. 
In this setting, \cite{Curry2024Automated} propose to search for such a mechanism within the class of affine maximizers, as they guarantee truthful reports by all bidders. In these mechanisms, a set of agent-dependent weights $x_{w,i}$ and state-action dependent boosts $x_{b}$ is chosen by the mechanism designer, then a policy $\pi$ is learned to maximize the corresponding affinely transformed social welfare $\mathbb{E}_{s_t, a_t \sim \pi}\left[\sum_{t=0}^T  \left(\sum_{i=1}^n x_{w,i} r_{i,\xi}(s_t, a_t)\right)+x_b(s_t, a_t) \right]$ and bidders are charged for the learned policy depending on their reported reward functions. Searching for the optimal mechanism parameters $x_{w,i}$ and $x_{b}$ to maximize $\mathcal{L}$ in expectation over $\xi$, subject to the constraint that the mechanism's policy maximizes affine social welfare can be formulated as \sprob. In this case $x_{w,i}$  and $x_{b}$ are the decision variable, the context parameterizes the bidders' preferences and the affinely transformed social welfare at each time step is the reward function of the lower-level MDP, as shown below:
\begin{align*}
      \min_{x_w,x_b}  &\mathbb{E}_{\xi}[\mathcal{L}\left(\pi^*_{\xi,x_w,x_b},x_w,x_b\right)] \\
      \quad &\text { s.t. } 
  \pi^*_{\xi,x_w,x_b} = \arg \max _\pi \mathbb{E}_{s_t, a_t \sim \pi}\left[\sum_{t=0}^T  \left(\sum_{i=1}^n x_{w,i} r_{i,\xi}(s_t, a_t)\right)+x_b(s_t, a_t) \right].
\end{align*}
\looseness=-1
Note, that all previous works in these application areas have either focused on the setting with a single representative follower~\cite{ben2023principal,chen2022adaptive,Ivanov2024Principal,Wu2024Contractual} or presented a problem-specific algorithm that cannot capture our \sprob~framework in its full generality~\cite{ben2023principal,Curry2024Automated}.

The \sprob~framework is also related to \textbf{Meta reinforcement learning (Meta RL)}, that aims to leverage the similarity of several RL tasks to learn common knowledge and use it on new unseen tasks \citep{beck2023survey}. One way to formulate Meta RL problems is to find a common regularization policy $\Tilde \pi$ for multiple tasks.
\begin{equation*}
    \begin{aligned}
        \max_{\Tilde{\pi}} \EE_{\xi} \left[
            \sum_{t=0}^\infty \gamma^t r_\xi(s_t, \pi^*_{\Tilde{\pi}, \xi}(s_t))
        \right] \mathrm{s.t.}\  \pi^*_{\Tilde{\pi}, \xi}(\cdot) = \argmax_{\pi} \left[
            \sum_{t=0}^\infty \gamma^t r_\xi(s_t, \pi(s_t)) 
            - \frac{\lambda}{2} KL(\pi(s_t) || \Tilde{\pi}(s_t))
        \right],
    \end{aligned}
\end{equation*}
where $\xi$ represents the distribution of multiple RL tasks and $r_\xi$ is the reward for the task indexed by $\xi$. Although the upper-level regularizer is different from entropy-regularization it still results in a unique softmax policy of the form $\pi_{s_{\xi}}^*(s, a) \propto \exp\left(\frac{Q_{s_{\xi}, \pi}(s, a)}{\lambda} + \log(\tilde{\pi}(s, a))\right)$ \cite{Vieillard2020Leverage}. Moreover, in Meta RL the leader does not change the transitions or rewards, but the target policy $\tilde{\pi}$, that goes into the KL regularization term of the followers.

\section{\algo\ Algorithm for \sprob} \label{sec:algorithm}
In this section, we derive a simple expression for the hypergradient of \sprob. We present \salgo\ and prove non-asymptotic convergence. \salgo\ can be combined with a large class of lower-level MDP solvers satisfying a mild inexact oracle assumption. We show this is the case for several popular RL algorithms. Furthermore, we present results for two important special cases of our problem: (1) when the upper-level objective decomposes as a discounted sum of rewards over the lower-level trajectories, and  (2) when the leader can direct the lower-level algorithm. We defer all proofs to \Cref{app:proofs}.
and make the following standard assumptions on how $x$ and $\xi$ influence the setup of the CMDP.\looseness=-1 

\begin{assumption}
\label{assum:theory}
We assume the following conditions:
    \begin{itemize} 
        \item $f$ is $L_f$-Lipschitz continuous and $S_f$-smooth in $x$ and $\pi$, uniformly for all $\xi$, i.e. 
           \begin{align*}
   \norm{ f(x_1, \pi_1, \xi) - f(x_2, \pi_2, \xi) }_\infty &\leq L_f \left( \| x_1 - x_2 \|_\infty + \| \pi_1 - \pi_2 \|_\infty \right)\\
   \|\partial_x f(x_1, \pi_1, \xi) - \partial_x f(x_2, \pi_2, \xi) \|_\infty &\leq S_f \left( \| x_1 - x_2 \|_\infty + \| \pi_1 - \pi_2 \|_\infty \right)\\
   \|\partial_\pi f(x_1, \pi_1, \xi) - \partial_\pi f(x_2, \pi_2, \xi) \|_\infty &\leq S_f \left( \| x_1 - x_2 \|_\infty + \| \pi_1 - \pi_2 \|_\infty \right)
   \end{align*}
        \item $\forall x,\xi : |r_{x,\xi}(s,a)|<\overline{R}$, $\norm{\partial_x \log P_{x,\xi}(s';s,a)}_\infty<K_1$, $\norm{\partial_x r_{x,\xi}(s,a)}_\infty<K_2$.
    \end{itemize}
\end{assumption}
\subsection{Hypergradient derivation}
\label{sec:hpg}

The leader's loss $f$ depends on $x$ directly and indirectly through $\pi^*_{x,\xi}$. Therefore, the derivative of $f$ with respect to $x$ is commonly referred to as the \textit{hypergradient} to highlight this nested dependency. It is possible to obtain a closed-form expression of the hypergradient, using the implicit function theorem~\citep{ghadimi2018approximation}. However, this involves computing and inverting the Hessian of the follower's value function, which can be computationally expensive and unstable~\cite {Fiez2020Implicit,Liu2022Inducing}. Instead, we leverage the fact that $\pi^*_{x,\xi}$ is a softmax function to explicitly compute its derivative with respect to $x$, which is given by $\frac{d \pi^*_{x,\xi}(s,a)}{d x}= \frac{1}{\lambda} \pi^*_{x,\xi}(a;s) \partial_x {A^{\pi^*_{x,\xi}}_{\lambda,x,\xi}(s,a)}$~\cite{chen2022adaptive}, where $\partial_x{A}=(\partial_{x_1}A,\dots,\partial_{x_d}A)$. Applying the Dominated Convergence Theorem to switch derivative and expectation, we arrive at \Cref{thm:Fgrad}.
\begin{theorem}
    \label{thm:Fgrad} Under \Cref{assum:theory}, $F$ is differentiable and the hypergradient is given by
    \begin{equation} 
    \label{eq:leadergrad}
        \frac{d F(x)}{dx}= \EE_{\xi}\left[\frac{\partial_1 f(x, \pi^*_{x,\xi},\xi)}{\partial x} + \EE_{s\sim \nu, a\sim \pi^*_{x,\xi}} \left[ \frac{1}{\lambda \nu(s)} \frac{\partial_2 f(x, \pi^*_{x,\xi},\xi)}{\partial \pi^*_{x,\xi}(a;s)}   {\partial_x{A^{\pi^*_{x,\xi}}_{\lambda,x,\xi}(s,a)}}\right] \right],
    \end{equation}
    where $\nu$ is any sampling distribution with full support on the state space $\cS$.
\end{theorem}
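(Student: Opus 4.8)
The plan is to compute $dF(x)/dx$ directly by differentiating under the expectation sign and then applying the chain rule through the lower-level optimal policy $\pi^*_{x,\xi}$. First I would justify exchanging differentiation and expectation: by Assumption~\ref{assum:theory}, $f$ is $L_f$-Lipschitz in $(x,\pi)$ uniformly in $\xi$, and the map $x \mapsto \pi^*_{x,\xi}$ is Lipschitz (this follows from the softmax representation $\pi^*_{x,\xi}(a;s) \propto \exp(Q^*_{\lambda,x,\xi}(s,a)/\lambda)$ together with the bounds $|r_{x,\xi}|<\overline R$, $\|\partial_x \log P_{x,\xi}\|_\infty < K_1$, $\|\partial_x r_{x,\xi}\|_\infty < K_2$, which give a uniform bound on $\|\partial_x Q^*_{\lambda,x,\xi}\|_\infty$ via the Bellman fixed-point equation). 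Hence $x \mapsto f(x,\pi^*_{x,\xi},\xi)$ has a gradient bounded uniformly in $\xi$, so the Dominated Convergence Theorem lets me write $dF/dx = \EE_\xi[ d f(x,\pi^*_{x,\xi},\xi)/dx ]$.

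Next I would expand the total derivative inside the expectation by the chain rule:
\begin{equation*}
\frac{d f(x,\pi^*_{x,\xi},\xi)}{dx} = \frac{\partial_1 f(x,\pi^*_{x,\xi},\xi)}{\partial x} + \sum_{s,a} \frac{\partial_2 f(x,\pi^*_{x,\xi},\xi)}{\partial \pi^*_{x,\xi}(a;s)} \cdot \frac{d\pi^*_{x,\xi}(a;s)}{dx}.
\end{equation*}
Then I substitute the known derivative of the softmax policy, $\frac{d\pi^*_{x,\xi}(a;s)}{dx} = \frac{1}{\lambda}\pi^*_{x,\xi}(a;s)\,\partial_x A^{\pi^*_{x,\xi}}_{\lambda,x,\xi}(s,a)$, cited from \citep{chen2022adaptive}; if a self-contained derivation is wanted, it comes from differentiating $\log \pi^*_{x,\xi}(a;s) = Q^*_{\lambda,x,\xi}(s,a)/\lambda - \log\sum_{a'}\exp(Q^*_{\lambda,x,\xi}(s,a')/\lambda)$ and recognizing the normalization term's derivative as $\frac{1}{\lambda}\sum_{a'}\pi^*(a';s)\partial_x Q^*(s,a')$, so that the bracket becomes $\partial_x Q^* - \sum_{a'}\pi^* \partial_x Q^* = \partial_x A^*$ by the advantage definition~\eqref{eq:adv_def}. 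This yields
\begin{equation*}
\frac{d f}{dx} = \frac{\partial_1 f}{\partial x} + \frac{1}{\lambda}\sum_{s,a} \pi^*_{x,\xi}(a;s)\,\frac{\partial_2 f}{\partial \pi^*_{x,\xi}(a;s)}\,\partial_x A^{\pi^*_{x,\xi}}_{\lambda,x,\xi}(s,a).
\end{equation*}

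Finally I would rewrite the sum over $(s,a)$ as a nested expectation. The inner sum over $a$, weighted by $\pi^*_{x,\xi}(a;s)$, is exactly $\EE_{a\sim\pi^*_{x,\xi}(\cdot;s)}[\cdot]$. For the sum over $s$, I introduce an arbitrary sampling distribution $\nu$ with full support on $\cS$ and multiply and divide by $\nu(s)$, turning $\sum_s (\cdot)$ into $\EE_{s\sim\nu}[\frac{1}{\nu(s)}(\cdot)]$; full support guarantees no division by zero and leaves the value unchanged. Combining these gives precisely \eqref{eq:leadergrad}. The main obstacle is the rigorous justification of the first step — the uniform-in-$\xi$ bound on $\|d f(x,\pi^*_{x,\xi},\xi)/dx\|$ — which requires controlling $\|\partial_x A^{\pi^*_{x,\xi}}_{\lambda,x,\xi}\|_\infty$; this needs a careful argument showing that $Q^*_{\lambda,x,\xi}$ is differentiable in $x$ with a bound depending only on $\overline R, K_1, K_2, \gamma, \lambda$ and $|\cA|$ (e.g. via the contraction property of the soft Bellman operator and implicit differentiation of its fixed point), after which the remaining manipulations are routine.
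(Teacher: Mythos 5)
Your proposal matches the paper's proof essentially step for step: justify exchanging derivative and expectation via dominated convergence using a uniform bound on $\partial_x Q^{\pi^*_{x,\xi}}_{\lambda,x,\xi}$ (the paper gets this bound from its Theorem~\ref{prop:policygradient} expansion rather than implicit differentiation of the Bellman fixed point, but the constants are the same), then apply the chain rule with the softmax-policy derivative $\frac{d\pi^*_{x,\xi}(a;s)}{dx}=\frac{1}{\lambda}\pi^*_{x,\xi}(a;s)\partial_x A^{\pi^*_{x,\xi}}_{\lambda,x,\xi}(s,a)$ (the paper's Proposition~\ref{prop:policy_gradient_configuration}), and finally rewrite the sum over $(s,a)$ as a nested expectation via importance sampling with an arbitrary full-support $\nu$. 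The argument is correct and takes essentially the same route as the paper.
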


The first term captures the direct influence of $x$ on $f$, and the second the indirect influence through $\pi^*_{x,\xi}$. For now we assume the leader knows $\partial_1 f(\cdot,\pi,\xi)$ and $\partial_2 f(x,\cdot,\xi)$. It remains to compute ${\partial_x{A^{\pi^*_{x,\xi}}_{\lambda,x,\xi}(s,a)}}$, i.e. the partial derivative with respect to $x$ evaluated for a given policy. For this, cf. \eqref{eq:adv_def}, we need ${\partial_x{Q^{\pi^*_{x,\xi}}_{\lambda,x,\xi}(s,a)}}$. We derive an expression for the latter in  \Cref{prop:policygradient}. The proof adapts the analysis of the policy gradient theorem to account for the dependence of $P_{x,\xi},\mu_{x,\xi}$ and $r_{x,\xi}$ on $x$.
\begin{theorem}
\label{prop:policygradient} For given $\pi,x,\xi$, it holds that:
     \begin{align*}
        \partial_x{Q^{\pi}_{\lambda,x,\xi}(s_0,a_0)}      &= \EE_{s,a}^\pi \left[ \sum_{t=0}^\infty  \gamma^t \frac{d r_{x,\xi}(s_t,a_t)}{dx} + \gamma^{t+1} \frac{d \log P_{x,\xi}(s_{t+1};s_t,a_t)}{dx}V^{\pi}_{\lambda,x,\xi}(s_{t+1}) \right].
    \end{align*}
\end{theorem}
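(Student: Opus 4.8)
The plan is to differentiate the one-step Bellman equations for $Q^\pi_{\lambda,x,\xi}$ and $V^\pi_{\lambda,x,\xi}$ with respect to $x$ while keeping the policy $\pi$ fixed, rewrite the outcome as a Bellman-type fixed-point equation for $\partial_x Q^\pi_{\lambda,x,\xi}$, and then unroll it exactly as in the proof of the policy gradient theorem. Two structural facts make this clean. First, $Q^\pi_{\lambda,x,\xi}$ and $V^\pi_{\lambda,x,\xi}$ do not depend on the initial distribution $\mu_{x,\xi}$ (only $J_{\lambda,x,\xi}$ does), so despite $\mu_{x,\xi}$ being $x$-dependent, only $r_{x,\xi}$ and $P_{x,\xi}$ contribute. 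Second, the entropy bonus $\lambda H(\pi;\cdot)$ depends on $\pi$ and the state only, hence is $x$-independent, which is why no entropy term survives in the final formula. Combining \eqref{eq:mdp_problem} and \eqref{eq:adv_def} by peeling off one step yields the coupled equations $Q^\pi_{\lambda,x,\xi}(s,a) = r_{x,\xi}(s,a) + \gamma\,\EE_{s'\sim P_{x,\xi}(\cdot;s,a)}[V^\pi_{\lambda,x,\xi}(s')]$ and $V^\pi_{\lambda,x,\xi}(s) = \lambda H(\pi;s) + \EE_{a\sim\pi(\cdot;s)}[Q^\pi_{\lambda,x,\xi}(s,a)]$, which will be the starting point.

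I would first record that $x\mapsto Q^\pi_{\lambda,x,\xi}$ is differentiable: it is the unique fixed point of a $\gamma$-contraction whose data $r_{x,\xi},P_{x,\xi}$ are $C^1$ in $x$ with derivatives bounded uniformly by \Cref{assum:theory}, so the implicit function theorem applies (in the tabular case this is even more immediate from $Q^\pi_{\lambda,x,\xi}=(I-\gamma P^\pi_{x,\xi})^{-1}r^\pi_{x,\xi}$; alternatively one may differentiate under the expectation via dominated convergence as in the proof of \Cref{thm:Fgrad}). Differentiating the second Bellman equation then gives $\partial_x V^\pi_{\lambda,x,\xi}(s)=\EE_{a\sim\pi(\cdot;s)}[\partial_x Q^\pi_{\lambda,x,\xi}(s,a)]$. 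Differentiating the first equation, applying the log-derivative identity $\partial_x P_{x,\xi}(s';s,a)=P_{x,\xi}(s';s,a)\,\partial_x\log P_{x,\xi}(s';s,a)$, and substituting the previous line yields
\begin{equation*}
\partial_x Q^\pi_{\lambda,x,\xi}(s,a)= g_{x,\xi}(s,a) + \gamma\,\EE_{\substack{s'\sim P_{x,\xi}(\cdot;s,a)\\ a'\sim\pi(\cdot;s')}}\!\left[\partial_x Q^\pi_{\lambda,x,\xi}(s',a')\right],\qquad g_{x,\xi}(s,a):= \frac{d r_{x,\xi}(s,a)}{dx} + \gamma\,\EE_{s'\sim P_{x,\xi}(\cdot;s,a)}\!\left[\frac{d \log P_{x,\xi}(s';s,a)}{dx}\,V^\pi_{\lambda,x,\xi}(s')\right],
\end{equation*}
where for a fixed pair $(s,a)$ the total and partial $x$-derivatives coincide. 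This is precisely the Bellman equation for the function $(s,a)\mapsto\partial_x Q^\pi_{\lambda,x,\xi}(s,a)$ with per-step reward $g_{x,\xi}$, transitions $P_{x,\xi}$, and policy $\pi$.

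Under \Cref{assum:theory}, $V^\pi_{\lambda,x,\xi}$ is bounded (by $(\overline{R}+\lambda\log|\cA|)/(1-\gamma)$) and $\|\partial_x\log P_{x,\xi}\|_\infty<K_1$, $\|\partial_x r_{x,\xi}\|_\infty<K_2$, so $g_{x,\xi}$ is bounded; since the operator above is a $\gamma$-contraction in $\|\cdot\|_\infty$ and $\partial_x Q^\pi_{\lambda,x,\xi}$ is bounded (being the derivative of the uniformly Lipschitz-in-$x$ map $Q^\pi_{\lambda,x,\xi}$), uniqueness of the bounded fixed point identifies it with the Neumann series $\partial_x Q^\pi_{\lambda,x,\xi}(s,a)=\EE^\pi_{s,a}\!\left[\sum_{t=0}^\infty\gamma^t g_{x,\xi}(s_t,a_t)\right]$, the expectation being over trajectories with $(s_0,a_0)=(s,a)$, $a_t\sim\pi(\cdot;s_t)$, $s_{t+1}\sim P_{x,\xi}(\cdot;s_t,a_t)$. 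Finally, conditioned on $(s_t,a_t)$ the law of $s_{t+1}$ is exactly $P_{x,\xi}(\cdot;s_t,a_t)$, so the inner expectation inside $g_{x,\xi}(s_t,a_t)$ can be folded into the trajectory expectation by the tower property, giving $\EE^\pi_{s,a}\!\left[\sum_t \gamma^t\big(\tfrac{d r_{x,\xi}(s_t,a_t)}{dx}+\gamma\,\tfrac{d\log P_{x,\xi}(s_{t+1};s_t,a_t)}{dx}\,V^\pi_{\lambda,x,\xi}(s_{t+1})\big)\right]$, the claimed identity. The only genuinely delicate point is the first step — justifying differentiability of $Q^\pi_{\lambda,x,\xi}$ and the interchange of $\partial_x$ with the infinite unrolling — and this is handled by the contraction/IFT argument together with the uniform bounds of \Cref{assum:theory} and $\gamma<1$; the remainder is standard policy-gradient bookkeeping, adapted to differentiate through $r_{x,\xi}$ and $P_{x,\xi}$ rather than through $\pi$.
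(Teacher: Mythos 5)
Your proof is correct and rests on the same core computation as the paper's: differentiate the one-step Bellman equations for $Q^{\pi}_{\lambda,x,\xi}$ and $V^{\pi}_{\lambda,x,\xi}$ in $x$ with $\pi$ held fixed (so the entropy bonus drops out), use the log-derivative identity for $\partial_x P_{x,\xi}$, and unroll along trajectories. The packaging differs in a way worth noting. The paper proves by induction the finite-horizon identity carrying an explicit remainder $\gamma^{n+1}\sum_{\tilde s,\tilde a}\,p(s,a\to\tilde s,\tilde a;\,n+1,\pi)\,\partial_x Q^{\pi}_{\lambda,x,\xi}(\tilde s,\tilde a)$ and then lets $n\to\infty$, \emph{assuming} outright that $Q^{\pi}_{\lambda,x,\xi}$ is differentiable; you instead recast the differentiated Bellman equation as a fixed-point equation with per-step ``reward'' $g_{x,\xi}$ and identify $\partial_x Q^{\pi}_{\lambda,x,\xi}$ with the Neumann series through uniqueness of the bounded fixed point of a $\gamma$-contraction, which is the induction-plus-limit argument in disguise (the partial Neumann sums are exactly the paper's induction hypothesis). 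What your route buys is that the two points the paper leaves implicit — differentiability of $x\mapsto Q^{\pi}_{\lambda,x,\xi}$ and the legitimacy of the infinite unrolling — are actually justified, via the parametric-contraction/IFT argument and the uniform bounds $\overline{R},K_1,K_2$ from \Cref{assum:theory}; your final tower-property step folding $\EE_{s'\sim P_{x,\xi}}$ into the trajectory expectation is precisely the paper's remark that its summation form is equivalent to the stated expectation form. One harmless slip: in the tabular aside, the resolvent formula should absorb the entropy bonus into the effective reward, e.g.\ $Q^{\pi}=(I-\gamma P\Pi^{\pi})^{-1}\bigl(r+\gamma\lambda P H^{\pi}\bigr)$, rather than $(I-\gamma P^{\pi})^{-1}r^{\pi}$ with $r^{\pi}$ the bare reward; this does not affect the main argument.
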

Note, \Cref{thm:Fgrad,prop:policygradient} generalize existing results in model design for MDPs to CMDPs~\cite{chen2022adaptive,zhang2018learning}.\looseness=-1

\subsection{\salgo\ Algorithm and Convergence Analysis}
\label{sec:oracle}
Computing the exact hypergradient is computationally expensive and thus infeasible in larger settings. Instead, to minimize $F(x)$, one would ideally sample unbiased estimates of the hypergradient in \Cref{eq:leadergrad} and run stochastic gradient descent (SGD). However, the leader does not have access to $\pi^*_{x,\xi}$ and generally no control over the training procedure of the lower level. Instead, we assume the follower adapts any preferred algorithm to solve the MDP up to a certain precision $\delta$ and the leader can observe trajectories from the follower's policy. Such a setting is well-motivated by economic applications. \looseness=-1
\begin{assumption}
\label{assumption:algorithm}
For any $\cM_{x,\xi}$, the leader has access to an oracle $o$, which returns trajectories sampled from a policy $\pi^{o}_{x,\xi}$ such that $\forall x,\forall\xi:\EE_{o}\left[\norm{\pi^*_{x,\xi}-\pi^o_{x,\xi}}^2_{\infty}\right]\leq \delta^2$. %
\end{assumption}
We will show that \Cref{assumption:algorithm} is relatively mild and holds for a variety of RL algorithms. Given access to trajectories generated by $\pi^o_{x,\xi}$, the leader can construct an estimator of $\partial_x A^{\pi^o_{x,\xi}}_{\lambda,x,\xi}(s,a)$ by rolling out $\pi^o_{x,\xi}$ for $T$ steps, where $T\sim \text{Geo}(1-\gamma)$. We defer the construction (\Cref{alg:sample_gradient_estimation}) and proof of unbiasedness (\Cref{prop:unbiasedgradient}) to the Appendix.  Using this estimator, we introduce \salgo\ in \Cref{alg:oracleSGD}. As $F$ is generally nonconvex due to the bilevel structure~\citep{ghadimi2018approximation}, we demonstrate non-asymptotic convergence to a stationary point of $F$, which matches the lower bound for solving stochastic smooth nonconvex  optimization~\citep{arjevani2023lower}.
\setlength{\textfloatsep}{7pt}
\begin{algorithm}[!t]
\caption{\algo~(\salgo)}
\label{alg:oracleSGD}
\begin{algorithmic}[H]
\STATE \textbf{Input:} Iterations $T$, Learning rate $\alpha$, Regularization $\lambda$, Trajectory oracle $o$, Initial point $x_0$
\FOR{$t = 0$ to $T-1$}
    \STATE $\xi\sim \mathbb{P}_\xi$, $s \sim \nu$ and $a \sim \pi^o_{x,\xi}(\cdot;s)$
    \STATE $\widehat{\partial_x A^{\pi^o_{x,\xi}}_{\lambda,x,\xi}}(s,a) \gets \texttt{GradientEstimator}(\xi,x_t,s,a,o)$  (\Cref{alg:sample_gradient_estimation})
    \STATE $\widehat{\frac{dF}{dx}} \gets \frac{\partial_1 f(x_t, \pi^o_{x_t,\xi},\xi)}{\partial x}  + \frac{1}{\lambda \nu(s)} \frac{\partial_2 f(x_t, \pi^o_{x_t,\xi},\xi))}{\partial \pi(s,a)} \widehat{\partial_x A^{\pi^o_{x,\xi}}_{\lambda,x,\xi}}(s,a)$
    \STATE $x_{t+1}\gets x_t-\alpha \widehat{\frac{dF}{dx}}$
\ENDFOR
\STATE \textbf{Output:} $\hat{x}_T \sim \textrm{Uniform}(\{x_0,\dots,x_{T-1}\})$
\end{algorithmic}
\end{algorithm}
\begin{theorem}
   \label{thm:convergence}
   Using HPGD, under Assumptions \ref{assum:theory} and \ref{assumption:algorithm}, for $\alpha\leq 1/(2S_f)$, we have the following:
  \begin{equation}
  \label{eq:convtheorem}
      \EE\norm{{\frac{dF(\hat{x}_T)}{dx}}}^2 = \cO \Big(\frac{1}{\alpha T}+\delta+\alpha \Big).
  \end{equation}
   For $\alpha =\cO(1/\sqrt{T})$ and $\delta=\cO(1/\sqrt{T})$, \salgo\ converges to a stationary point at rate $\cO(1/\sqrt{T})$.
\end{theorem}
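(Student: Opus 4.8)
The plan is to run the standard descent-lemma analysis for stochastic gradient descent on a smooth nonconvex objective, but with a \emph{biased} gradient estimator whose bias is controlled by the oracle accuracy $\delta$. First I would establish that $F$ is $L_F$-smooth for some constant $L_F$ depending on the constants in \Cref{assum:theory} (and on $\lambda,\gamma$); this follows by differentiating the expression in \Cref{thm:Fgrad} and using the smoothness/Lipschitz bounds on $f$, together with the fact that $\pi^*_{x,\xi}$ is a softmax of the optimal $Q$-function and hence Lipschitz and smooth in $x$ (the $Q$-derivatives are bounded via \Cref{prop:policygradient} using $|r_{x,\xi}|<\overline R$, $\|\partial_x\log P_{x,\xi}\|_\infty<K_1$, $\|\partial_x r_{x,\xi}\|_\infty<K_2$, and the geometric series $\sum_t\gamma^t$). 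Next I would decompose the one-step update error. Write $g_t := \widehat{dF/dx}$ for the estimator computed at $x_t$ and split $g_t = \nabla F(x_t) + b_t + n_t$, where $b_t := \EE[g_t \mid x_t] - \nabla F(x_t)$ is the bias and $n_t$ is mean-zero noise. The key lemma is that $\|b_t\| = \cO(\delta)$: this is where \Cref{assumption:algorithm} enters, since replacing $\pi^*_{x,\xi}$ by $\pi^o_{x,\xi}$ in every place it appears in \eqref{eq:leadergrad} — inside $\partial_2 f$, inside the action sampling $a\sim\pi^o$, and inside $\partial_x A^{\pi^o}_{\lambda,x,\xi}$ (whose unbiased estimator is \Cref{alg:sample_gradient_estimation}, cf.\ \Cref{prop:unbiasedgradient}) — perturbs the integrand by $\cO(\|\pi^*_{x,\xi}-\pi^o_{x,\xi}\|_\infty)$ using Lipschitzness of $f$ and of the advantage/value functions in $\pi$, and then $\EE_o\|\pi^*_{x,\xi}-\pi^o_{x,\xi}\|_\infty \le \sqrt{\EE_o\|\pi^*_{x,\xi}-\pi^o_{x,\xi}\|_\infty^2}\le\delta$ by Jensen. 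I would also need a second-moment bound $\EE\|g_t\|^2 \le \sigma^2$ for a constant $\sigma^2$ independent of $t$, which again follows from boundedness of all the quantities in the estimator (the geometric rollout length $T\sim\mathrm{Geo}(1-\gamma)$ has all moments finite, and $1/(\lambda\nu(s))$ is bounded since $\nu$ has full support — here I'd assume $\nu$ is fixed with $\min_s\nu(s)>0$, or absorb it into constants).

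With these ingredients the analysis is routine: apply the descent lemma,
\begin{equation*}
F(x_{t+1}) \le F(x_t) - \alpha\langle \nabla F(x_t), g_t\rangle + \tfrac{L_F\alpha^2}{2}\|g_t\|^2,
\end{equation*}
take conditional expectation, use $\EE[\langle\nabla F(x_t),g_t\rangle\mid x_t] = \|\nabla F(x_t)\|^2 + \langle\nabla F(x_t),b_t\rangle \ge \|\nabla F(x_t)\|^2 - \tfrac12\|\nabla F(x_t)\|^2 - \tfrac12\|b_t\|^2$ (Young), and $\EE\|g_t\|^2\le\sigma^2$, to obtain
\begin{equation*}
\EE[F(x_{t+1})] \le \EE[F(x_t)] - \tfrac{\alpha}{2}\EE\|\nabla F(x_t)\|^2 + \tfrac{\alpha}{2}\cO(\delta^2) + \tfrac{L_F\alpha^2\sigma^2}{2}.
\end{equation*}
Summing over $t=0,\dots,T-1$, telescoping, dividing by $\alpha T/2$, and using $F(x_T)\ge \inf F =: F^*$ gives
\begin{equation*}
\frac{1}{T}\sum_{t=0}^{T-1}\EE\|\nabla F(x_t)\|^2 \le \frac{2(F(x_0)-F^*)}{\alpha T} + \cO(\delta^2) + L_F\alpha\sigma^2 = \cO\!\left(\frac{1}{\alpha T} + \delta + \alpha\right),
\end{equation*}
where I bounded $\cO(\delta^2)\le\cO(\delta)$ since $\delta\le 1$ WLOG (or keep $\delta^2$ — the stated bound uses $\delta$). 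Since $\hat x_T$ is uniform over $\{x_0,\dots,x_{T-1}\}$, the left side equals $\EE\|\nabla F(\hat x_T)\|^2$, which is \eqref{eq:convtheorem}. Finally, substituting $\alpha = \cO(1/\sqrt T)$ and $\delta = \cO(1/\sqrt T)$ makes all three terms $\cO(1/\sqrt T)$.

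The main obstacle is the bias lemma $\|b_t\|=\cO(\delta)$, and within it the fact that the perturbation $\pi^*\mapsto\pi^o$ must be propagated through \emph{three} different appearances — the explicit $\partial_2 f$ factor, the sampling distribution of $a$, and the advantage-gradient $\partial_x A^{\pi}_{\lambda,x,\xi}$ — each requiring its own Lipschitz estimate (the last one needs Lipschitzness of $\partial_x Q^\pi$ in $\pi$, obtained from \Cref{prop:policygradient} by bounding how $V^\pi_{\lambda,x,\xi}$ and the induced occupancy measure move with $\pi$, again via geometric-series arguments). A secondary technical point is verifying that the $\texttt{GradientEstimator}$ output has bounded second moment despite the random rollout horizon; this is clean because $T\sim\mathrm{Geo}(1-\gamma)$ contributes only constant factors depending on $\gamma$. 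Everything else is the textbook biased-SGD argument and I would not belabor it.
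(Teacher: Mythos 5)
Your proposal is correct and follows essentially the same route as the paper: a biased-SGD descent-lemma analysis in which the whole difficulty is concentrated in showing the hypergradient bias is $\cO(\delta)$ by propagating the perturbation $\pi^*_{x,\xi}\mapsto\pi^o_{x,\xi}$ through $\partial_2 f$, the action-sampling distribution, and $\partial_x A^{\pi}_{\lambda,x,\xi}$ (the last via a geometric-series recursion on $\|\partial_x Q^{\pi^*}-\partial_x Q^{\pi^o}\|_\infty$, which in turn needs the value-function and entropy differences controlled — the paper additionally uses a lower bound $l_1$ on the softmax policy entries, with $\delta\le l_1/2$, to make $\log\pi$ Lipschitz there), plus a bounded second moment from the geometric rollout lengths. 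The only cosmetic difference is that you absorb the bias via Young's inequality (yielding $\cO(\delta^2)\le\cO(\delta)$) whereas the paper invokes a ready-made lemma with a linear-in-bias term; both give the stated rate.
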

\noindent\textit{Proof sketch}.~~
Using the smoothness of $F$ and the fact that $\hat{x}_T$ is uniformly sampled from all iterates, we upper bound the left side of \eqref{eq:convtheorem} by the sum of three terms. The first is $|F(x_0)-\min_x F(x)|/(\alpha T)$. The second depends on the bias of our gradient estimate, which we show is linear in $\delta$. The last term depends on $\alpha$ times the variance of our estimator, which is bounded. \qed\looseness=-1

To the best of our knowledge, \Cref{thm:convergence} is the first result that shows convergence when using stochastic estimates for the hypergradient. When the hypergradient can be computed exactly, the last $\cO(\alpha)$ term vanishes and we recover the deterministic convergence rates of previous works~\cite{chen2022adaptive,chakraborty2023parl,shen2024principled}.

\looseness=-1 
Another major advantage of \salgo\ is that the follower can use any (possibly stochastic) algorithm satisfying \Cref{assumption:algorithm} to solve the lower-level MDP, while the leader only needs access to generated trajectories. 
While \Cref{assumption:algorithm} certainly holds if the follower solves the MDP exactly, for example with an LP-solver, we are interested in verifying it for common RL algorithms, which can scale to larger state and action spaces.
In \Cref{app:convergenceRL}, we prove non-asymptotic convergence to $\pi^*_{x,\xi}$ for Soft Value Iteration, which converges at rate $\mathcal{O}(\log 1/\delta)$ (\Cref{prop:valueiterconv}); Q-learning, which converges at rate of $\mathcal{O}(\log (1/\delta)/\delta^2)$ (\Cref{prop:qlearningconv}) and Natural Policy Gradient, which converges at rate of $\mathcal{O}(\log 1/\delta)$ (\Cref{prop:npgconv}). Additionaly, we show Vanilla Policy Gradient converges asymptotically in \Cref{prop:pggconv}.
All these Algorithms thus satisfy \Cref{assumption:algorithm}, which makes \salgo\ scalable and widely applicable to settings where followers might use a variety of model-free or model-based algorithms.

\subsection{Upper-Level Discounted Reward Objective}
\label{sec:linear_loss_function}
So far we assumed the leader knows $\partial_1 f(\cdot,\pi,\xi)$ and $\partial_2 f(x,\cdot,\xi)$. In this subsection, instead, we assume $f$ can be written as the negative expected sum of discounted rewards over the lower-level trajectories and show how to estimate the hypergradient from trajectory samples without explicit knowledge of $\partial_1 f(\cdot,\pi,\xi)$ and $\partial_2 f(x,\cdot,\xi)$. In many practical applications, such as reward shaping, or dynamic mechanism design~(cf. \Cref{sec:applications}), the loss $f$ satisfies:
\begin{equation}
\label{eq:fdecomp}
    f(x,\pi^*_{x;\xi}, \xi)=-\EE^{\pi^*_{x,\xi}}_{s_0\sim \mu} \Big[ \sum\nolimits_t \gamma^t \overline{r}_{x,\xi} (s_t,a_t) \Big].
\end{equation}
 Here $\overline{r}_{x,\xi}$ represents the reward of the leader, which is generally distinct from the follower's reward. The expectation is taken over trajectories induced by the lower-level $\pi^*_{x,\xi}$. In this case, the leader does not know the partial derivatives of $f$ but can still estimate the hypergradient from trajectory samples. The following proposition follows from a similar anlysis as the policy gradient theorem.
\begin{proposition}
    \label{prop:polgradlinear}
    If $f$ decomposes as in \Cref{eq:fdecomp}, then $\frac{d F(x)}{dx}$ can be expressed as follows: 
    \begin{equation}
    \label{eq:decomposedhypergrad}
        \begin{aligned}
        \frac{d F(x)}{dx} =  \EE_\xi\Bigg[\EE^{\pi^*_{x,\xi}}_{s_0\sim\mu_{x,\xi}} \Bigg[& \sum_{t=0}^\infty \gamma^t \Biggl( \frac{1}{{\lambda}}\partial_x A_{\lambda,x,\xi}^{\pi^*_{x,\xi}}(s_t,a_t) \overline{Q}_{x,\xi}(s_t,a_t)\\& +\frac{d\overline{r}_{x,\xi}(s_t,a_t)}{dx}  + \partial_x \log P_{x,\xi}(s_t;s_{t-1},a_{t-1}) \overline{V}_{x,\xi}(s_t)\Biggr) \Bigg] \Bigg],
        \end{aligned}
        \end{equation}
    where for compactness, we slightly abuse notation to express $\mu_{x,\xi}(s_0)$ as $P_{x,\xi}(s_0,a_{-1},s_{-1})$.
\end{proposition}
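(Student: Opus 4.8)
\emph{Proof proposal.}~~The plan is to recognize that $f(x,\pi^*_{x,\xi},\xi)$ in \eqref{eq:fdecomp} is, up to an overall sign, the value of an auxiliary ``leader MDP'' that shares the transition kernel $P_{x,\xi}$ and initial distribution $\mu_{x,\xi}$ of $\cM_{x,\xi}$ but carries reward $\overline{r}_{x,\xi}$, evaluated at the \emph{follower's} policy $\pi^*_{x,\xi}$. Writing $\overline{V}_{x,\xi}$ and $\overline{Q}_{x,\xi}$ for the associated (policy-$\pi^*_{x,\xi}$) value and $Q$ functions, so that $f(x,\pi^*_{x,\xi},\xi) = -\EE_{s_0\sim\mu_{x,\xi}}\big[\overline{V}_{x,\xi}(s_0)\big]$, I would first invoke the Dominated Convergence Theorem exactly as in the proof of \Cref{thm:Fgrad} (using the boundedness and Lipschitz conditions of \Cref{assum:theory} together with the analogous regularity of $\overline{r}_{x,\xi}$) to move $d/dx$ inside $\EE_\xi$, so that it suffices to compute $\tfrac{d}{dx}f(x,\pi^*_{x,\xi},\xi)$ for each fixed $\xi$.

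Next I would split this total derivative, by the chain rule, into the contribution of the explicit $x$-dependence of the leader MDP (through $\overline{r}_{x,\xi}$, $P_{x,\xi}$, $\mu_{x,\xi}$, with $\pi^*_{x,\xi}$ held fixed) and the contribution of the $x$-dependence of $\pi^*_{x,\xi}$ (with the leader MDP held fixed), i.e. the $\partial_2 f(x,\pi^*_{x,\xi},\xi)\cdot\tfrac{d\pi^*_{x,\xi}}{dx}$ term. For the policy term I would apply the (stochastic) policy gradient theorem, yielding $\EE^{\pi^*_{x,\xi}}_{s_0\sim\mu_{x,\xi}}\big[\sum_t\gamma^t\,\overline{Q}_{x,\xi}(s_t,a_t)\,\partial_x\log\pi^*_{x,\xi}(a_t;s_t)\big]$, and then substitute the softmax identity $\partial_x\log\pi^*_{x,\xi}(a;s) = \tfrac1\lambda\,\partial_x A^{\pi^*_{x,\xi}}_{\lambda,x,\xi}(s,a)$ (the identity used before \Cref{thm:Fgrad}, following \cite{chen2022adaptive}) to recover the $\tfrac1\lambda\,\partial_x A^{\pi^*_{x,\xi}}_{\lambda,x,\xi}(s_t,a_t)\,\overline{Q}_{x,\xi}(s_t,a_t)$ term of \eqref{eq:decomposedhypergrad}. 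For the model term I would repeat the argument of \Cref{prop:policygradient} almost verbatim: unroll the fixed-policy Bellman equation $\overline{V}_{x,\xi}(s) = \sum_a\pi^*_{x,\xi}(a;s)\big(\overline{r}_{x,\xi}(s,a)+\gamma\sum_{s'}P_{x,\xi}(s';s,a)\overline{V}_{x,\xi}(s')\big)$, differentiate term by term using $\partial_x P_{x,\xi} = P_{x,\xi}\,\partial_x\log P_{x,\xi}$, and telescope, which produces $\EE^{\pi^*_{x,\xi}}_{s_0\sim\mu_{x,\xi}}\big[\sum_t\gamma^t\big(\tfrac{d\overline{r}_{x,\xi}(s_t,a_t)}{dx}+\gamma\,\partial_x\log P_{x,\xi}(s_{t+1};s_t,a_t)\,\overline{V}_{x,\xi}(s_{t+1})\big)\big]$ plus a boundary term $\EE_{s_0\sim\mu_{x,\xi}}\big[\partial_x\log\mu_{x,\xi}(s_0)\,\overline{V}_{x,\xi}(s_0)\big]$ from differentiating the initial distribution.

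Finally I would reassemble the pieces: re-indexing the transition term $t\mapsto t-1$ lines up the factor $\partial_x\log P_{x,\xi}$ for the transition \emph{into} $s_t$ with the discount $\gamma^t$, and the initial-distribution boundary term is exactly its $t=0$ instance under the stated convention $\mu_{x,\xi}(s_0)=P_{x,\xi}(s_0;s_{-1},a_{-1})$; collecting all terms (with the overall sign of $f=-\EE[\overline{V}_{x,\xi}]$ absorbed into the definitions of $\overline{V}_{x,\xi}$ and $\overline{Q}_{x,\xi}$) yields \eqref{eq:decomposedhypergrad}. I expect the main obstacle to be the same analytic point underlying \Cref{prop:policygradient}: rigorously justifying the interchange of differentiation with the infinite-horizon sum and with the expectation over trajectories, and the absolute convergence of the unrolled series. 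This should follow from $\gamma<1$ together with the uniform bounds $|r_{x,\xi}|<\overline{R}$, $\|\partial_x\log P_{x,\xi}\|_\infty<K_1$, $\|\partial_x r_{x,\xi}\|_\infty<K_2$ of \Cref{assum:theory} and the analogous bounds on $\overline{r}_{x,\xi}$ (which control $\overline{V}_{x,\xi},\overline{Q}_{x,\xi}$ and their $x$-derivatives), exactly as in the proofs of \Cref{prop:policygradient} and \Cref{thm:Fgrad}; the remaining index and sign bookkeeping is routine.
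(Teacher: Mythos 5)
Your proposal is correct, and it reaches \Cref{eq:decomposedhypergrad} by the same essential ingredients the paper uses (dominated convergence over $\xi$, the softmax identity $\tfrac{d\pi^*_{x,\xi}}{dx}=\tfrac1\lambda\pi^*_{x,\xi}\,\partial_x A^{\pi^*_{x,\xi}}_{\lambda,x,\xi}$ from \Cref{prop:policy_gradient_configuration}, and a Bellman unrolling in the spirit of \Cref{prop:policygradient}), but it organizes the computation differently. The paper does not split the derivative by the chain rule into a ``policy'' piece and a ``model'' piece; instead it writes $f=\sum_s\mu_x(s)\sum_a\pi^*_{x}(a;s)\overline{Q}(s,a)$ and proves by induction on the unrolled horizon a single formula containing all three groups of terms at once (the $\mu_x$ boundary term, the $\partial_x\log P_x\,\overline{V}$ transition terms up to step $n{+}1$, the $\tfrac1\lambda\partial_x A\,\overline{Q}+\tfrac{d\overline r_x}{dx}$ terms up to step $n$, plus a $\gamma^{n+1}\partial_x\overline V$ remainder), then lets $n\to\infty$; in effect it re-derives the policy-gradient-theorem-style telescoping for this combined object rather than invoking the policy gradient theorem as a lemma. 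Your modular route buys a shorter, more reusable argument (the policy contribution is exactly the standard discounted PG theorem evaluated at the softmax-optimal policy, and the model contribution plus the $\partial_x\log\mu_{x,\xi}$ boundary term is a near-verbatim replay of \Cref{prop:policygradient}), at the price of having to state the PG theorem in its trajectory/discounted-visitation form and to do the $t\mapsto t-1$ re-indexing and the $\mu_{x,\xi}(s_0)=P_{x,\xi}(s_0;s_{-1},a_{-1})$ bookkeeping explicitly, which you do; the paper's monolithic induction is self-contained and produces the interleaved trajectory form directly. Your remarks on the interchange of limits (justified by $\gamma<1$ and the uniform bounds of \Cref{assum:theory}, with analogous bounds implicitly assumed for $\overline r_{x,\xi}$) and on absorbing the overall sign of $f=-\EE[\overline V_{x,\xi}]$ match what the paper itself does implicitly, so there is no gap.
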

\looseness=-1
Here $\overline{V}_{x,\xi},\overline{Q}_{x,\xi}$ are the (unregularized) value and state action value functions with respect to $\overline{r}_{x,\xi}$. Comparing to \Cref{thm:Fgrad}, note that the expectation is over trajectories with starting states distributed according to the actual initial distribution $\mu_{x,\xi}$ instead of some $\nu$. We discuss how to construct estimators for \eqref{eq:decomposedhypergrad} in \Cref{alg:decomp_gradient_estimation} (\Cref{app:algorithms}) and prove unbiasedness in \Cref{prop:unbiaseddecomposedgradient} (\Cref{app:auxresults}). A special case of  \Cref{eq:decomposedhypergrad} appeared in~\cite{chen2022adaptive}, where they consider model design for MDPs, that does not take into account contextual uncertainty or the possibility of multiple followers, i.e when the support of $\xi$ is a singleton.

\section{Accelerated \salgo\ with Full Lower-Level Access}
\label{sec:full_access}
Previously, we assumed that the leader does not know the solver used in the lower level and queries trajectories from an oracle. However, in certain settings, such as model design~\cite{chen2022adaptive}, and dynamic mechanism design~\cite{Curry2024Automated}, the leader can additionally influence how the followers solve the CMDP. In this section, we focus on the case when the followers use a stochastic tranining procedure, which usually require a polynomial number of steps in terms of $\delta^{-1}$, to learn the optimal lower-level policy. We argue that if the leader has influence on the followers' training procedure, we can greatly reduce the number of lower-level iterations.
\begin{table}
 \caption{Bias, variance and lower-level iteration complexity of hypergradient estimators when using vanilla soft Q-learning and RT-Q.}
 \label{tab:RT-Q}
    \centering
  \begin{tabular}{@{}lll@{}}
\toprule
 & Vanilla & RT-Q \\ \midrule
Bias  & $\cO(2^{-K/2})$ & $\cO(2^{-K/2})$  \\

Variance & $\cO(1)$ & $\cO(K)$ \\

 Complexity & $\cO(K 2^{K})$ & $\cO(K^2)$ \\
 \bottomrule
\end{tabular}
    \label{tab:rt-q}
\end{table}

Let us assume that the lower level is solved using vanilla soft Q-learning (\Cref{alg:qlearning} in \Cref{app:algorithms}). According to Proposition \ref{prop:qlearningconv}~(\Cref{app:auxresults}), the follower needs to run $T= \cO(K2^K)$ iterations to ensure that $\EE\|\pi^{T}_{x,\xi}-\pi^*_{x,\xi}\|_\infty^2 \leq 2^{-K}$ and thus  $
    \norm{\EE\left[\frac{dF(x)}{dx}  - \widehat{\frac{dF_T}{dx}}\right]}_\infty=\cO(2^{-K/2}),
$ where $\pi^{T}$ denotes the learned policy after running $T$-th Q-learning iterations and $\widehat{\frac{dF_T}{dx}}$ denotes the corresponding hypergradient estimator.

To reduce the lower-level iteration complexity, we propose a randomized early stopping scheme over the lower-level soft Q-learning iterations, denoted as randomly-truncated soft Q-learning (RT-Q). The pseudocode is given in Algorithm \ref{alg:hpgdrtq}~(\Cref{app:prooffaster}). We illustrate the high-level idea below. 

Without loss of generality, consider a subsequence $t_k:= \cO (k2^k)$ such that $t_K:=T$. Let $\frac{d}{dx} F_{T}$ denote the hypergradient estimator, based on the $T$-th policy iterate $\pi^{T}$. It holds that:
\begin{equation*}
    {\frac{d}{dx} F_{T}} = {\frac{d}{dx} F_{t_K}} = {\frac{d}{dx} F_{t_1}} + \sum_{k=1}^{K-1} \left({\frac{d}{dx} F_{t_{k+1}}}-{\frac{d}{dx} F_{t_k}}\right)={\frac{d}{dx} F_{t_1}} + \EE_{k\sim p_{{k}}} \left[\frac{\frac{d}{dx} F_{t_{k+1}}-\frac{d}{dx} F_{t_{k}}}{p_{k}}\right],
\end{equation*} 
where $p_{k}$ denotes a truncated geometric distribution, such that $p_{k}\propto 2^{-k}$. The above shows that ${\frac{d}{dx} F_{t_1}} + p_{k}^{-1}\Big[\frac{d}{dx} F_{t_{k+1}}-\frac{d}{dx} F_{t_{k}}\Big]$ with $k\sim p_{k}$ is an unbiased estimator of ${\frac{d}{dx} F_{T}}$. Using this estimator, the follower does not need to run $ \cO(K2^K)$ soft Q-learning iterations but in expectation only $\sum_{k=1}^{K-1} p_{k} t_k =  \cO(K^2)$ iterations. This implies that if the leader can direct how the followers learn and observe behaviors sampled from their learned policies, we can generate a hypergradient estimator with the same bias as $\frac{d}{dx} F_{T}$ but a much smaller lower-level iteration complexity. We formalize our results in the following Theorem.

\begin{theorem}[Improved iteration complexity using RT-Q]
    \label{thm:fasterconv}
    Using Randomly Truncated soft Q-learning (RT-Q) instead of vanilla soft Q-learning to estimate the hypergradient, we achieve the bias, variance, and lower-level iteration complexity results summarized in \Cref{tab:RT-Q}.
\end{theorem}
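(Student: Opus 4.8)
The plan is to verify the three rows of Table~\ref{tab:RT-Q} separately, treating the vanilla column as an immediate consequence of Proposition~\ref{prop:qlearningconv} and the RT-Q column via the multilevel/MLMC identity sketched above. First I would set up notation: let $t_k = \Theta(k2^k)$ be the prescribed subsequence of soft Q-learning iteration counts with $t_K = T$, let $\widehat{\frac{d}{dx}F_{t_k}}$ denote the hypergradient estimator built (via \Cref{alg:sample_gradient_estimation}, or \Cref{alg:decomp_gradient_estimation} in the discounted-reward case) from the policy iterate $\pi^{t_k}_{x,\xi}$, and let $p_k \propto 2^{-k}$ on $\{1,\dots,K-1\}$ be the truncated geometric sampling distribution, so $p_k = c\,2^{-k}$ with normalizing constant $c = \big(\sum_{j=1}^{K-1} 2^{-j}\big)^{-1} \in [1,2]$. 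The RT-Q estimator is $G := \widehat{\frac{d}{dx}F_{t_1}} + p_k^{-1}\big(\widehat{\frac{d}{dx}F_{t_{k+1}}} - \widehat{\frac{d}{dx}F_{t_k}}\big)$ with $k\sim p_k$ drawn independently of the estimator randomness.

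For the \textbf{bias}, I would take expectation over $k$ first: by the telescoping identity in the excerpt, $\EE_k[G] = \widehat{\frac{d}{dx}F_{t_K}} = \widehat{\frac{d}{dx}F_T}$ pointwise in the remaining randomness, so $\EE[G]$ equals the expectation of the vanilla depth-$T$ estimator. Then the bias of $G$ is exactly the bias of $\widehat{\frac{d}{dx}F_T}$, which by \Cref{assumption:algorithm}-style reasoning is controlled by $\EE\|\pi^T_{x,\xi}-\pi^*_{x,\xi}\|_\infty$: the hypergradient map is Lipschitz in the policy (this uses \Cref{assum:theory}, the boundedness of rewards, and the $1/\lambda$ factor from the softmax derivative in \Cref{thm:Fgrad}), and Proposition~\ref{prop:qlearningconv} gives $\EE\|\pi^T_{x,\xi}-\pi^*_{x,\xi}\|_\infty^2 \le 2^{-K}$, hence by Jensen $\EE\|\pi^T_{x,\xi}-\pi^*_{x,\xi}\|_\infty \le 2^{-K/2}$, giving $\cO(2^{-K/2})$ for both columns. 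For the \textbf{lower-level complexity}, the vanilla estimator needs $t_K = \Theta(K2^K)$ iterations deterministically; the RT-Q estimator needs $\max(t_{k+1},t_k) = \Theta(t_{k+1})$ iterations when level $k$ is drawn, so the expected count is $\sum_{k=1}^{K-1} p_k \,\Theta(t_{k+1}) = \sum_{k=1}^{K-1} c\,2^{-k}\,\Theta((k+1)2^{k+1}) = \Theta\big(\sum_{k=1}^{K-1}(k+1)\big) = \Theta(K^2)$.

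The \textbf{variance} bound is the main obstacle and where the real work lies. Writing $D_k := \widehat{\frac{d}{dx}F_{t_{k+1}}} - \widehat{\frac{d}{dx}F_{t_k}}$, we have $\EE\|G\|^2 \le 2\EE\|\widehat{\frac{d}{dx}F_{t_1}}\|^2 + 2\sum_{k=1}^{K-1} p_k^{-1}\EE\|D_k\|^2$; the first term is $\cO(1)$ by the variance bound on a single hypergradient estimator (analogous to the bounded-variance step in the proof sketch of \Cref{thm:convergence}). For the sum, the key estimate is $\EE\|D_k\|^2 = \cO(2^{-k})$: the two estimators $\widehat{\frac{d}{dx}F_{t_{k+1}}}$ and $\widehat{\frac{d}{dx}F_{t_k}}$ should be \emph{coupled} — run soft Q-learning once and read off the iterates at times $t_k$ and $t_{k+1}$, and reuse the same randomly-truncated rollout horizon $T'\sim\mathrm{Geo}(1-\gamma)$ and the same sampled $(s,a)$ in \Cref{alg:sample_gradient_estimation} — so that $D_k$ is a Lipschitz function of $\pi^{t_{k+1}}_{x,\xi}-\pi^{t_k}_{x,\xi}$, and then $\EE\|\pi^{t_{k+1}}_{x,\xi}-\pi^{t_k}_{x,\xi}\|_\infty^2 \le 2\big(\EE\|\pi^{t_{k+1}}_{x,\xi}-\pi^*_{x,\xi}\|_\infty^2 + \EE\|\pi^{t_k}_{x,\xi}-\pi^*_{x,\xi}\|_\infty^2\big) = \cO(2^{-k})$ again by Proposition~\ref{prop:qlearningconv} applied at levels $k$ and $k{+}1$. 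Plugging in $p_k^{-1} = \cO(2^k)$ gives $\sum_{k=1}^{K-1} p_k^{-1}\,\cO(2^{-k}) = \sum_{k=1}^{K-1}\cO(1) = \cO(K)$, which is the claimed variance. The delicate points I would be careful about are (i) justifying that the coupled differenced estimator's second moment inherits the Lipschitz-in-policy structure despite the random horizon and importance-weight $1/(\lambda\nu(s))$ — this needs the geometric horizon to tame the $\sum_t\gamma^t$ tails and boundedness of $\overline R,V^\pi$ from \Cref{assum:theory} — and (ii) confirming that the importance-sampling reweighting $p_k^{-1}$ does not interact badly with heavy tails, which is fine here because $p_k^{-1}\cdot\EE\|D_k\|^2$ is summable by the exact cancellation of the $2^k$ and $2^{-k}$ factors.
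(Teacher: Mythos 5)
Your bias and complexity arguments match the paper's: the telescoping/importance-sampling identity gives equality of means with the depth-$T$ vanilla estimator, and the expected cost $\sum_k p_k\,\Theta(t_{k+1})=\Theta(K^2)$ is exactly the paper's computation. The variance decomposition $\EE\|G\|^2\lesssim \EE\|\widehat{dF_{t_1}/dx}\|^2+\sum_k p_k^{-1}\EE\|D_k\|^2$ is also the right skeleton. However, there is a genuine gap in your key estimate $\EE\|D_k\|^2=\cO(2^{-k})$. You claim that after coupling (shared Q-learning run, shared $(s,a)$, shared geometric horizon) the differenced estimator $D_k$ is a Lipschitz function of $\pi^{t_{k+1}}-\pi^{t_k}$. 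This is false for single-trajectory estimators: $\widehat{\partial_x A^{\pi}}(s,a)$ is a functional of a \emph{random rollout} drawn from $\pi$, not a deterministic function of $\pi$. Only its conditional expectation $\partial_x A^{\pi}(s,a)$ is Lipschitz in $\pi$. Even under the strongest coupling of the transition noise, two rollouts driven by $\pi^{t_k}$ and $\pi^{t_{k+1}}$ decouple as soon as a single action differs, after which the trajectories diverge entirely; the resulting $\EE\|D_k\|^2$ is at best of order $\EE\|\pi^{t_{k+1}}-\pi^{t_k}\|_\infty=\cO(2^{-k/2})$, not $\cO(2^{-k})$, and the sum $\sum_k p_k^{-1}\EE\|D_k\|^2$ then grows like $2^{K/2}$ rather than $K$.

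The paper closes this gap with a mini-batch: at level $k$ it averages $2^k$ independent rollouts, $\widetilde{\partial_x A^{\pi^{t_k}}}(s,a)=2^{-k}\sum_{l=1}^{2^k}\widehat{\partial_x A^{\pi^{t_k}}}(\tau_l)$, so the rollout noise around the conditional mean is itself $\cO(2^{-k})$; the remaining deterministic difference $\|\partial_x A^{\pi^{t_{k+1}}}-\partial_x A^{\pi^{t_k}}\|_\infty^2$ is then controlled by a separate recursion on $\|\partial_x Q^{\pi^{t_k}}-\partial_x Q^{\pi^*}\|_\infty$ together with the importance weights $\pi^{t_k}(a;s)/\pi^{t_{k+1}}(a;s)$ (bounded via the softmax form of the Q-learning iterates). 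The extra $2^k$ rollouts of expected $\cO(1)$ length do not change the $\cO(K^2)$ expected lower-level complexity since level $k$ already costs $\cO(k2^k)$ Q-learning steps. Without the mini-batching your variance bound does not go through, so you would need to add this ingredient (or an equivalent variance-reduction device) to complete the proof.
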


The idea has been previously studied for contextual bilevel optimization under the name randomly truncated multilevel Monte-Carlo~\cite{Giles2015Multilevel,Hu2021Bias,hu2024contextual}. The reduction in the iteration complexity generally comes at the expense of an increased variance of the hypergradient estimator. In~\cite{hu2024contextual}, this  increase is logarithmic as the lower-level problem is a static optimization problem and samples generated to estimate the hypergradient are independent from the lower-level decision variable. This structure is crucial for controlling the increased variance of the hypergradient estimator. However, for \sprob, rollouts generated from $\pi^t_{x,\xi}$ are used to estimate the hypergradient. These trajectory samples thus depend on the lower-level decision and it is not possible to control the variance as in~\cite{hu2024contextual}. 
To address this issue, we notice that the major source of randomness in our hypergradient estimators stems from the estimator $\widehat{ \partial_x A^{\pi^{t_k}}}(s,a)$ computed by \texttt{GradientEstimator}~(\Cref{alg:sample_gradient_estimation}). We control this randomness by sampling multiple trajectories with a random length, which is in expectation $\cO(1)$. We further sample an action $a$ once from $\pi^{t_{k+1}}$ and then use it to compute both $\widehat{ \partial_x A^{\pi^{t_{k+1}}}}(s,a)$ and $\widehat{ \partial_x A^{\pi^{t_k}}}(s,a)$, using importance sampling. Combining all these tricks with multi-level Monte Carlo, RT-Q achieves a variance of $\cO(K)$, where $K=\cO(\log(\delta^{-1}))$.

\begin{figure}
  \centering
  \begin{subfigure}[b]{0.34\textwidth}
    \centering
    \includegraphics[width=\textwidth]{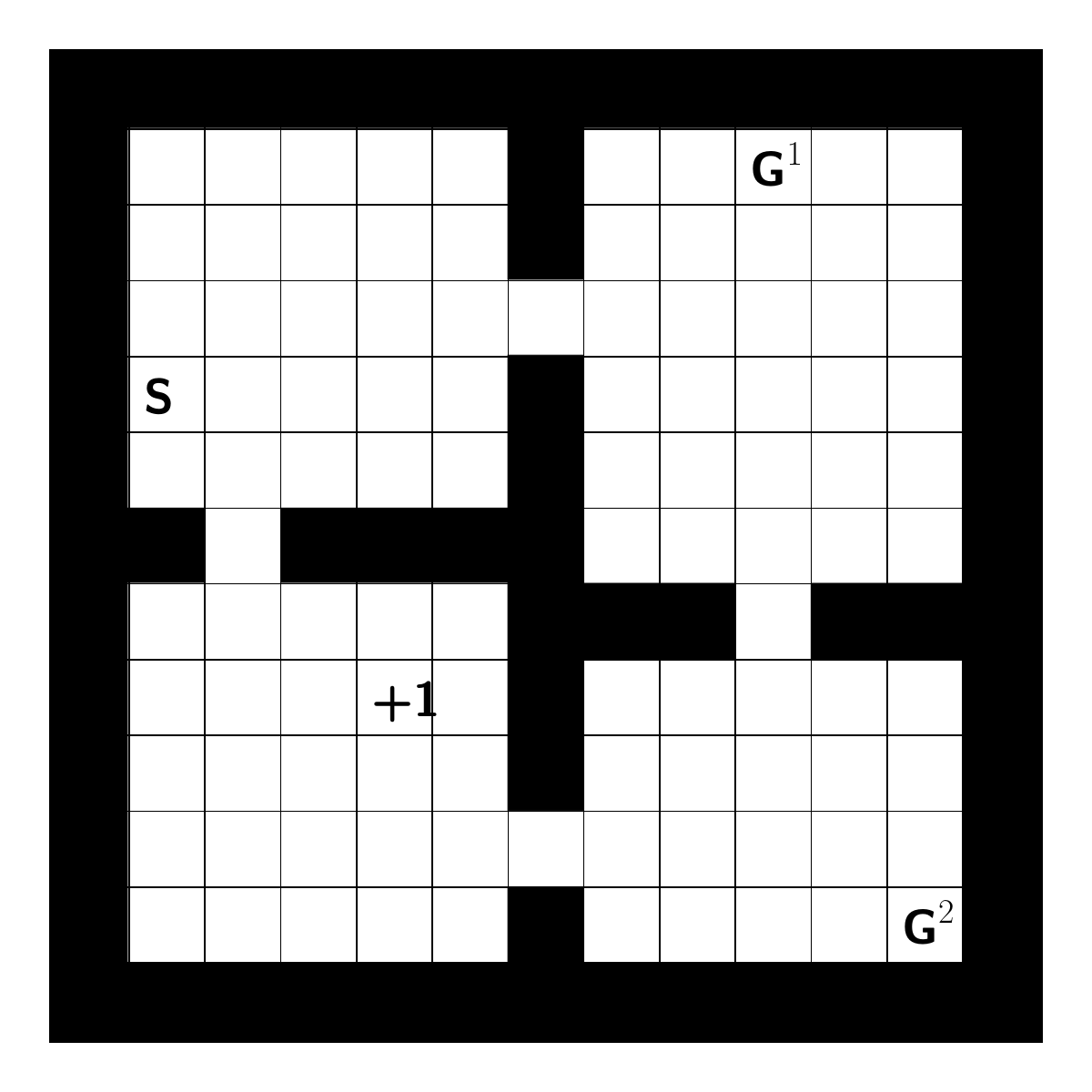}
    \caption{Four Room State Space}
    \label{fig:4rooms}
  \end{subfigure}
  \hfill
  \begin{subfigure}[b]{0.65\textwidth}
      \includegraphics[width=\textwidth]{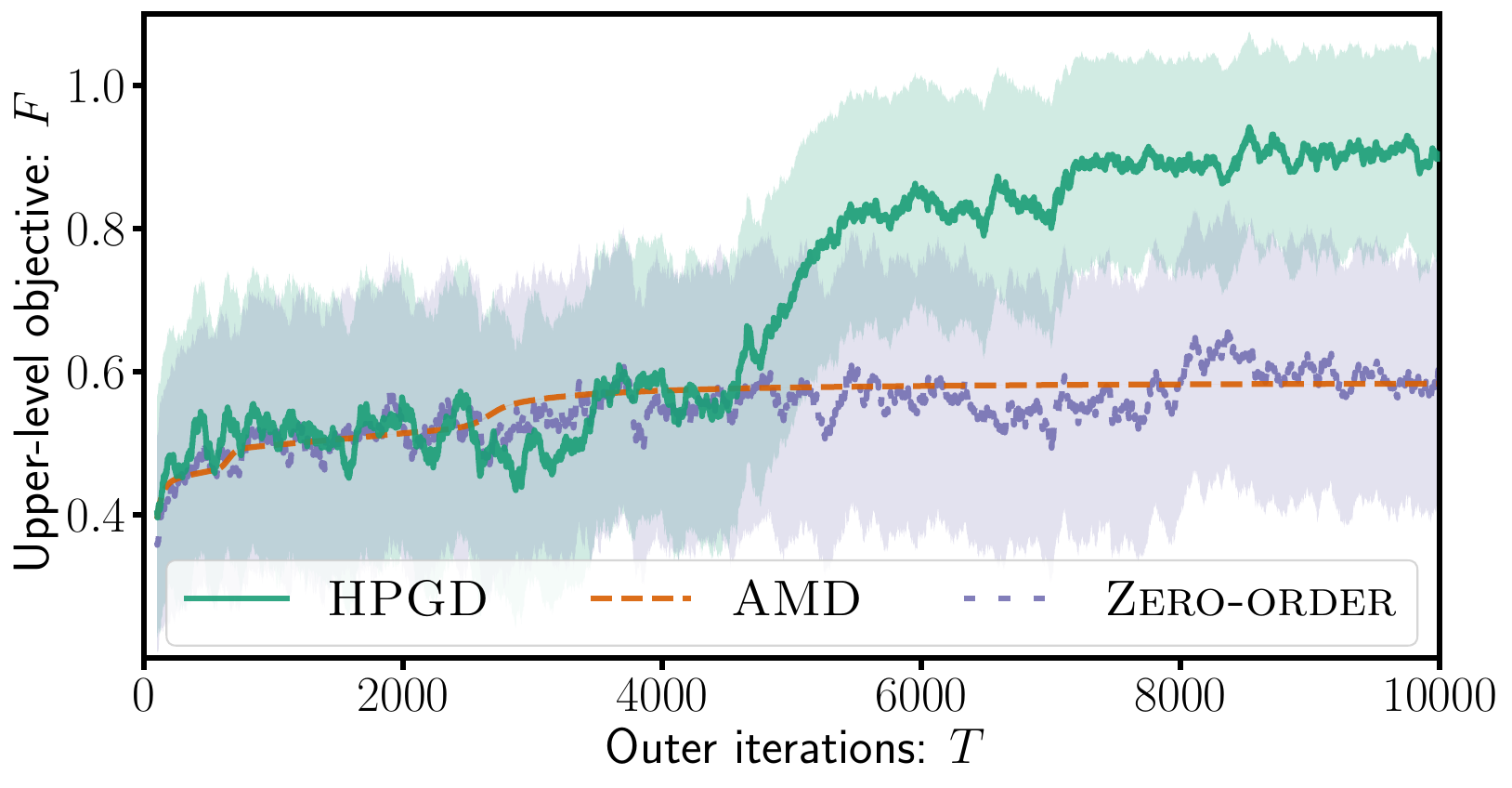}
    \caption{Upper-level objective values, $F$, over the number of outer iterations}
    \label{fig:4rooms_convergence}
  \end{subfigure}
  \caption{Four-Rooms State Space and Performance. \textbf{Left}: $S$ denotes the start state while $G^1$ and $G^2$ denote goal states that are considered separate tasks. $+1$ denotes the target cell to which the upper-level aims to steer the lower-level MDP. \textbf{Right:} \salgo\ escapes local optima achieving higher performance than comparison algorithms.}
\end{figure}

\section{Numerical Experiments}
\label{sec:numerical_experiments}
We illustrate the performance of \salgo\ in the Four-Rooms environment and on the Tax Design for Macroeconomic Model problem \citep{hill2021solving, chen2022adaptive} that we extend to multiple households with diverse preferences.
We use Adaptive Model Design (AMD)~\citep{chen2022adaptive} and a zeroth-order gradient approximation algorithm for comparison. Details on the zeroth-order algorithm are deferred to \Cref{appendix:zero_order_algorithm}.
We note that AMD is not directly applicable to \sprob\ as it was designed for solving an MDP without context. We apply it with modifications described in \Cref{appendix:amd_algorithm}.
To our knowledge, such a zeroth-order gradient method is also the first of its kind for \sprob.
The main distinction between the algorithms is the zeroth-order algorithm requires two oracle queries for each gradient calculation while \salgo\ and AMD require only one. However, the zeroth-order method only needs to observe the function value of the upper level while the latter two require first-order information about the lower-level contextual MDP.
In particular, AMD assumes complete access to the MDP to calculate the exact hypergradient while \salgo\ relies only on trajectory samples.
Technical details about the implementation \footnote{
    We implemented our experiments end-to-end in JAX \citep{jax2018github} for its runtime benefits and ease of experimentation.
    The code is available at \url{https://github.com/lasgroup/HPGD}.
} are deferred to Appendix~(\ref{appendix:implementation_four_rooms}).\looseness=-1

\subsection{Four-Rooms Environment}
Figure~(\ref{fig:4rooms}) depicts the lower-level CMDP for the Four-Rooms environment. $S$ denotes the initial position while $G^1$ and $G^2$ are goal states.
We consider the two goal states as separate tasks and define $\xi$ in \Cref{problem:multi_lower_level_conditional} to be the uniform distribution over the set of tasks, i.e., $\xi \sim \mathrm{Uniform}(\{1, 2\})$.
We denote the goal state in each task by $G^\xi$.
The state space $\cS$ is defined by the cells of the grid world while the actions are the movements in the four directions. In each step $t$, with probability $2/3$, the agent moves to $s_{t+1}$ following the chosen direction $a_t$ while it takes a random movement with probability $1/3$. The reward is always zero except when $s_t = G^\xi$ where $r(s_t, a_t) = 1$, and the episode resets. To incentivize taking the shortest path, we set the discount factor as $\gamma = 0.99$.

For the upper level, we let $x$ parameterize an additive penalty function $\Tilde{r}_{x}: \cS \times \cA \to [-0.2, 0.0]$ \footnote{The parametrization of this function is described in \Cref{appendix:four_rooms_implementation_details}.}, such that the follower receives a reward of $r+\Tilde{r}_{x}$, as in the Principal-Agent problem \cite{ben2023principal}.
The goal of the leader is to steer the followers through the cell marked with $+1$ in \Cref{fig:4rooms}, denoted by $s^{+1}$, while keeping the penalties allocated to states to their minimum. We define $\overline{r}$ in \Cref{eq:fdecomp} as
\begin{equation*}
\looseness -1    \overline{r}_{x, \xi}(s_t, a_t) = \mathds{1}_{\{s_t = s^{+1}\}} - \beta \mathds{1}_{\{s_t = G^\xi\}}\sum\nolimits_{s, a}\Tilde{r}_{x}(s,a),
\end{equation*}
where $\mathds{1}$ is the indicator function and the second term defines the cost associated with implementing the penalties for the lower level.
Note that there is a trade-off between the terms in $\overline{r}$ depending on the context variable $\xi$. If $\xi=2$, the desired change in the follower's policy can be achieved with small interventions since the shortest path from $S$ to $G^2$ is already going through the bottom-left room.
When $\xi = 1$, the leader must completely block the shortest path from $S$ to $G^1$ to divert the follower through the desired state.
An efficient algorithm for this \sprob\ problem therefore must avoid the local optimum of setting $\Tilde{r} = 0$ and find the balance between the follower visiting state $s^{+1}$ and implementing too large penalties in the CMDP.\looseness=-1

\begin{figure}
    \centering
    \includegraphics[width=\textwidth]{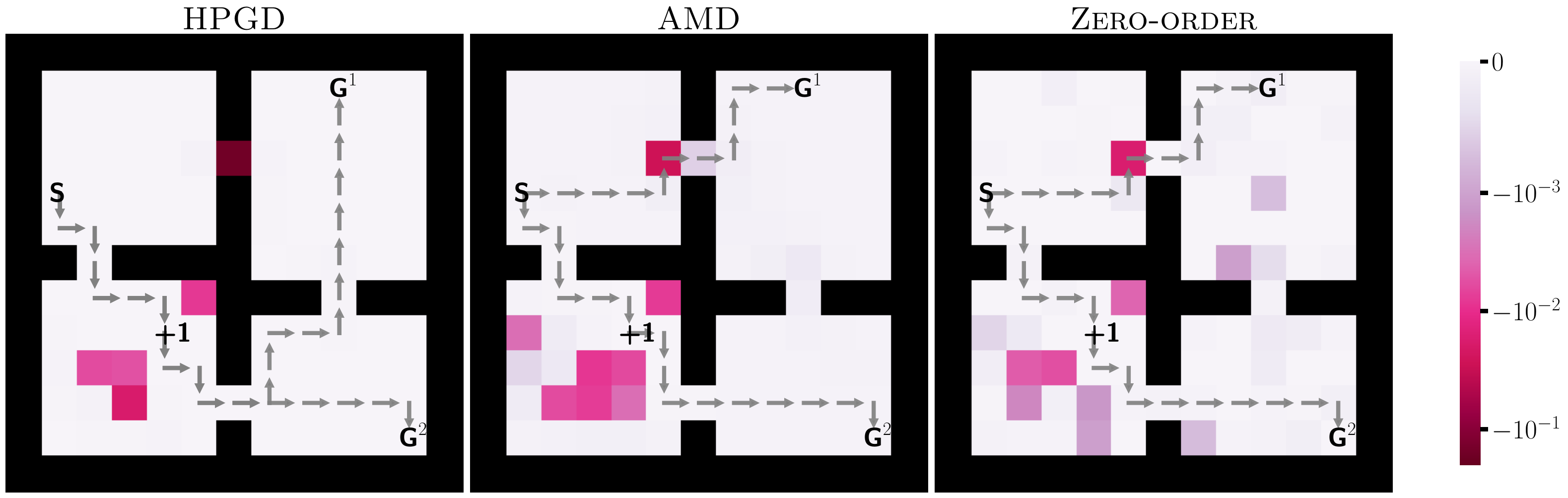}
    \caption{Reward penalties given to the lower-level agent in each state of the Four-Rooms problem optimized by the \salgo, AMD, and Zero-Order, respectively. \salgo\ efficiently steers the lower-level MDP when the task is to reach $G^1$ while others are only successful in the case of $G^2$.}
    \label{fig:4rooms_incentives}
\end{figure}

Figure~(\ref{fig:4rooms_convergence}) depicts the upper-level's objective function over the learning iterations $t$ with hyperparameters $\lambda = 0.001$ and $\beta = 1.0$.
\salgo\ outperforms both AMD and the Zero-Order algorithms in this instance in terms of overall performance. The major difference in their performances is that \salgo\ successfully escapes the local optimum of $\Tilde{r} = 0$ after about $5000$ steps and assigns all the additive penalty budget to states in the gridworld. On the contrary, AMD and Zero-Order converge to the local optimum of minimizing the implementation penalty term in $\overline{r}$. They only utilize $38\%$ and $26\%$ of the available budget of $-0.2$ to divert the follower when $\xi=2$ but neglect the goal state $G^1$.

\Cref{fig:4rooms_incentives} shows the value of additive penalties $\Tilde{r}$ in the state space with the highest probability paths for the goal states. \salgo\ successfully blocks the follower when $\xi=1$ and diverts its shortest path from $S$ to $G^1$ along the other rooms, while AMD and Zero-Order fail to assign sufficient penalty to the upper corridor to cause the same effect. All algorithms are successful in ensuring that the shortest path through the bottom-left room is going through the marked state.\looseness=-1

The parameters $\lambda$ and $\beta$ were chosen for demonstration purposes to highlight the capability of \salgo\ to escape local minima, as has been observed for SGD \citep{xie2021diffusion}. However, we emphasize that in the majority of the cases, the three algorithms perform equally as shown in \Cref{table:4rooms_parameter_comparison}. We provide the figures for the remaining hyperparameters in \Cref{appendix:four_rooms_additional_figures}.
The slightly higher performance of AMD and low standard error among initializations is expected since this algorithm calculates the gradient of $f$ deterministically while \salgo\ and Zero-Order rely on stochastic estimates yielding more variations, especially for the Zero-Order approach.

\begin{table}[t]
  \caption{Performance over hyperparameters $\beta$ and $\lambda$ for the Four Rooms Problem averaged over $10$ random seeds with standard errors. Algorithms perform on-par for most hyperparameters while \salgo\ outperforms others in few. AMD enjoys low variance due to the non-stochastic gradient updates while Zero-Order suffers from the most variation.}
  \label{table:4rooms_parameter_comparison}
  \centering
  \begin{tabular}{ccccc}
    \toprule
    \multicolumn{2}{c}{Parameters} & \multicolumn{3}{c}{Algorithms} \\
    $\lambda$ & $\beta$ & \salgo\  & AMD & Zero-Order\\
    \midrule
    $0.001$ & $1$ & $\textbf{0.91} \pm 0.088$ & $0.58 \pm 0.000$ & $0.59 \pm 0.059$ \\
    $0.001$ & $3$ & $0.51 \pm 0.006$ & $0.51 \pm 0.000$ & $0.50 \pm 0.005$ \\
    $0.001$ & $5$ & $0.46 \pm 0.006$ & $0.46 \pm 0.003$ & $0.46 \pm 0.007$ \\

    $0.003$ & $1$ & $0.95 \pm 0.002$ & $1.00 \pm 0.000$ & $0.91 \pm 0.048$ \\
    $0.003$ & $3$ & $\textbf{0.73} \pm 0.001$ & $0.39 \pm 0.000$ & $0.40 \pm 0.028$ \\
    $0.003$ & $5$ & $0.29 \pm 0.003$ & $0.32 \pm 0.000$ & $0.32 \pm 0.002$ \\

    $0.005$ & $1$ & $1.17 \pm 0.011$ & $1.28 \pm 0.003$ & $1.15 \pm 0.026$ \\
    $0.005$ & $3$ & $1.01 \pm 0.002$ & $1.13 \pm 0.004$ & $1.02 \pm 0.027$ \\
    $0.005$ & $5$ & $0.87 \pm 0.003$ & $0.97 \pm 0.009$ & $0.79 \pm 0.027$ \\
    \bottomrule
  \end{tabular}
\end{table}

\subsection{Tax Design for Macroeconomic Models}
We test \salgo~on a bilevel macroeconomic model evaluating taxation schemes based on \citep{hill2021solving, chen2022adaptive}.
The economic model consists of a finite set of consumption goods $i \in \{1,\dots, M\}$ each with price $p_i$. We choose $M=3$ with unit prices.
On the lower-level, at each time step $t$, $s_t$ denotes the accumulated assets of a household which in turn chooses the number of hours worked $n_t$ and consumption $c_{i,t}$.
The environment updates the accumulated assets of the household as $s_{t+1} = s_t + (1-x)wn_t - \sum_{i=1}^M c_{i,t} + \epsilon$ where $\epsilon$ is sampled from a normal distribution with mean $0$ and standard deviation $\varsigma$. We clip the accumulated assets to the range $[-100, 100]$ for numerical stability.
The utility of a household is given by $u_t = \sigma(s_t) - \theta n_t^2 + \prod_{i=1}^M (c_{i,t} / (p(1+y_i)))^{\alpha_i}$ where the product-of-consumption term corresponds to the Cobb-Douglas function \citep{roth2016watch} and $\sigma(\cdot)$ is the value of accumulated assets.
We define $\mathbb{P}_\xi$ as the distribution of households representing different socio-economic groups in the economy.
In particular, we define two equal-sized groups with $\alpha = (0.6, 0.3, 0.1)$ and $(0.1, 0.7, 0.2)$ that model their different preferences over the goods in the economy and optimize their consumption behavior using regularized deep Q-learning~\citep{geist2019theory} with $\lambda = 0.2$.
On the upper-level, a tax designer is optimizing the discounted sum of social welfare by setting the income tax $x \in [0, 3]$ and value-added tax $y_i \in [0,3]$ for $i \in \{1,\dots,M\}$.
In each time step $t$, the social welfare is defined as $v_t = \omega(s_t) + \sum_{i=1}^M c_{i,t} / (1+y_i) + \phi \log (\sum_{i=1}^M c_{i,t} y_i / (1+y_i) + wxn_t)$ where $\omega(\cdot)$ is the utility of accumulated assets and $\phi$ is a positive hyperparameter.

\Cref{fig:tax_design_performance} demonstrates that \salgo\ can successfully optimize the hyperparameters even in continuous complex problems. Benefiting from first-order information, \salgo\ converges faster than the Zero-Order algorithm and increases the social-welfare by about $25\%$. Additionally, 
\salgo\ shows better qualitative results for the optimised tax rates in \Cref{fig:tax_design_tax_outcomes}. \salgo\ swiftly increases the income tax to improve its objective by increasing tax revenues while sets value-added tax rates according to the preferences of the household groups. Households on average prefer the second good the most, therefore, $y_2$ is set low to increase consumption and therefore maximize the second term in $v_t$ while the third good is the least preferred for which the tax rate is increased\footnote{$y_2 = 1.9$ after $1000$ iterations. The figure is cropped for better readability.} since consumption is already low.
While Zero-Order shows equivalent performance on \Cref{fig:tax_design_performance} to \salgo\, it fails to distinguish goods when setting value-added tax rates.
We defer further details on the implementation, hyperparameter choices, and additional results to \Cref{appendix:tax_design_implementation_details}.

\begin{figure}
    \begin{subfigure}[T]{0.4\textwidth}
        \centering
        \includegraphics[width=\textwidth]{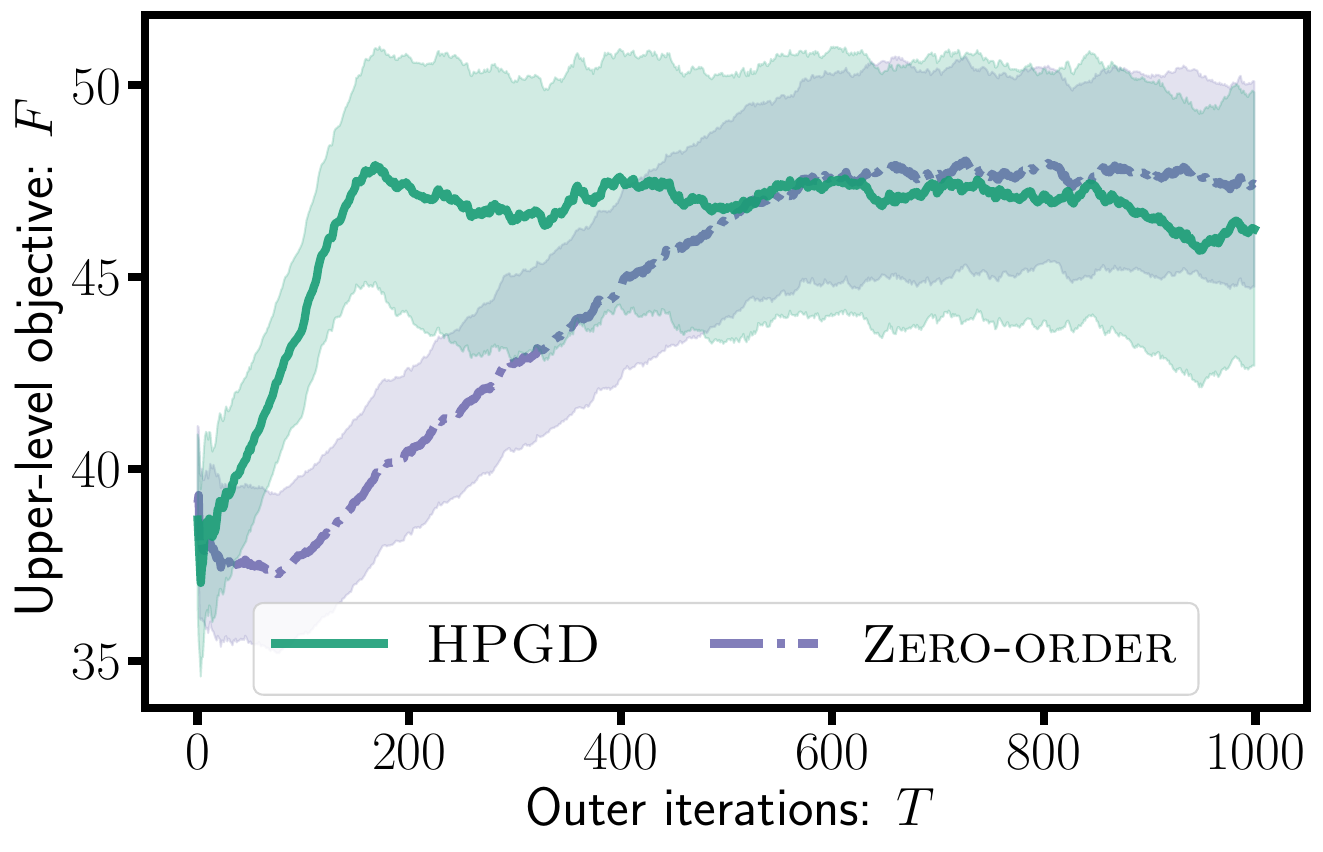}
        \caption{Performance on the Tax Design problem. \salgo\ quickly learn optimal tax policies while Zero-Order takes more iterations.}
        \label{fig:tax_design_performance}
    \end{subfigure}
    \hspace{2pt}
    \begin{subfigure}[T]{0.6\textwidth}
        \centering
        \includegraphics[width=\textwidth]{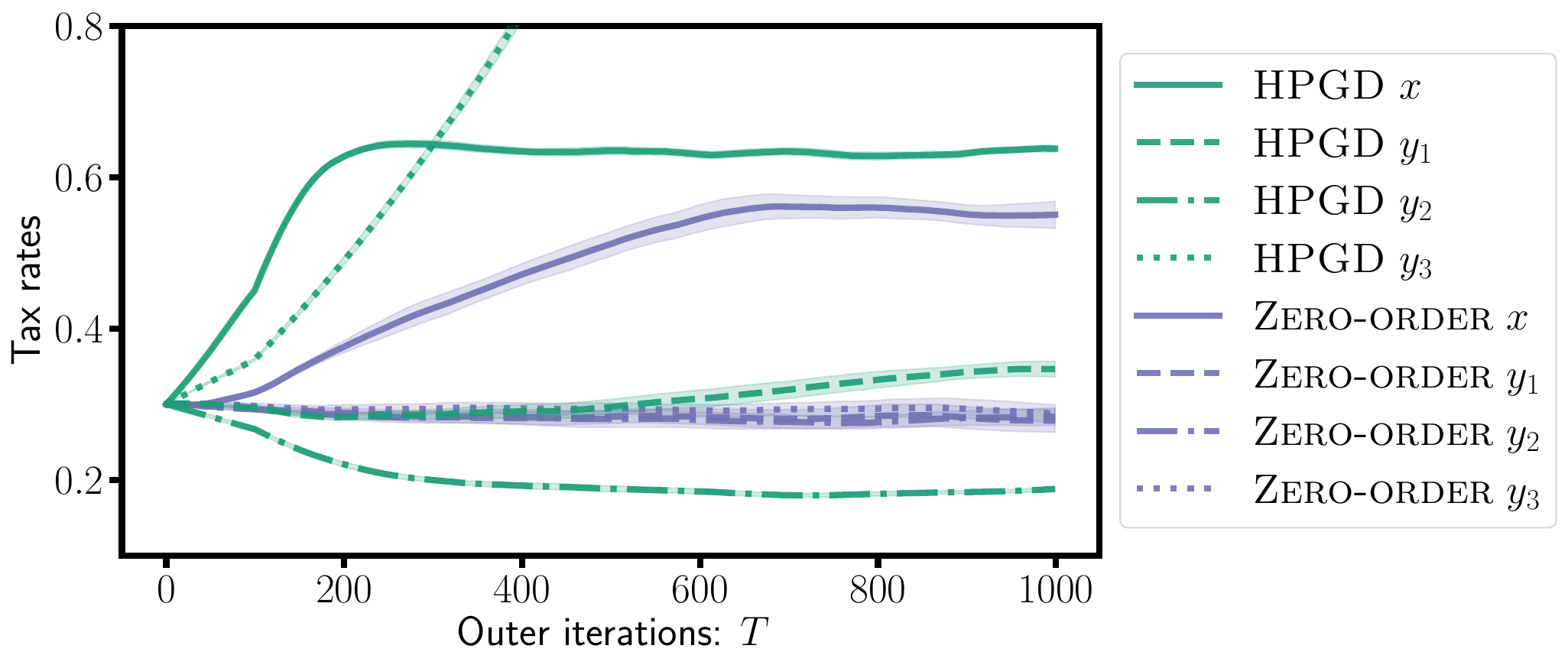}
        \caption{Tax rates over the outer iterations. \salgo\ increases income tax quickly and distinguishes VAT rates according to preferences.}
        \label{fig:tax_design_tax_outcomes}
    \end{subfigure}
\end{figure}

\section{Conclusion}
\looseness-1
We introduce \sprob, a class of stochastic bilevel optimization problems with lower-level contextual MDPs that capture a wide range of important applications, where a leader aims to design environments and incentive structures that align the followers' policies with the leader's upper-level objective. We propose \salgo, an oracle-based algorithmic framework, and analyze its convergence. Importantly, \salgo\ works with any existing algorithm that solves the lower-level CMDP to near-optimality, making it suitable in various regimes when the leader can only observe trajectories of followers. Moreover, \salgo\ is the first provably convergent algorithm in this area, which uses stochastic estimates of the hypergradient. We further propose RT-Q, a more efficient algorithm and study its bias, variance, and cost when the leader can fully control the followers' training. Numerical results further validate the expressiveness of the proposed model and the performance of our algorithm. Future directions include 1) applying HPGD in various real-world applications, 2) studying the setting when the lower-level problem is a game, and 3) studying single-loop algorithms for bilevel reinforcement learning with when the lower-level is just an MDP. %

\section*{Acknowledgements}
The authors acknowledge constructive suggestions from Niao He and Sven Seuken. V. Thoma and  B. Pasztor are supported by an ETH AI Center Doctoral Fellowship. V. Thoma acknowledges funding from the Swiss National Science Foundation (SNSF) Project Funding No. 200021-207343. This work was supported as a part of NCCR Automation, a National Centre of Competence (or Excellence) in Research, funded by the SNSF (grant number 51NF40\_225155).

\medskip
\newpage
\bibliographystyle{plain}
\bibliography{ref}
\newpage
\tableofcontents
\newpage
\appendix

\newpage
\section{Frequently-Used Notation}
\begin{table}[h]
\caption{Table of notation used in the paper.}
    \centering
    \begin{tabular}{ll}
        \toprule
        \textbf{Notation} & \textbf{Description} \\
        \midrule
        $x$ & Upper-level decision variable/leader's decision\\
        $\xi$ & Contextual variable\\
        $\mathcal{M}_{x,\xi}$ & MDP parameterized by $x$ and $\xi$\\
        $\mathcal{S}$ & State Space\\
        $\mathcal{A}$ & Action Space\\
        $r_{x,\xi}$ & Reward function\\
        $P_{x,\xi}$ & Transition kernel\\
        $\mu_{x,\xi}$ & Initial state distribution\\
        $\gamma$ & Discount factor\\
        $\tau$ & Trajectories of  MDP\\
        $\pi_{x,\xi}$ & Follower policy for $\mathcal{M}_{x,\xi}$\\
        $J_{\lambda,x,\xi}(\pi)$ & Entropy-regularized lower-level objective function\\
        $s_0$ & Initial state\\
        $H(\pi;s)$ & Entropy of policy $\pi$ at state $s$\\
        $V^\pi_{\lambda,x,\xi}(s)$ & Value function\\
        $Q^\pi_{\lambda,x,\xi}(s,a)$ & Q-function\\
        $A^\pi_{\lambda,x,\xi}(s,a)$ & Advantage function\\
        $\pi^*_{x,\xi}$ & Optimal policy maximizing $J_{\lambda,x,\xi}(\pi)$ (dependence on $\lambda$ is droped)\\
         $\pi^o_{x,\xi}$ & Oracle policy with distance $\delta$ from $\pi^*_{x,\xi}$\\
          $\pi^{t_k}_{x,\xi}$ & Follower policy after performing $t_k$ learning steps\\
        $f(x,\pi_{x,\xi},\xi)$ & Upper-level loss for specific context $\xi$\\
        $F(x)$ & Upper-level loss function\\
        $\lambda$ & Regularization parameter\\
        $L_f$ & Lipschitz continuity parameter of $f$\\
        $S_f$ & Smoothness parameter of $f$\\
        $\overline{R}$ & Upper bound on absolute value of reward function\\
        $K_1,K_2$ & $\norm{\partial_x \log P_{x,\xi}(s';s,a)}_\infty<K_1$, $\norm{\partial_x r_{x,\xi}(s,a)}_\infty<K_2$.\\
        $\delta$ & Oracle inaccuracy, such that $\forall x,\forall\xi:\EE_{o}\left[\norm{\pi^*_{x,\xi}-\pi^o_{x,\xi}}^2_{\infty}\right]\leq \delta^2$\\
        RT-Q & Randomnly-Truncated Soft Q-learning\\
        \sprob & \prob\\
        \salgo & \algo\\
        $K$ & Used to define bias,variance and complexity of RT-Qneu\\
        $\nu$ & Sampling distribution to estimate hypergradient\\
        $m$ & $m:=\min_s \nu(s)$\\
        $a$ & test\\
        $\widehat{ \partial_x A^{\pi^{t_k}}}(s,a)$ & Estimate of Advantage derivative, obtained from \Cref{alg:sample_gradient_estimation}\\
         $\widehat{ \partial_x Q^{\pi^{t_k}}}(s,a)$ & Estimate of Q derivative, obtained from \Cref{alg:sample_gradient_estimation}.\\
        ${\widetilde{\partial_x Q^{\pi^{t_k}}}}$ & Smoothed Q derivative setimate, ${\widetilde{\partial_x Q^{\pi^{t_k}}}}:=\frac{1}{2^{k}}\sum_{l=1}^{2^{k}}  \widehat{ \partial_x Q^{\pi^{t_k}}}(\tau_l)$  \\
        $\overline{r}$ & Upper-Level reward function for special loss function (cf. \Cref{sec:linear_loss_function})\\
        $\overline{V},\overline{Q}$ & Unregularized upper-level value and Q functions for $\overline{r}$\\
        Geo$(1-\gamma)$& Geometric distribution with parameter $1-\gamma$\\
        $\mathcal{T}^*_\lambda$& Soft Bellman optimality operator\\
        \bottomrule
    \end{tabular}
    \label{tab:notation}
\end{table}
\newpage
\section{Algorithms}
\label{app:algorithms}
We give the pseudocode to certain algorithms/routines/procedures mentioned in the main text.

\begin{algorithm}[H]
\caption{\texttt{GradientEstimator}$(\xi,x,s,a,o)$}
\label{alg:sample_gradient_estimation}
\begin{algorithmic}[H]
\STATE \textbf{Input: } $\xi$, $x$ state $s$, action $a$, trajectory oracle $o$
\STATE $T_Q,T_V \sim \text{Geo}(1-\gamma)$, $T'_Q,T'_V \sim \text{Geo}(1-\gamma^{0.5})$\\
\STATE $\tau_{Q}\gets \texttt{SampleTrajectory}(o,\texttt{start}=(s,a),\texttt{length}=T_Q+T'_Q+1)$ 
\STATE $\tau_{V}\gets \texttt{SampleTrajectory}(o,\texttt{start}=s,\texttt{length}=T_V+T'_V+1)$ 
\STATE $\widehat{\frac{d}{dx} Q}(s, a)\gets \sum_{t=0}^{T_Q} \frac{d}{dx} r(s^{\tau_Q}_t,a^{\tau_Q}_t)+$\\
\STATE$\frac{\gamma}{1-\gamma} \frac{d}{dx} \log P(s^{\tau_Q}_{T_Q+1};s^{\tau_Q}_{T_Q},a^{\tau_Q}_{T_Q}) \sum_{t=T_Q+1}^{T_Q+T'_Q+1} \gamma^{(t-T_Q-1)/2} \left(r(s^{\tau_Q}_t,a^{\tau_Q}_t)+\lambda H(\pi(\cdot ;s_t))\right)$
\STATE $\widehat{\partial_x V}(s)\gets \sum_{t=0}^{T_V} \partial_x r(s^{\tau_V}_t,a^{\tau_V}_t)+$\\
\STATE$\frac{\gamma}{1-\gamma} \partial_x \log P(s^{\tau_V}_{T_V+1};s^{\tau_V}_{T_V},a^{\tau_V}_{T_V}) \sum_{t=T_V+1}^{T_V+T'_V+1} \gamma^{(t-T_V-1)/2} \left(r(s^{\tau_V}_t,a^{\tau_V}_t)+\lambda H(\pi(\cdot ;s_t))\right)$
\STATE \textbf{Output:} $\widehat{\partial_x A(s,a)}\gets\widehat{\partial_x Q}(s, a)-\widehat{\partial_x V}(s)$

\end{algorithmic}
\end{algorithm}

\begin{algorithm}[H]
\caption{Soft Value Iteration}
\begin{algorithmic}[1]
\label{alg:valueiteration}
\STATE \textbf{Input:} Number of iterations $T$
\STATE \textbf{Result:} Approximation $V_{\lambda} \approx V^*_{\lambda}$, policy $\pi_{\lambda} \approx \pi^*_{\lambda}$
\STATE Initialize $V_{\lambda} = 0$
\FOR{$t=0$ \TO $T$}
    \FOR{$s \in \cS$}
    \FOR{$a \in \mathcal{A}$}
        \STATE $Q_{\lambda}(s,a) = r(s, a) + \gamma \mathbb{E}_{s' \mid s, a}\left[V_{\lambda}(s')\right]$
    \ENDFOR
        \STATE $V_{\text{new},\lambda}(s) = \lambda \log \left(\sum_{a \in \mathcal{A}} \exp \left(\frac{Q_{\lambda}(s,a)}{\lambda}\right)\right)$
    \ENDFOR
     \STATE set $V_{\lambda} := V_{\text{new},\lambda}$
\ENDFOR

\STATE $ \pi_{\lambda}^o \leftarrow \frac{\exp(Q_{\lambda}(s,a)/\lambda)}{\sum_a \exp(Q_{\lambda}(s,a)/\lambda)}$
\STATE \textbf{return} $V_{\lambda} $ and $\pi_{\lambda}^o $
\end{algorithmic}
\end{algorithm}

\begin{algorithm}[H]
\caption{\texttt{SoftQlearning}$(T,\pi_B,\{\alpha_{t}\}_{t\geq 0})$}
\begin{algorithmic}[1]
\label{alg:qlearning}
\STATE \textbf{Input:} Number of iterations $T$, Behavioural Policy $\pi_B$, Stepsizes $\{\alpha_{t}\}_{t\geq 0}$
\STATE \textbf{Result:} Approximation $Q_{\lambda} \approx Q^*_{\lambda}$, policy $\pi_{\lambda} \approx \pi^*_{\lambda}$
\STATE Initialize $Q_{\lambda} = 0$
 \STATE Initialise $s_0$
\FOR{$t=0$ \TO $T$}
    \STATE Sample $a \sim \pi_B(\cdot; s_t)$
    \STATE Observe next reward $r(s_t,a)$ and state $s_{t+1}\sim P(\cdot|s_t,a)$
    \STATE $Q_{\lambda}(s_t,a) = Q_{\lambda}(s_t,a) +\alpha_{t} \left( r(s_t, a) + \gamma \lambda \log \left(\sum_{a' \in \mathcal{A}} \exp \left(\frac{Q_\lambda(s_{t+1},a')}{\lambda}\right)\right)\right) $
\ENDFOR
\STATE $ \pi_{\lambda}^o(a;s) \leftarrow \frac{\exp(Q_{\lambda}(a|s)/\lambda)}{\sum_{a'} \exp(Q_{\lambda}(s,a')/\lambda)}$
\STATE \textbf{return} $Q_\lambda$ and $\pi_{\lambda}^o $
\end{algorithmic}
\end{algorithm}

\begin{algorithm}[H]
\caption{\texttt{DecomposableGradientEstimator}}
\label{alg:decomp_gradient_estimation}
\begin{algorithmic}[H]
\STATE \textbf{Input: } $\xi$, $x$, initial distribution $\mu_{x,\xi}$, oracle $o$
\STATE $T, \sim \text{Geo}(1-\gamma)$, $T' \sim \text{Geo}(1-\gamma^{0.5})$\\
\STATE $(s_0,a_0,\dots,s_{T+T'},a_{T+T'})\gets \texttt{SampleTrajectory}(o,\texttt{start}=\mu_{x,\xi},\texttt{length}=T+T')$ 
\STATE $\widehat{A^{\pi^o_{x,\xi}}_{\lambda,x,\xi}}(s_T,a_T)\gets \texttt{GradientEstimator}(\xi,x,s_T,a_T,o)$
\STATE $\widehat{\frac{dF}{dx}}=\left(\sum_{t=0}^T \frac{d}{dx} \overline{r}(s_t,a_t) \right)  + \frac{1}{\lambda(1-\gamma)} \partial_x \widehat{A^{\pi^o_{x,\xi}}_{\lambda,x,\xi}} (s_T,a_T) \sum_{t'=T}^{T+T'} \gamma^{(t-T)/2} \overline{r}(s_{t'},a_{t'})$
\STATE\quad $+\frac{1}{1-\gamma} \partial_x \log P(s_T,a_{T-1},s_{T-1}) \sum_{t'=T}^{T+T'} \gamma^{(t'-T)/2}\overline{r}(s_{t'},a_{t'})$
\STATE \textbf{Output:} $\widehat{\frac{dF}{dx}}$
\end{algorithmic}
\end{algorithm}

\begin{algorithm}[H]
\caption{Vanilla Policy Gradient Algorithm}
\label{alg:pg}
\begin{algorithmic}[H]
\STATE \textbf{Data:} Initial parameter $\theta_0$, initial state $s$
\STATE \textbf{Result:} Approximate policy $\pi_{\theta_L}$
\FOR{$l = 0$ \TO $L$}
    \STATE Sample $T \sim \text{Geo}(1-\gamma)$
    \STATE Sample trajectory $(s_0, a_0, s_1, \ldots, a_{T-1}, s_{T}, r_{T}, a_{T})$ using policy $\pi_{\theta_{l}}$
    \STATE Sample $T' \sim \text{Geo}(1-\gamma^2)$
    \STATE Set $\tilde{s}_0 = s_{T'}$ and $\tilde{a}_0 = a_{T}$
    \STATE Sample trajectory $(\tilde{s}_0, \tilde{a}_0, \tilde{s}_1, \ldots, \tilde{a}_{T'-1}, \tilde{s}_{T'}, \tilde{r}_{T'}, \tilde{a}_{T'})$ using policy $\pi_{\theta_{l}}$
    \STATE Determine step-size $\alpha$.
    \STATE $\widehat{\nabla J}_s(\theta_{l}) = \frac{1}{1-\gamma} \nabla \log \pi_{\theta_{l}}(a_{T} | s_{T}) \sum_{t'=0}^{T'-1} \gamma^{t'/2} \tilde{r}_{t'+1}$
    \STATE $\theta_{l+1} = \theta_{l} - \alpha \widehat{\nabla J}_s(\theta_{l})$
\ENDFOR
\end{algorithmic}
\end{algorithm}

\section{Proofs}
\label{app:proofs}
\subsection{Overview}
In this section, we provide proofs for the presented theorems and propositions. We provide the proof of \Cref{thm:Fgrad}, deriving the hypergradient of function $F(x)$; the proof of \Cref{prop:policygradient}, deriving the derivative of the action-value function with respect to $x$; the proof of our main result, \Cref{thm:convergence}, which shows convergence of \salgo\ to a stationary point of $F(x)$.

For the propositions, we show how to estimate the upper-level gradient if $f$ is decomposable in the proof of \Cref{prop:polgradlinear}; In \Cref{prop:policy_gradient_configuration} we show how to compute the gradient of the optimal policy with respect to $x$; the proof of \Cref{prop:unbiasedgradient}, which shows we can achieve unbiased estimates of the advantage hypergradient; and the proof of \Cref{prop:unbiaseddecomposedgradient}, which shows the same for the special case when $f$ decomposes.

We state and proof Propositions \ref{prop:valueiterconv} to \ref{prop:npgconv} which show convergence in $L_2$ to the optimal policy of soft value iteration, soft Q-learning, Vanilla Policy Gradient and Natural Policy Gradient respectively. 

Lastly, we prove \Cref{thm:fasterconv}, regarding the reduced iteration complexity of RT-Q claimed in \Cref{sec:full_access}.

\subsection{{Proof of }\texorpdfstring{Theorem \ref{thm:Fgrad}}{}}
\begin{proof}
The proof relies on on three main ideas. 
\begin{enumerate}
    \item Show that Dominated Convergence applies, i.e. all derivatives are uniformly bounded by an integrable function. Thus we can exchange derivative and expectation and compute the derivative of $f$ instead of $F$.
    \item Use \Cref{prop:policy_gradient_configuration} to get an expression for the derivative of the optimal policy with respect to $x$.
    \item Use importance sampling with some distribution $\nu$ to get an expression of the hypergradient that we can cheaply sample, instead of having to multiply two matrices with size $|\cS|\times |\cA|$.
\end{enumerate}
    By Proposition \ref{prop:policygradient}, it follows that
\begin{align*}
    \norm{\frac{\partial \pi^*_{x,\xi}}{\partial x}}_{\infty} &\leq \frac{2}{\lambda} \norm{\partial_x Q^{\pi^*_{x,\xi}}_{\lambda,x,\xi}(s,a)}_{\infty}\\
    &\leq  \frac{2}{\lambda} \frac{K_2}{1-\gamma} \frac{K_1 \overline{R}K_1}{(1-\gamma)^2}.
\end{align*}
As the partial derivatives of $f$ are bounded by $L_f$ (cf. \Cref{assum:theory}), we can apply the Dominated Convergence Theorem to get 

\begin{align}
\partial_x \EE\left[ f(x, \pi^*_{x,\xi}, \xi) \right] &=  \EE\left[\partial_x f(x, \pi^*_{x,\xi}, \xi) \right] \nonumber \\
&= \EE\left[\frac{\partial_1 f(x, \pi^*_{x,\xi},\xi)}{\partial x} + \frac{\partial_2 f(x, \pi^*_{x,\xi},\xi)}{\partial \pi^*_{x,\xi}} \frac{\partial \pi^*_{x,\xi}}{\partial x}\right]\nonumber \\
&= \EE\left[\frac{\partial_1 f(x, \pi^*_{x,\xi},\xi)}{\partial x} + \sum_{s,a}\frac{\partial_2 f(x, \pi^*_{x,\xi},\xi)}{\partial \pi^*_{x,\xi}(a;s)} \frac{\partial \pi^*_{x,\xi}(a;s)}{\partial x}\right]\nonumber \\
&= \EE\left[\frac{\partial_1 f(x, \pi^*_{x,\xi},\xi)}{\partial x} + \sum_{s,a}\frac{\partial_2 f(x, \pi^*_{x,\xi},\xi)}{\partial \pi^*_{x,\xi}(a;s)} \frac{1}{\lambda} \pi^*_{x,\xi}(a;s) \partial_x{A^{\pi^*_{x,\xi}}_{\lambda,x,\xi}(s,a)}\right] \label{eq:hyper_proof_1}\\
&= \EE\left[\frac{\partial_1 f(x, \pi^*_{x,\xi},\xi)}{\partial x} + \EE_{s\sim \nu, a\sim \pi^*_{x,\xi}} \left[ \frac{1}{\lambda \nu(s)} \frac{\partial_2 f(x, \pi^*_{x,\xi},\xi)}{\partial \pi^*_{x,\xi}(a;s)}   \partial_x{A^{\pi^*_{x,\xi}}_{\lambda,x,\xi}(s,a)}\right] \right] \nonumber,
\end{align}
where we use \Cref{prop:policy_gradient_configuration} for \cref{eq:hyper_proof_1} and importance sampling with any distribution $\nu$ in the last equality, as long as $\nu$ has full support on $\cS$. Further, we note that $\frac{\partial_2 f(x, \pi^*_{x,\xi},\xi)}{\partial \pi^*_{x,\xi}} \in \text{Mat}_{1,|S|\times|A|}(\RR)$ and $\frac{\partial \pi^*_{x,\xi}}{\partial x}\in \text{Mat}_{|S|\times|A|,d}(\RR) $. Hence, we just explicitely write out the matrix multiplication for the second equality. The second equality follows from the multivariate chain rule and the first equality from the Dominated Convergence Theorem.
\end{proof}

\subsection{Proof of \texorpdfstring{\Cref{prop:policygradient}}{}}
\begin{proof}
\Cref{prop:policygradient} is important as $\partial_x{A^{\pi}_{\lambda,x,\xi}(s,a)}$ and thus $\partial_x{Q^{\pi}_{\lambda,x,\xi}(s,a)}$ allow us to compute $  \frac{d \pi^*_{x,\xi}}{d x}$ (cf. \Cref{prop:policy_gradient_configuration}). We will prove the theorem using an induction proof, which follows the analysis of the policy gradient theorem. However, instead of at each timestep $\pi(a_t;s_t)$ depending on a policy parameter $\theta$, it will be the transition $P_{x,\xi}(s_{t+1};s_t,a_t)$ and reward $r_{x,\xi}(s_t,a_t)$ depending on $x$. Also note that we only consider the partial derivative with respect to $x$, evaluated at some policy $\pi$, which in the case of $\pi^*_{x,\xi}$

In the following, we will show by induction that \[\frac{d{Q^{\pi}_{\lambda,x,\xi}(s,a)}}{dx}  = \sum_{t=0}^\infty \sum_{s',a'} \gamma^t p_{x,\xi}(s,a \rightarrow s',a';t,\pi) \left(\frac{d r_{x,\xi}(s',a') }{dx}+\gamma \sum_{s''} \frac{d P_{x,\xi}(s'';s',a') }{dx}V^{\pi}_{\lambda,x,\xi}(s'') \right),\]
where $p_{x,\xi}(s,a \rightarrow s',a';t,\pi)$ is the probability that the Markov Chain induced by $\pi$, starting from $s,a$ reaches $s',a'$ after $t$ steps. Note, the formulation is equivalent to the one stated in \Cref{prop:policygradient}.

The proof follows the analysis of the standard policy gradient theorem. We drop here the dependence on $x$ and $\xi$ to simplify the notation. Assuming that $Q^{\pi}_{\lambda}({s},{a})$ is differentiable for all $s,a$, we show by induction that for all $n \in \mathbb{N}$ it holds that
\begin{equation}
\begin{aligned}
    \frac{d{Q^{\pi}_{\lambda}(s,a)}}{dx}  =& \sum_{t=0}^n \sum_{s',a'} \gamma^t p(s,a \rightarrow s',a';t,\pi) \left(\frac{d r(s',a') }{dx}+ \gamma \sum_{s''} \frac{d P(s'';s',a') }{dx}V^{\pi}_{\lambda}(s'') \right) \label{eq:pgforinduction}\\
    &+\gamma^{n+1} \sum_{\Tilde{s},\Tilde{a}} p(s,a \rightarrow \Tilde{s},\Tilde{a};n+1,\pi) \frac{d{Q^{\pi}_{\lambda}(\Tilde{s},\Tilde{a})}}{dx}.
\end{aligned}
\end{equation}
The claim then follows as $n \rightarrow \infty$.
\paragraph{Base case $(n=0)$}
It is easy to check that
\begin{align*}
   \frac{d{Q^{\pi}_{\lambda}(s,a)}}{dx}  =& \frac{d}{dx} \left( r(s,a) + \gamma \sum_{s'} P(s';s,a) V_{\lambda}(s')\right) \\
   =& \frac{d}{dx}r(s,a) + \gamma \sum_{s'} \left(\frac{d}{dx}P(s';s,a) V^{\pi}_{\lambda}(s')+P(s';s,a) \frac{d}{dx} V^{\pi}_{\lambda}(s')\right) \\
   =& \frac{d}{dx}r(s,a) + \gamma \sum_{s'} \left(\frac{d}{dx}P(s';s,a) V^{\pi}_{\lambda}(s')+P(s';s,a) \sum_{a'} \pi(a';s')\frac{d}{dx} Q^{\pi}_{\lambda}(s',a')\right) \\
   =& \sum_{t=0}^0 \sum_{s',a'} \gamma^t p(s,a \rightarrow s',a';t,\pi) \left(\frac{d r(s',a') }{dx}+ \gamma \sum_{s''} \frac{d P(s'';s',a') }{dx}V^{\pi}_{\lambda}(s'') \right)\\
    & +\gamma^{1} \sum_{\Tilde{s},\Tilde{a}} p(s,a \rightarrow \Tilde{s},\Tilde{a};1,\pi) \frac{d{Q^{\pi}_{\lambda}(\Tilde{s},\Tilde{a})}}{dx}.
    \end{align*}
    
We use the definition of $Q^{\pi}_{\lambda}(s,a)$ in the first equality. The second follows by the product rule. The third equality follows by the definition of the value function. The last equality comes from rearranging terms.
\paragraph{Induction step $(n\implies n+1)$}
Assuming \cref{eq:pgforinduction} holds for $n$ we show it holds for $n+1$:
\begin{align*}
    \frac{d{Q^{\pi}_{\lambda}(s,a)}}{dx}  =& \sum_{t=0}^n \sum_{s',a'} \gamma^t p(s,a \rightarrow s',a';t,\pi) \left(\frac{d r(s',a') }{dx}+ \gamma \sum_{s''} \frac{d P(s'';s',a') }{dx}V^{\pi}_{\lambda}(s'') \right)\\
    &+\gamma^{n+1} \sum_{\Tilde{s},\Tilde{a}} p(s,a \rightarrow \Tilde{s},\Tilde{a};n+1,\pi) \frac{d{Q^{\pi}_{\lambda}(\Tilde{s},\Tilde{a})}}{dx}\\
    =& \sum_{t=0}^n \sum_{s',a'} \gamma^t p(s,a \rightarrow s',a';t,\pi) \left(\frac{d r(s',a') }{dx}+ \gamma \sum_{s''} \frac{d P(s'';s',a') }{dx}V^{\pi}_{\lambda}(s'') \right)\\
    &+\gamma^{n+1} \sum_{\Tilde{s},\Tilde{a}} p(s,a \rightarrow \Tilde{s},\Tilde{a};n+1,\pi) \frac{d}{dx}\left(r(\Tilde{s},\Tilde{a})+ \gamma \sum_{\tilde{s}'} P(\Tilde{s}';\Tilde{s},\Tilde{a}) V_\lambda(\Tilde{s}')\right)\\
    =& \sum_{t=0}^n \sum_{s',a'} \gamma^t p(s,a \rightarrow s',a';t,\pi) \left(\frac{d r(s',a') }{dx}+ \gamma \sum_{s''} \frac{d P(s'';s',a') }{dx}V^{\pi}_{\lambda}(s'') \right)\\
    &+\gamma^{n+1} \sum_{\Tilde{s},\Tilde{a}} p(s,a \rightarrow \Tilde{s},\Tilde{a};n+1,\pi) \Biggl(\frac{d}{dx}r(\Tilde{s},\Tilde{a})+ \gamma \sum_{\tilde{s}'} \frac{d}{dx}P(\Tilde{s}';\Tilde{s},\Tilde{a})V_\lambda(\Tilde{s}')\\
&+P(\Tilde{s}';\Tilde{s},\Tilde{a})\sum_{\tilde{a}'}\pi(\Tilde{a}';\Tilde{s}')\frac{d}{dx}Q_{\lambda}(\Tilde{s}',\Tilde{a}')\Biggr)\\
=&\sum_{t=0}^{n+1} \sum_{s',a'} \gamma^t p(s,a \rightarrow s',a';t,\pi) \left(\frac{d r(s',a') }{dx}+ \gamma \sum_{s''} \frac{d P(s'';s',a') }{dx}V^{\pi}_{\lambda}(s'') \right)\\
    &+\gamma^{n+2} \sum_{\Tilde{s},\Tilde{a}} p(s,a \rightarrow \Tilde{s},\Tilde{a};n+2,\pi) \frac{d{Q^{\pi}_{\lambda}(\Tilde{s},\Tilde{a})}}{dx}.\\
\end{align*}
The first equality is simply the base case. The second equality follows from the definition of the Q-function. The third equality follows from the multivariate chain rule and the definition of the value function and the last equality is again rearranging terms.
\end{proof}

\subsection{Proof of Theorem \texorpdfstring{\ref{thm:convergence}}{}}
\begin{proof}
By the smoothness of $f$, we use the following bound from \cite{hu2024contextual}[Lemma 1] for $\alpha\leq 1/(2S_f)$:
\begin{align}
    \EE\left[\norm{{\frac{dF(\hat{x}_T)}{dx}}}^2_\infty\right] 
    &\leq \underbrace{\frac{2 (F(x_1)-\min_x F(x))}{\alpha T}}_{\textcolor[RGB]{0,100,0}{\textbf{(1)}}}+\frac{2}{T} \sum_{t=1}^T \Biggl(L_f \underbrace{\norm{\EE\left[\frac{dF(x_t)}{dx}  - \widehat{\frac{dF(x_t)}{dx}}\right]}_\infty}_{\textcolor{red}{\textbf{(2)}}} \label{eq:graddecompositionbound}\\
    &\phantom{\leq} +S_f\alpha \underbrace{\EE\left[\norm{\frac{dF(x_t)}{dx} - \widehat{\frac{dF(x_t)}{dx}}}^2_\infty \right]}_{\textcolor{blue}{\textbf{(3)}}}\Biggr) \notag.
\end{align}

The error term naturally decomposes into an initial error divided by $T$ {\textcolor[RGB]{0,100,0}{\textbf{(1)}}}, a bias term {\textcolor{red}{\textbf{(2)}}}, and a variance term {\textcolor{blue}{\textbf{(3)}}}, which decreases with the stepsize $\alpha$.

For {\textcolor[RGB]{0,100,0}{\textbf{(1)}}} we do not need to simplify any further.

To prove our claim, we need to show that {\textcolor{blue}{\textbf{(3)}}} is $\cO(1)$, and that {\textcolor{red}{\textbf{(2)}}}  is $\cO(\delta)$. The latter is the main challenge of this proof. 

Let us begin by bounding the bias term {\textcolor{red}{\textbf{(2)}}}. The goal is to show that it can be upper bounded by a sum of terms, which are all linear in $\norm{\pi^o_{x,\xi}-\pi^*_{x,\xi}}_\infty$. 

\begin{align*}
    &\norm{\EE\left[\frac{dF(x_t)}{dx}  - \widehat{\frac{dF(x_t)}{dx}}\right]}_\infty \\
    =& \norm{\EE_{x_t}\left[ \frac{dF(x_t)}{dx} - \EE_{\xi,o}\left[ \frac{\partial_1 f(x_t, \pi^o_{x_t,\xi},\xi)}{\partial x}  + \EE^{\nu}_{a\sim \pi^o_{x_t,\xi}}\left[\frac{1}{\lambda \nu(s)} \frac{\partial_2 f(x_t, \pi^o_{x_t,\xi},\xi))}{\partial \pi(s,a)} \EE\left[\widehat{\frac{d}{dx} A^{\pi^o_{x_t,\xi}}_{\lambda,x,\xi}}(s,a)\right]\right] \right]\right]}_\infty \\
    \leq& \underbrace{\norm{\EE_{x_t,o,\xi}\left[\frac{\partial_1 f(x_t, \pi^*_{x_t,\xi},\xi)}{\partial x}-\frac{\partial_1 f(x_t, \pi^o_{x_t,\xi},\xi)}{\partial x}\right]}_\infty}_{\textcolor{cyan}{\textbf{(A)}}} \\
    &+ \underbrace{\norm{\EE_{x_t,o}^{\xi,\nu}\left[ \frac{1}{\lambda \nu(s)} \sum_{a} \left( \pi^*_{x,\xi}(a;s) \frac{\partial_2 f(x, \pi^*_{x,\xi},\xi)}{\partial \pi^*_{x,\xi}(a;s)}   \partial_x{A^{\pi^*_{x,\xi}}_{\lambda,x,\xi}(s,a)} - \pi^o_{x,\xi}(a;s)\frac{\partial_2 f(x, \pi^o_{x_t,\xi},\xi)}{\partial \pi^o_{x_t,\xi}(a;s)}   \partial_x{A^{\pi^o_{x_t,\xi}}_{\lambda,x,\xi}(s,a)} \right)\right] }_{\infty}}_{\textcolor{magenta}{\textbf{(B)}}},
\end{align*}
where the first equality is by definition and the first inequality follows from the trinalge inequality and we further use the fact that $\widehat{\frac{d}{dx} A^{\pi^o_{x_t,\xi}}_{\lambda,x,\xi}}$ is an unbiased estimator of ${\frac{d}{dx} A^{\pi^o_{x_t,\xi}}_{\lambda,x,\xi}}$ as shown in  \Cref{prop:unbiasedgradient}.

\textbf{\textcolor{cyan}{\textbf{{(A)}}}} is relatively easy to bound. Indeed by the smoothness of $f$ ( \Cref{assum:theory}), it immediately follows that
\begin{equation*}
    {\textcolor{cyan}{\textbf{(A)}}} \leq \EE_{x_t,o,\xi}\left[S_f \norm{\pi^*_{x_t,\xi}-\pi^o_{x_t,\xi}}_\infty\right] \leq S_f \delta.
\end{equation*}

To bound {\textcolor{magenta}{\textbf{(B)}}} we again use the triangle inequality to decompose:
\begin{align*}
   {\textcolor{magenta}{\textbf{(B)}}}=& \norm{\EE_{x_t,o}^{\xi,\nu}\left[ \frac{1}{\lambda \nu(s)} \sum_{a} \left( \pi^*_{x,\xi}(a;s) \frac{\partial_2 f(x, \pi^*_{x,\xi},\xi)}{\partial \pi^*_{x,\xi}(a;s)}   \partial_x{A^{\pi^*_{x,\xi}}_{\lambda,x,\xi}(s,a)} - \pi^o_{x,\xi}(a;s)\frac{\partial_2 f(x, \pi^o_{x_t,\xi},\xi)}{\partial \pi^o_{x_t,\xi}(a;s)}   \partial_x{A^{\pi^o_{x_t,\xi}}_{\lambda,x,\xi}(s,a)} \right)\right] }_{\infty}\\
    \leq& \underbrace{\EE_{x_t,o}^{\xi,\nu}\Biggl[ \frac{1}{\lambda \nu(s)} \sum_{a}  \norm{\pi^*_{x,\xi}(a;s)-\pi^o_{x,\xi}(a;s)}_\infty \norm{\frac{\partial_2 f(x, \pi^*_{x,\xi},\xi)}{\partial \pi^*_{x,\xi}(a;s)}   \partial_x{A^{\pi^*_{x,\xi}}_{\lambda,x,\xi}(s,a)}}_{\infty}\Biggr]}_{\textcolor{lime}{\textbf{(a)}}} \\
    &+ \underbrace{\EE_{x_t,o}^{\xi,\nu}\Biggl[ \frac{1}{\lambda \nu(s)} \sum_a \norm{\pi^o_{x,\xi}(a;s)}_{\infty} \norm{\frac{\partial_2 f(x, \pi^*_{x_t,\xi},\xi)}{\partial \pi^*_{x_t,\xi}(a;s)}   \partial_x{A^{\pi^*_{x_t,\xi}}_{\lambda,x,\xi}(s,a)} -\frac{\partial_2 f(x, \pi^o_{x_t,\xi},\xi)}{\partial \pi^o_{x_t,\xi}(a;s)}   \partial_x{A^{\pi^o_{x_t,\xi}}_{\lambda,x,\xi}(s,a)} }_{\infty}\Biggr]}_{\textcolor{orange}{\textbf{(b)}}}.
\end{align*}

\textcolor{lime}{\textbf{(a)}} is relatively easy to bound. We have
\begin{align*}
    \textcolor{lime}{\textbf{(a)}} 
&\leq 
    \EE_{x_t}^{\xi,\nu}\Biggl[ \frac{1}{\lambda \nu(s)} |\mathcal{A}|  \delta \norm{\frac{\partial_2 f(x, \pi^*_{x,\xi},\xi)}{\partial \pi^*_{x,\xi}(a;s)}   \partial_x{A^{\pi^*_{x,\xi}}_{\lambda,x,\xi}(s,a)}}_{\infty}\Biggr]\\
&\leq 
    \EE_{x_t}^{\xi,\nu}\Biggl[ \frac{1}{\lambda \nu(s)} |\mathcal{A}|  \delta L_f \norm{ \partial_x{A^{\pi^*_{x,\xi}}_{\lambda,x,\xi}(s,a)}}_{\infty}\Biggr]\\
    &\leq \EE_{x_t}^{\xi,\nu}\Biggl[ \frac{2}{\lambda \nu(s)} |\mathcal{A}|  \delta L_f 
\norm{ \partial_x{Q^{\pi^*_{x,\xi}}_{\lambda,x,\xi}(s,a)}}_{\infty}\Biggr],\\
\end{align*}
where we use the assumption on the oracle in the first inequality, that $f$ is Lipschitz continuous in the second inequality, and that $\norm{ \partial_x{V^{\pi^*_{x,\xi}}_{\lambda,x,\xi}(s)}}_{\infty}\leq \norm{ \partial_x{Q^{\pi^*_{x,\xi}}_{\lambda,x,\xi}(s,a)}}_{\infty}$ in the third inequality. Note that: 
\begin{equation*}
    \norm{V^{\pi}_{\lambda,x,\xi}(s)}_{\infty} \leq \frac{(\overline{R}+\lambda \log |\mathcal{A}|)}{1-\gamma},
\end{equation*} 
since the entropy of any policy is bound by $\log|A|$ (which follows from Jensen's inequality).
Using the definition that \[
\partial_x{Q^{\pi^*_{x,\xi}}_{\lambda,x,\xi}(s,a)}=\EE_{s,a}^{\pi^*_{x,\xi}} \left[ \sum_{t=0}^\infty  \gamma^t \frac{d r_{x,\xi}(s_t,a_t)}{dx} + \gamma^{t+1} \frac{d \log P_{x,\xi}(s_{t+1};s_t,a_t)}{dx}V^{\pi}_{\lambda,x,\xi}(s_{t+1})  \right],\] it thus holds that
\begin{equation}
\label{eq:boundqderivative}
    \norm{ \partial_x{Q^{\pi^*_{x,\xi}}_{\lambda,x,\xi}(s,a)}}_{\infty} \leq \left(\frac{K_2}{1-\gamma}+\frac{K_1(\overline{R}+\lambda \log |\mathcal{A}|)}{(1-\gamma)^2}\right).
\end{equation} 
Letting $m:=\min_s \nu(s)$, we thus have 
\begin{align*}
    \textcolor{lime}{\textbf{(a)}}  \leq \frac{2}{\lambda m} |\mathcal{A}|  \delta L_f \left(\frac{K_2}{1-\gamma}+\frac{K_1(\overline{R}+\lambda \log |\mathcal{A}|)}{(1-\gamma)^2}\right).
\end{align*}
For {\textcolor{orange}{\textbf{(b)}}} we further simplify using the triangle inequallity:
\begin{align*}
    {\textcolor{orange}{\textbf{(b)}}}  \leq& \frac{1}{\lambda m} \EE_{x_t,o}^{\xi}\Biggl[  \norm{\frac{\partial_2 f(x, \pi^*_{x_t,\xi},\xi)}{\partial \pi^*_{x_t,\xi}(a;s)}   \partial_x{A^{\pi^*_{x_t,\xi}}_{\lambda,x,\xi}(s,a)} -\frac{\partial_2 f(x, \pi^o_{x_t,\xi},\xi)}{\partial \pi^o_{x_t,\xi}(a;s)}   \partial_x{A^{\pi^o_{x_t,\xi}}_{\lambda,x,\xi}(s,a)} }_{\infty}\Biggr]\\
    \leq& \underbrace{\frac{1}{\lambda m} \EE_{x_t,o}^{\xi}\left[  \norm{\frac{\partial_2 f(x, \pi^*_{x_t,\xi},\xi)}{\partial \pi^*_{x_t,\xi}(a;s)}  -\frac{\partial_2 f(x, \pi^o_{x_t,\xi},\xi)}{\partial \pi^o_{x_t,\xi}(a;s)}}_{\infty} \norm{\partial_x{A^{\pi^*_{x_t,\xi}}_{\lambda,x,\xi}(s,a)} }_{\infty}\right]}_{\textcolor{brown}{\textbf{(i)}}}\\
    &+ \underbrace{\frac{1}{\lambda m} \EE_{x_t,o}^{\xi}\left[\norm{\frac{\partial_2 f(x, \pi^o_{x_t,\xi},\xi)}{\partial \pi^o_{x_t,\xi}(a;s)}   }_\infty \norm{\partial_x{A^{\pi^*_{x_t,\xi}}_{\lambda,x,\xi}(s,a)}-\partial_x{A^{\pi^o_{x_t,\xi}}_{\lambda,x,\xi}(s,a)}  }_{\infty}\right]}_{\textcolor{purple}{\textbf{(ii)}}}.
\end{align*}
Similar to \textcolor{lime}{\textbf{(a)}}, we can bound {\textcolor{brown}{\textbf{(i)}}} using the smoothness of $f$ (Assumption \ref{assum:theory}):
\begin{align*}
    {\textcolor{brown}{\textbf{(i)}}} \leq 2 \frac{S_f}{\lambda m} \delta  \left(\frac{K_2}{1-\gamma}+\frac{K_1(\overline{R}+\lambda \log |\mathcal{A}|)}{(1-\gamma)^2}\right).
\end{align*}

Bounding {\textcolor{purple}{\textbf{(ii)}}} and in particular $\norm{\partial_x{A^{\pi^*_{x_t,\xi}}_{\lambda,x,\xi}(s,a)}-\partial_x{A^{\pi^o_{x_t,\xi}}_{\lambda,x,\xi}(s,a)}  }_{\infty}$  is the tricky part of this proof. We first need to show two intermediate results. First, we bound the difference in entropy between two policies and the difference in the regularized value functions of two policies. Once we have that, we can tackle $\norm{\partial_x{A^{\pi^*_{x_t,\xi}}_{\lambda,x,\xi}(s,a)}-\partial_x{A^{\pi^o_{x_t,\xi}}_{\lambda,x,\xi}(s,a)}  }_{\infty}$. For the entropy, we denote by $l_1 := \min_{s,a,x,\xi} \pi^*_{x,\xi}(a;s)$ the minimum probability of playing an action in any state under the optimal policy. Recall that 
$\forall x,\xi,s,a : |r_{x,\xi}(s,a)|<\overline{R}$ and that $0\leq H(\pi;s)\leq \log|\cA|$ (by Jensen's inequality).
Thus we have
\[
\frac{-\overline{R}}{1-\gamma}\leq Q^*_{\lambda,x,\xi}(s,a)\leq \frac{\overline{R}+\lambda \log |\cA|}{1-\gamma}.
\]
Because $\pi^*_{x,\xi}(s;a)\propto \exp(Q^*_{\lambda,x,\xi}(s,a)/\lambda)$, it follows that
\[
l_1 \geq \frac{\exp(\frac{-\overline{R}}{\lambda(1-\gamma)})}{|\mathcal{A}|\exp(\frac{\overline{R}+\lambda \log|\cA|}{\lambda(1-\gamma)})}>0.
\]
We assume now that $\delta$ is sufficiently small, i.e. $\delta \leq l_1/2$, such that $l_1/2 \leq \min_{s,a,x,\xi} \pi^o_{x,\xi}(a;s)$. We need to have such a lower bound on the policies, touse the fact that the $\log$ function is Lipschitz continuous with parameter $\frac{1}{a}$ on an interval $[a,\infty)$ for any $a>0$. Hence we have 
\begin{align*}
    \norm{H(\pi^*_{x,\xi}|s)-H(\pi^o_{x,\xi}|s)}_{\infty}    &= \norm{\sum_a \pi^*_{x,\xi}(a;s)\log  \pi^*_{x,\xi}(a;s) - \sum_a \pi^o_{x,\xi}(a;s)\log  \pi^o_{x,\xi}(a;s)}_\infty\\
    &\leq \left(\sum_a \norm{\pi^*_{x,\xi} - \pi^o_{x,\xi}}_\infty \norm{\log \pi^*_{x,\xi}}_\infty\right) + \norm{\log \pi^*_{x,\xi}-\log \pi^o_{x,\xi}}_\infty \\
    &\leq |\mathcal{A}||\log l_1| \delta + \frac{2}{l_1}\delta.
\end{align*}

We use this to bound the difference in the value functions of $\pi^*_{x,\xi}$ and $\pi^o_{x,\xi}$.
\begin{equation}
\label{eq:bound_v_difference}
\begin{aligned}
    &\norm{V_\lambda^{\pi^*_{x,\xi}}(s)-V_{\lambda}^{\pi^o_{x,\xi}}(s)}_{\infty} \\
    \leq&
    \norm{\sum_a\left(\pi^*_{x,\xi}(a;s)Q_\lambda^{\pi^*_{x,\xi}}(s,a)  - \pi^o_{x,\xi}(a;s)Q_\lambda^{\pi^o_{x,\xi}}(s,a)  \right)}_{\infty} + \lambda \norm{H(\pi^*_{x,\xi}|s)-H(\pi^o_{x,\xi}|s)}_{\infty}\\
    \leq& \norm{\sum_a\left(\pi^*_{x,\xi}(a;s)Q_\lambda^{\pi^*_{x,\xi}}(s,a)  - \pi^o_{x,\xi}(a;s)Q_\lambda^{\pi^o_{x,\xi}}(s,a)  \right)}_{\infty} +\lambda \delta\left(|\mathcal{A}||\log l_1|  + \frac{2}{l_1}\right)\\
    \leq& \norm{\sum_a\pi^*_{x,\xi}(a;s)}_\infty\norm{Q_\lambda^{\pi^*_{x,\xi}}(s,a)  - Q_\lambda^{\pi^o_{x,\xi}}(s,a)  }_{\infty} \\&+ \sum_a \norm{\pi^*_{x,\xi}(a;s)-\pi^o_{x,\xi}(a;s)}_\infty \norm{Q_\lambda^{\pi^o_{x,\xi}}(s,a) }_\infty + \lambda \delta\left(|\mathcal{A}||\log l_1|  + \frac{2}{l_1}\right)\\
   \leq & \lambda \delta\left(|\mathcal{A}||\log l_1|  + \frac{2}{l_1}\right)+ \delta |\mathcal{A}|\frac{\overline{R}}{1-\gamma} + \gamma \norm{V_\lambda^{\pi^*_{x,\xi}}(s)  - V_\lambda^{\pi^o_{x,\xi}}(s)  }_{\infty}\\
   \leq& \frac{\lambda \delta\left(|\mathcal{A}||\log l_1|  + \frac{2}{l_1}\right)}{1-\gamma}+ \frac{\delta |\mathcal{A}|\overline{R}}{(1-\gamma)^2} .
\end{aligned}
\end{equation}
The first inequality follows from the definition of the regularized value function and the triangle inequality. The second inequality follows from our derived bound on the difference of entropies. The third  inequality is agian using the triangle inequality and the fourth inequality is bounding the Q-function using that the rewards are upper bounded by $\overline{R}$ and that $ Q^\pi_{\lambda,x,\xi}(s,a)=r_{x,\xi}(s,a)+\gamma\EE_{s'\sim P_{x,\xi}(\cdot;s,a)}\sbr{V^\pi_{\lambda,x,\xi}(s')}$. The last inequality then follows by iteratively plugging in the inequality for the term $\norm{V_\lambda^{\pi^*_{x,\xi}}(s)  - V_\lambda^{\pi^o_{x,\xi}}(s)  }_{\infty}$, which gives a geometric sum.
We employ a similar technique to bound {\textcolor{purple}{\textbf{(ii)}}} using the above results:

\begin{align*}
    &\frac{1}{\lambda m} \EE_{x_t,o}^{\xi}\left[\norm{\frac{\partial_2 f(x, \pi^o_{x_t,\xi},\xi)}{\partial \pi^o_{x_t,\xi}(a;s)}   }_\infty \norm{\partial_x{A^{\pi^*_{x_t,\xi}}_{\lambda,x,\xi}(s,a)}-\partial_x{A^{\pi^o_{x_t,\xi}}_{\lambda,x,\xi}(s,a)}  }_{\infty}\right]\\
    \leq& \frac{L_f}{\lambda m} \EE_{x_t,o}^{\xi}\left[\norm{\partial_x{A^{\pi^*_{x_t,\xi}}_{\lambda,x,\xi}(s,a)}-\partial_x{A^{\pi^o_{x_t,\xi}}_{\lambda,x,\xi}(s,a)}  }_{\infty}\right]\\
    \leq& \frac{L_f}{\lambda m} 2 \EE_{x_t,o}^{\xi}\left[ \norm{\partial_x{Q^{\pi^*_{x_t,\xi}}_{\lambda,x,\xi}(s,a)}-\partial_x{Q^{\pi^o_{x_t,\xi}}_{\lambda,x,\xi}(s,a)}  }_{\infty}+|\cA|\left(\frac{K_2}{1-\gamma}+\frac{K_1(\overline{R}+\lambda \log |\mathcal{A}|)}{(1-\gamma)^2}\right)\norm{\pi^o_{x,\xi}-\pi^*_{x,\xi}}_\infty\right].
\end{align*}
Here the first inequality follows from the Lipschitz continuity of $f$. For second inequality we use the definition of the advantage function, the triangle inequality and the following inequality:
\begin{equation}
\label{eq:diffvsmallerdiffq}
    \begin{aligned}
        &\norm{\partial_x V^{\pi^o_{x_t,\xi}}_{\lambda,x,\xi}(s)-\partial_x{V^{\pi^*_{x_t,\xi}}_{\lambda,x,\xi}(s)}}_\infty\\
        =&\norm{\sum_a \pi^o_{x_t,\xi}(a;s) \partial_x  Q^{\pi^o_{x_t,\xi}}_{\lambda,x,\xi}(s,a)-\sum_a \pi^*_{x_t,\xi}(a;s) \partial_x{Q^{\pi^*_{x_t,\xi}}_{\lambda,x,\xi}(s,a)}}_\infty\\
        =& \norm{\sum_a \left(\pi^o_{x_t,\xi}(a;s)-\pi^*_{x_t,\xi}(a;s)\right) \partial_x  Q^{\pi^o_{x_t,\xi}}_{\lambda,x,\xi}(s,a)-\sum_a \pi^*_{x_t,\xi}(a;s) \left(\partial_x{Q^{\pi^*_{x_t,\xi}}_{\lambda,x,\xi}(s,a)}-\partial_x{Q^{\pi^o_{x_t,\xi}}_{\lambda,x,\xi}(s,a)}\right)}_\infty\\
        \leq &|\mathcal{A}|\frac{K_1(\overline{R}+\lambda \log |\mathcal{A}|)}{(1-\gamma)^2} \norm{\pi^o_{x_t,\xi}-\pi^*_{x_t,\xi}}_\infty+\norm{\partial_x{Q^{\pi^*_{x_t,\xi}}_{\lambda,x,\xi}(s,a)}-\partial_x{Q^{\pi^o_{x_t,\xi}}_{\lambda,x,\xi}(s,a)}}_\infty
        ,
    \end{aligned}
\end{equation}
where the last inequality follows from the trianlge inequality and \Cref{eq:boundqderivative}.

We bound the difference in Q-function derivatives as follows: 
\begin{align*}
    & \norm{\partial_x{Q^{\pi^*_{x_t,\xi}}_{\lambda,x,\xi}(s,a)}-\partial_x{Q^{\pi^o_{x_t,\xi}}_{\lambda,x,\xi}(s,a)}  }_{\infty} \\
    =&\Bigg|\Bigg|\sum_{t=0}^\infty \sum_{s',a'} \gamma^t p_{x,\xi}(s,a \rightarrow s',a';t,\pi^*_{x_t,\xi}) \left(\frac{d r_{x,\xi}(s',a') }{dx}+\gamma \sum_{s''} \frac{d P_{x,\xi}(s'';s',a') }{dx}V^{\pi^*_{x_t,\xi}}_{\lambda,x,\xi}(s'') \right) \\
    &- \sum_{t=0}^\infty \sum_{s',a'} \gamma^t p_{x,\xi}(s,a \rightarrow s',a';t,\pi^o_{x_t,\xi}) \left(\frac{d r_{x,\xi}(s',a') }{dx}+\gamma \sum_{s''} \frac{d P_{x,\xi}(s'';s',a') }{dx}V^{\pi^o_{x_t,\xi}}_{\lambda,x,\xi}(s'') \right) \Bigg|\Bigg|_{\infty}  \\
        =&\Bigg|\Bigg|\frac{d r_{x,\xi}(s,a) }{dx} + \gamma \sum_{s'} \frac{d P_{x,\xi}(s';s,a) }{dx}V^{\pi^*_{x_t,\xi}}_{\lambda,x,\xi}(s')\\
        &+ \sum_{t=1}^\infty \sum_{s',a'} \gamma^t p_{x,\xi}(s,a \rightarrow s',a';t,\pi^*_{x_t,\xi}) \left(\frac{d r_{x,\xi}(s',a') }{dx}+\gamma \sum_{s''} \frac{d P_{x,\xi}(s'';s',a') }{dx}V^{\pi^*_{x_t,\xi}}_{\lambda,x,\xi}(s'') \right) \\
        &- \frac{d r_{x,\xi}(s,a) }{dx} - \gamma \sum_{s'} \frac{d P_{x,\xi}(s';s,a) }{dx}V^{\pi^o_{x_t,\xi}}_{\lambda,x,\xi}(s')\\
    &- \sum_{t=1}^\infty \sum_{s',a'} \gamma^t p_{x,\xi}(s,a \rightarrow s',a';t,\pi^o_{x_t,\xi}) \left(\frac{d r_{x,\xi}(s',a') }{dx}+\gamma \sum_{s''} \frac{d P_{x,\xi}(s'';s',a') }{dx}V^{\pi^o_{x_t,\xi}}_{\lambda,x,\xi}(s'') \right) \Bigg|\Bigg|_{\infty}  \\
    \leq& \gamma \sum_{s'} \norm{\frac{d P_{x,\xi}(s';s,a) }{dx}}_\infty \norm{V^{\pi^o_{x_t,\xi}}_{\lambda,x,\xi}(s')-V^{\pi^o_{x_t,\xi}}_{\lambda,x,\xi}(s') }_{\infty}\\
    &+ \gamma \Bigg|\Bigg| \sum_{s',a'}  P(s';s,a)  \pi^*_{x_t,\xi}(a',s') \sum_{t=0}^\infty \sum_{s',a'} \gamma^t p_{x,\xi}(s',a' \rightarrow s'',a'';t,\pi^*_{x_t,\xi}) \dots \\
    & \left(\frac{d r_{x,\xi}(s'',a'') }{dx}+\gamma \sum_{s'''} \frac{d P_{x,\xi}(s''';s'',a'') }{dx}V^{\pi^*_{x_t,\xi}}_{\lambda,x,\xi}(s'') \right)\\
    &-   \sum_{s',a'}  P(s';s,a)  \pi^o_{x_t,\xi}(a',s') \sum_{t=0}^\infty \sum_{s',a'} \gamma^t p_{x,\xi}(s',a' \rightarrow s'',a'';t,\pi^o_{x_t,\xi}) \dots \\
    & \left(\frac{d r_{x,\xi}(s'',a'') }{dx}+\gamma \sum_{s'''} \frac{d P_{x,\xi}(s''';s'',a'') }{dx}V^{\pi^o_{x_t,\xi}}_{\lambda,x,\xi}(s'') \right) \Bigg|\Bigg|_{\infty}\\
\leq& \gamma \sum_{s'} \norm{\frac{d P_{x,\xi}(s';s,a) }{dx}}_\infty \norm{V^{\pi^o_{x_t,\xi}}_{\lambda,x,\xi}(s')-V^{\pi^o_{x_t,\xi}}_{\lambda,x,\xi}(s') }_{\infty}\\
    &+ \gamma \Bigg|\Bigg| \sum_{s',a'}  P(s';s,a)  \pi^*_{x_t,\xi}(a',s')  \partial_x{Q^{\pi^*_{x_t,\xi}}_{\lambda,x,\xi}(s,a)}\\
    &-   \sum_{s',a'}  P(s';s,a)  \pi^o_{x_t,\xi}(a',s')  \partial_x{Q^{\pi^o_{x_t,\xi}}_{\lambda,x,\xi}(s,a)} \Bigg|\Bigg|_{\infty}\\    
    \leq& \gamma \sum_{s'} \norm{\frac{d P_{x,\xi}(s';s,a) }{dx}}_\infty \norm{V^{\pi^o_{x_t,\xi}}_{\lambda,x,\xi}(s')-V^{\pi^o_{x_t,\xi}}_{\lambda,x,\xi}(s') }_{\infty}\\
    &+\gamma \norm{\partial_xQ^{\pi^*_{x_t,\xi}}_{\lambda,x,\xi}(s',a')}_{\infty}\norm{\sum_{s',a'}  P(s';s,a) }_\infty \norm{ \pi^*_{x_t,\xi}(a',s')-  \pi^o_{x_t,\xi}(a',s')}_\infty\\
    &+ \gamma \sum_{s',a'}  P(s';s,a)  \pi^o_{x_t,\xi}(a',s') \norm{\partial_x{Q^{\pi^*_{x_t,\xi}}_{\lambda,x,\xi}(s',a')}-\partial_x{Q^{\pi^o_{x_t,\xi}}_{\lambda,x,\xi}(s',a')}  }_\infty,
    \end{align*}
    where the dots indicate multiplication over the linebreak. The first equality follows from plugging in the result from \Cref{prop:policygradient}. The second equality follows by taking out all terms with $t=0$. The first inequality uses the triangle inequality. The second inequality plugs back in the definition from \Cref{prop:policygradient}. The last inequality follows from the triangle inequality again. 
    
    Taking the expectation, we thus get:
    \begin{align*}
     &\EE_{x_t,o}^{\xi}\left[ \norm{\partial_x{Q^{\pi^*_{x_t,\xi}}_{\lambda,x,\xi}(s,a)}-\partial_x{Q^{\pi^o_{x_t,\xi}}_{\lambda,x,\xi}(s,a)}  }_{\infty}\right]\\
    \leq& \gamma \left( |\mathcal{S}|K_1\left(\frac{\lambda \delta\left(|\mathcal{A}||\log l_1|  + \frac{2}{l_1}\right)}{1-\gamma}+ \frac{\delta |\mathcal{A}|\overline{R}}{(1-\gamma)^2}  \right) + \delta \left(\frac{K_2}{1-\gamma}+\frac{K_1(\overline{R}+\lambda \log |\mathcal{A}|)}{(1-\gamma)^2}\right)\right) \\
    &+\gamma \EE_{x_t,o}^{\xi}\left[ \norm{\partial_x{Q^{\pi^*_{x_t,\xi}}_{\lambda,x,\xi}(s',a')}-\partial_x{Q^{\pi^o_{x_t,\xi}}_{\lambda,x,\xi}(s',a')}  }_\infty \right]\\
    \leq& \frac{\gamma}{1-\gamma} \left( |\mathcal{S}|K_1\left(\frac{\lambda \delta\left(|\mathcal{A}||\log l_1|  + \frac{2}{l_1}\right)}{1-\gamma}+ \frac{\delta |\mathcal{A}|\overline{R}}{(1-\gamma)^2}  \right) + \delta \left(\frac{K_2}{1-\gamma}+\frac{K_1(\overline{R}+\lambda \log |\mathcal{A}|)}{(1-\gamma)^2}\right)\right)\\
    =& \frac{\delta \gamma}{1-\gamma} \left( |\mathcal{S}|K_1\left(\frac{\lambda \left(|\mathcal{A}||\log l_1|  + \frac{2}{l_1}\right)}{1-\gamma}+ \frac{ |\mathcal{A}|\overline{R}}{(1-\gamma)^2}  \right) +  \left(\frac{K_2}{1-\gamma}+\frac{K_1(\overline{R}+\lambda \log |\mathcal{A}|)}{(1-\gamma)^2}\right)\right),
\end{align*}
 where we use the intermediate result from before to bound the difference between the value functions, the upper bound on the Q-function derivative shown in \Cref{eq:boundqderivative} and the assumption on the oracle to get the first inequality. The second inequality follows from the the resulting geometric sum and the last equality is just rearranging terms to show the linearity in $\delta$.

Using this result, we can now bound {\textcolor{purple}{\textbf{(ii)}}}:
\begin{equation*}
\begin{aligned}
       {\textcolor{purple}{\textbf{(ii)}}} \leq&  \frac{2 L_f \delta \gamma}{\lambda m(1-\gamma)} \Bigg( |\mathcal{S}|K_1\left(\frac{\lambda \left(|\mathcal{A}||\log l_1|  + \frac{2}{l_1}\right)}{1-\gamma}+ \frac{ |\mathcal{A}|\overline{R}}{(1-\gamma)^2}  \right) +  \left(\frac{K_2}{1-\gamma}+\frac{K_1(\overline{R}+\lambda \log |\mathcal{A}|)}{(1-\gamma)^2}\right)\\
    &+|\cA|\left(\frac{K_2}{\gamma}+\frac{K_1(\overline{R}+\lambda \log |\mathcal{A}|)}{(1-\gamma)\gamma}\right)\Bigg).
\end{aligned}
\end{equation*}
With that we are done decomposing {\textcolor{red}{\textbf{(2)}}}. Combining everything, we have the following bound:
\begin{align*}
    {\textcolor{red}{\textbf{(2)}}} =&\norm{\EE\left[\frac{dF(x_t)}{dx}  - \widehat{\frac{dF(x_t)}{dx}}\right]}_\infty\\
    \leq &{\textcolor{cyan}{\textbf{(A)}}}+ {\textcolor{magenta}{\textbf{(B)}}} \\
    \leq & {\textcolor{cyan}{\textbf{(A)}}}+ {\textcolor{lime}{\textbf{(a)}}}+{\textcolor{orange}{\textbf{(b)}}}\\
    \leq& {\textcolor{cyan}{\textbf{(A)}}}+ {\textcolor{lime}{\textbf{(a)}}}+{\textcolor{brown}{\textbf{(i)}}}+ {\textcolor{purple}{\textbf{(ii)}}}\\
    \leq&  S_f \delta + \frac{2}{\lambda m} |\mathcal{A}|  \delta L_f \left(\frac{K_2}{1-\gamma}+\frac{K_1(\overline{R}+\lambda \log |\mathcal{A}|)}{(1-\gamma)^2}\right)+ \frac{2S_f}{\lambda m} \delta  \left(\frac{K_2}{1-\gamma}+\frac{K_1(\overline{R}+\lambda \log |\mathcal{A}|)}{(1-\gamma)^2}\right)\\
    &+ \frac{2 L_f \delta \gamma}{\lambda m(1-\gamma)} \Bigg( |\mathcal{S}|K_1\left(\frac{\lambda \left(|\mathcal{A}||\log l_1|  + \frac{2}{l_1}\right)}{1-\gamma}+ \frac{ |\mathcal{A}|\overline{R}}{(1-\gamma)^2}  \right) +  \left(\frac{K_2}{1-\gamma}+\frac{K_1(\overline{R}+\lambda \log |\mathcal{A}|)}{(1-\gamma)^2}\right)\\
    &+|\cA|\left(\frac{K_2}{\gamma}+\frac{K_1(\overline{R}+\lambda \log |\mathcal{A}|)}{(1-\gamma)\gamma}\right)\Bigg)\\
    =& \mathcal{O}(\delta).
\end{align*}

With that we have tackled terms {\textcolor[RGB]{0,100,0}{\textbf{(1)}}} and {\textcolor{red}{\textbf{(2)}}} in \Cref{eq:graddecompositionbound}. It remains to bound the variance, i.e. term {\textcolor{blue}{\textbf{(3)}}}. If we can show that ${\textcolor{blue}{\textbf{(3)}}}=\cO(1)$ then the last term in \Cref{eq:graddecompositionbound} is $\cO(\alpha)$ as claimed and we are done. Indeed, bounding {\textcolor{blue}{\textbf{(3)}}} is relatively easy, as all important terms are bounded by \Cref{assum:theory}. We have:

\begin{align*}
    \EE\left[\norm{\frac{dF(x_t)}{dx} - \widehat{\frac{dF(x_t)}{dx}}}^2_\infty \right]
    \leq& 2 \norm{\frac{dF(x_t)}{dx} - \EE\left[{\widehat{\frac{dF(x_t)}{dx}}}\right]}^2_\infty + 2 \EE\left[\norm{ \widehat{\frac{dF(x_t)}{dx}}-\EE\left[{\widehat{\frac{dF(x_t)}{dx}}}\right]}^2_\infty\right]\\
    \leq &2 \mathcal{O}(\delta^2) + 2 \EE\left[\norm{ \widehat{\frac{dF(x_t)}{dx}}-\EE\left[{\widehat{\frac{dF(x_t)}{dx}}}\right]}^2_\infty\right]\\
    \leq &\mathcal{O}(\delta^2) + 2 \left(\EE\left[\norm{ \widehat{\frac{dF(x_t)}{dx}}}^2_\infty\right] -\norm{\EE\left[{\widehat{\frac{dF(x_t)}{dx}}}\right]}^2_\infty\right)\\
    \leq& \mathcal{O}(\delta^2) + 2 \EE\left[\norm{ \widehat{\frac{dF(x_t)}{dx}}}^2_\infty\right],
\end{align*}
where the third inequality follows directly, the second inequality uses the definition of the variance and the first inequality uses that \[\norm{a+b}^2= \norm{a}^2+2a^\top b+\norm{b}^2\leq\norm{a}^2+2\norm{a} \norm{b}+\norm{b}^2\leq 2\norm{a}+2\norm{b}^2.\]

By the above, it suffices to bound the second moment:
\begin{align*}
    \EE\left[\norm{ \widehat{\frac{dF(x_t)}{dx}}}^2_\infty\right]&\leq  \EE\left[\norm{ \frac{\partial_1 f(x_t, \pi^o_{x_t,\xi},\xi)}{\partial x}  + \frac{1}{\lambda \nu(s)} \frac{\partial_2 f(x_t, \pi^o_{x_t,\xi},\xi))}{\partial \pi(s,a)} \widehat{\partial_x A^{\pi^o_{x_t,\xi}}_{\lambda,x,\xi}}(s,a)}^2_\infty\right]\\
    &\leq  \EE\left[\norm{ L_f  + \frac{1}{\lambda m} L_f  \widehat{\partial_x A^{\pi^o_{x_t,\xi}}_{\lambda,x,\xi}}(s,a)}^2_\infty\right].\\
\end{align*}
To proceed, we upper bound $\EE\left[\norm{\widehat{\partial_x A^{\pi^o_{x_t,\xi}}_{\lambda,x,\xi}}(s,a)}_\infty^2\right]$ by
\begin{equation}
\begin{aligned}
 & \EE\left[\norm{\widehat{\partial_x A^{\pi^o_{x_t,\xi}}_{\lambda,x,\xi}}(s,a)}_\infty^2\right]\\
\leq  & 4\EE\Bigg[\Bigg\|\Bigg(\sum_{t=0}^{T_Q} \frac{d}{dx} r(s^{\tau_Q}_t,a^{\tau_Q}_t)\\
&+\frac{\gamma}{1-\gamma} \frac{d}{dx} \log P(s^{\tau_Q}_{T_Q+1};s^{\tau_Q}_{T_Q},a^{\tau_Q}_{T_Q}) \sum_{t=T_Q+1}^{T_Q+T'_Q+1} \gamma^{(t-T_Q-1)/2} \left(r(s^{\tau_Q}_t,a^{\tau_Q}_t)+\lambda H(\pi(\cdot ;s_t))\right)\Bigg)\Bigg\|_\infty^2\Bigg]\\
\leq & 4\EE\left[\norm{\left(T_Q K_2 +\frac{\gamma}{1-\gamma} K_1 \frac{\overline{R}+\lambda \log |\mathcal{A}|}{1-\gamma^{0.5}}\right)}_\infty^2\right], 
\end{aligned}
\label{eq:boundonadvantageestimate}
\end{equation}
where $T_Q$ is the random variable defined in \Cref{alg:sample_gradient_estimation}. The first inequality uses the definition of the advantage estimate (cf. \Cref{alg:sample_gradient_estimation}), the i.i.d. property of $T_Q$ and $T_V$, as well as of $T_Q'$ and $T_V'$, and \Cref{eq:triangleinequalitysquared}. The second inequality follows from \Cref{assum:theory}.

Plugging in the above, we thus get:
\begin{align*}
    \EE\left[\norm{ \widehat{\frac{dF(x_t)}{dx}}}^2_\infty\right]&=\EE\left[\norm{ L_f  + \frac{1}{\lambda m} L_f  \widehat{\partial_x A^{\pi^o_{x_t,\xi}}_{\lambda,x,\xi}}(s,a)}^2_\infty\right]\\
    &\leq \EE\left[2\norm{ L_f}_\infty^2  +2\left( \frac{L_f}{\lambda m}\right)^2 \norm{\widehat{\partial_x A^{\pi^o_{x_t,\xi}}_{\lambda,x,\xi}}(s,a)}^2_\infty\right]\\
    &\leq 2\EE\left[\norm{ L_f}_\infty^2\right]  +8\left( \frac{L_f}{\lambda m}\right)^2 \EE\left[\norm{\left(T_Q K_2 +\frac{\gamma}{1-\gamma} K_1 \frac{\overline{R}+\lambda \log |\mathcal{A}|}{1-\gamma^{0.5}}\right)}_\infty^2\right]\\
    &\leq2\EE\left[\norm{ L_f}_\infty^2\right]  +16\left( \frac{L_f}{\lambda m}\right)^2 \EE\left[\norm{T_Q K_2}_\infty^2\right]  +16\EE\left[\norm{\frac{\gamma}{1-\gamma} K_1 \frac{\overline{R}+\lambda \log |\mathcal{A}|}{1-\gamma^{0.5}}}_\infty^2\right]\\
    &=\cO(1),
\end{align*}
where we repeatedly apply \Cref{eq:triangleinequalitysquared} and the fact that the second moment of a geometric random variable is finite.

Now we can plug all our bounds back into \Cref{eq:graddecompositionbound} to get the result of \Cref{thm:convergence}.
\begin{align*}
        \EE\left[\norm{{\frac{dF(\hat{x}_T)}{dx}}}^2_\infty\right] 
    &\leq {\textcolor[RGB]{0,100,0}{\textbf{(1)}}}+{\textcolor{red}{\textbf{(2)}}} +{\textcolor{blue}{\textbf{(3)}}}\\
    &\leq \mathcal{O}(\frac{1}{\alpha T})+ \mathcal{O}(\delta) + 2S_f\alpha \left(\mathcal{O}(\delta^2 + 1)\right)\\
    &\leq \mathcal{O}(\frac{1}{\alpha T})+ \mathcal{O}(\delta) + \mathcal{O}(\alpha).
\end{align*}

\end{proof}

\subsection{Proof of \texorpdfstring{\Cref{thm:fasterconv}}{}}
\label{app:prooffaster}

\textbf{\textit{Vanilla soft Q-learning} }

We give a brief overview of how HPGD is combined with vanilla soft Q-learning (\Cref{alg:qlearning}) to get a bias of $\cO(2^{-K/2})$ in \Cref{alg:hpdgvanillaq}. In the algorithm we refer to $t_K=\cO(K2^{K})$ as the number of iterations soft Q-learning needs to achieve $\lambda \EE{\norm{\pi^{t_K}-\pi^*}^2_\infty}\leq \EE{\norm{Q^{t_K}-Q^*_{\lambda}}^2_\infty}\leq 2^{-K}$ (cf. \Cref{prop:qlearningconv}). Note we slightly abuse notation, when we pass a policy instead of an oracle to \Cref{alg:sample_gradient_estimation}. However, the policy can be equivalently used to sample trajectories. Note that the idea of \salgo\ with soft Q-learning is a double-loop methods that aims to find an $\delta$-oracle. Similar a $\delta$-oracle idea also appears in conditional stochastic optimization~\citep{hu2020biased,hu2020sample}, in bilevel optimization~\citep{ghadimi2018approximation}.

\begin{algorithm}
\caption{\salgo\ with vanilla soft Q-learning}
\label{alg:hpdgvanillaq}
\begin{algorithmic}[H]
\STATE \textbf{Input:} Iterations $T$, Precision param. $K$, Learning rate $\alpha$, Regularization $\lambda$, Initial point $x_0$, behavioural policy $\pi_B$, Q-learning rates $\{\alpha_{t}\}_{t\geq 0}$
\FOR{$t = 0$ to $T-1$}
    \STATE $\xi\sim \mathbb{P}_\xi$
    \STATE $\pi^{t_K}\gets \texttt{SoftQlearning}_{x_t,\xi} (t_K,\pi_B,\{\alpha_{t}\}_{t\geq 0})$ (\Cref{alg:qlearning})
    \STATE $ s \sim \nu$ and $a \sim \pi^{t_K}(\cdot;s)$
    \STATE $\widehat{\partial_x A^{\pi^{t_K}}}(s,a) \gets \texttt{GradientEstimator}(\xi,x_t,s,a,\pi^{t_K})$  (\Cref{alg:sample_gradient_estimation})
    \STATE $\widehat{\frac{dF}{dx}} \gets \frac{\partial_1 f(x_t, \pi^{t_K},\xi)}{\partial x}  + \frac{1}{\lambda \nu(s)} \frac{\partial_2 f(x_t, \pi^{t_K},\xi)}{\partial \pi(s,a)} \widehat{\partial_x A^{\pi^{t_K}}}(s,a)$
    \STATE $x_{t+1}\gets x_t-\alpha \widehat{\frac{dF}{dx}}$
\ENDFOR
\STATE \textbf{Output:} $\hat{x}_T \sim \textrm{Uniform}(\{x_0,\dots,x_{T-1}\})$
\end{algorithmic}
\end{algorithm}

\textbf{\textit{Randomly-Truncated soft Q-learning (RT-Q)}} 

Let us now turn to RT-Q, for which we provide the pseudocode in \Cref{alg:hpgdrtq}. As above, we denote by $t_k = \cO(k2^{k})$ the number of iterations soft Q-learning needs to achieve $\lambda \EE{\norm{\pi^{t_k}-\pi^*}^2_\infty}\leq \EE{\norm{Q^{t_k}-Q^*_{\lambda}}^2_\infty}\leq 2^{-k}$. We slightly abuse notation in the Pseudocode, such that we do not just return the last soft Q-learning iteration but also the second last and the first. Moreover we denote by $p_k=\frac{2^{-k}}{1-2^{-K}}$ and we generally use $\widehat{x}$ to denote estimates from a single sample and $\widetilde{x}$ to denote averaged estimates from multiple samples. %

\begin{algorithm}
\caption{\salgo\ with RT-Q}
\label{alg:hpgdrtq}
\begin{algorithmic}[H]
\STATE \textbf{Input:} Iterations $T$, Precision param. $K$, Learning rate $\alpha$, Regularization $\lambda$, Initial point $x_0$, behavioural policy $\pi_B$, Q-learning rates $\{\alpha_{t}\}_{t\geq 0}$
\FOR{$t = 0$ to $T-1$}
    \STATE $\xi\sim \mathbb{P}_\xi$
    \STATE $k \sim p_k $
    \STATE $\pi^{t_{k+1}},\pi^{t_{k}},\pi^{t_{1}}\gets \texttt{SoftQlearning}_{x_t,\xi} (t_{k+1},\pi_B,\{\alpha_{t}\}_{t\geq 0})$ (\Cref{alg:qlearning})
    \STATE $ s \sim \nu$, $a \sim \pi^{t_{k+1}}(\cdot;s)$ and $a'\sim \pi^{t_1}(\cdot;s)$
\STATE $\widetilde{ \partial_x A^{\pi^{t_k}}}(s,a)\gets\frac{1}{2^{k}}\sum_{l=1}^{2^{k}}  \texttt{GradientEstimator} (\xi,x_t,s,a,\pi^{t_k})$(\Cref{alg:sample_gradient_estimation})
    \STATE $\widetilde{ \partial_x A^{\pi^{t_{k+1}}}}(s,a)\gets\frac{1}{2^{k}}\sum_{l=1}^{2^{k}}  \texttt{GradientEstimator}(\xi,x_t,s,a,\pi^{t_{k+1}})$
        \STATE $\widehat{ \partial_x A^{\pi^{t_{1}}}}(s,a')\gets \texttt{GradientEstimator}(\xi,x_t,s,a',\pi^{t_{1}})$
    \STATE  $\widetilde{\frac{dF_{t_{k+1}}}{dx}} \gets \frac{\partial_1 f(x, \pi^{t_{k+1}},\xi)}{\partial x}  + \frac{1}{\lambda \nu(s)} \frac{\partial_2 f(x, \pi^{t_{k+1}},\xi))}{\partial \pi(s,a)}\widetilde{ \partial_x A^{\pi^{t_{k+1}}}}(s,a)$
    \STATE $ \widetilde{\frac{dF^{t_{k+1}}_{t_{k}}}{dx}} = \frac{\partial_1 f(x, \pi^{t_{k}},\xi)}{\partial x}  + \frac{\pi^{t_k}(a;s)}{\pi^{t_{k+1}}(a;s)}\frac{1}{\lambda \nu(s)} \frac{\partial_2 f(x, \pi^{t_{k}},\xi))}{\partial \pi(s,a)} 
   \widetilde{ \partial_x A^{\pi^{t_{k}}}}(s,a)$
    \STATE 
    $ \widehat{\frac{dF_{t_{1}}}{dx}} = \frac{\partial_1 f(x, \pi^{t_{1}},\xi)}{\partial x}  + \frac{1}{\lambda \nu(s)} \frac{\partial_2 f(x, \pi^{t_{1}},\xi))}{\partial \pi(s,a')} \widehat{ \partial_x A^{\pi^{t_{1}}}}(s,a')$
    \STATE ${\frac{dF^{RT}_{t_K}}{dx}}=\widehat{\frac{dF_{t_{1}}}{dx}}+\frac{ \widetilde{\frac{dF_{t_{k+1}}}{dx}}-\widetilde{\frac{dF^{t_{k+1}}_{t_{k}}}{dx}}}{p_k}$
    \STATE $x_{t+1}\gets x_t-\alpha \frac{dF^{RT}_{t_K}}{dx}$
\ENDFOR
\STATE \textbf{Output:} $\hat{x}_T \sim \textrm{Uniform}(\{x_0,\dots,x_{T-1}\})$
\end{algorithmic}
\end{algorithm}

\begin{proof}

First let us specify that “bias” refers to the bias of the hypergradient estimator, i.e. \begin{equation*}
    \norm{\EE\left[\frac{dF(x_t)}{dx}  - \widehat{\frac{dF(x_t)}{dx}}\right]}_\infty.
\end{equation*}
Equivalently for “variance” we mean the variance of the estimator i.e.
\begin{equation*}
     \EE\left[\norm{\widehat{\frac{dF(x_t)}{dx}}-\EE\left[\widehat{\frac{dF(x_t)}{dx}}\right]}_\infty^2\right],
\end{equation*}

Moreover, the “iteration complexity” is the number of soft Q-learning iterations needed to solve the lower level.

For Vanilla soft Q-learning, we can just combine previous results to compute the iteration complexity and variance to achieve a bias of $2^{-K/2}$. For RT-Q, we formalize the intuition of \Cref{sec:full_access} to show we can achieve the same bias with only $\cO(K^2)$ iterations because we rarely perform many soft Q-learning iterations but assign a relatively higher magnitude to these few accurate hypergradient estimates. This necessarily increases variance and most of the proof will be spent on how to bound it. Here we “divide and conquer” the variance until we have easy terms that depend linearly on 
\begin{equation*}
     \EE\left[\norm{\pi^{t_k}_{x,\xi}-\pi^*_{x,\xi}}^2_\infty\right] = \cO(2^{-k})
\end{equation*}
or related terms we can easily bound. To decompose the variance, our main tool will be the following identity
\begin{equation}
\label{eq:triangleinequalitysquared}
    \norm{a+b}^2= \norm{a}^2+2\norm{a}\norm{b}+\norm{b}^2\leq2\norm{a}+2\norm{b}^2,
\end{equation}
which we have already derived and used in the proof of \Cref{thm:convergence} and also follows from the Parallelogram Law.

\textbf{\textit{Vanilla soft Q-learning} }

We start with the analysis of using HPGD with vanilla soft Q-learning to estimate $\frac{dF(x)}{dx}$. 
In \Cref{thm:convergence}, we showed that 
\begin{equation*}
    \norm{\EE\left[\frac{dF(x_t)}{dx}  - \widehat{\frac{dF(x_t)}{dx}}\right]}_\infty=\cO(\delta),
\end{equation*}
where
\begin{equation*}
    \EE_{o}\left[\norm{\pi^*_{x,\xi}-\pi^o_{x,\xi}}^2_{\infty}\right]\leq \delta^2.
\end{equation*}
In \Cref{prop:qlearningconv}, we show that after $t_K=\cO(K2^K )$ soft Q-learning iterations, it holds that
\begin{equation*}
    \EE\left[\norm{\pi^{t_K}_{x,\xi}-\pi^*_{x,\xi}}^2_\infty\right] = \cO(2^{-K}), 
\end{equation*}
where $\pi^{t_K}_{x,\xi}$ denotes the $t_K$-th iterate of the soft Q-learning algorithm. The results for complexity and bias follow directly.
It remains to bound the variance. However, we have already shown in \Cref{thm:convergence} that
\begin{align*}
    \EE\left[\norm{ \widehat{\frac{dF_{t_K}}{dx}}-\EE\left[{\widehat{\frac{dF_{t_K}}{dx}}}\right]}^2_\infty\right]
    \leq \EE\left[\norm{ \widehat{\frac{dF(x_t)}{dx}}}^2_\infty\right]=\cO(1).
\end{align*}

\textbf{\textit{Randomly-Truncated soft Q-learning (RT-Q)}} 

We first show that ${\frac{dF^{RT}_{t_K}}{dx}}$ has the same bias as its vanilla counterpart $\widehat{\frac{dF_{t_K}}{dx}}$. For this, observe that the following estimators have the same mean:
\begin{align*}
   \forall k:  \EE\left[\widetilde{\frac{dF^{t_{k+1}}_{t_{k}}}{dx}}\right]=\EE\left[\widetilde{\frac{dF_{t_{k}}}{dx}}\right]=\EE\left[\widehat{\frac{dF_{t_{k}}}{dx}}\right].
\end{align*}
The first equality holds because of importance sampling and the second inequality holds by the linearity of expectation. Plugging in these identities, we get

\begin{align*}
  &  \EE\left[ \widetilde{\frac{dF^{RT}_{t_K}}{dx}}\right]\\
=&\EE\left[\widehat{ \frac{dF_{t_1}}{dx}}\right]+\sum_{k=1}^{K} p_k \frac{1}{p_k}\EE\left[\widetilde{\frac{dF_{t_{k+1}}}{dx}}-\widetilde{\frac{dF^{t_{k+1}}_{t_{k}}}{dx}}\right]\\
=&\EE\left[ \widehat{\frac{dF_{t_1}}{dx}}\right]+\sum_{k=1}^{K} p_k \frac{1}{p_k}\left(\EE\left[\widehat{\frac{dF_{t_{k+1}}}{dx}}\right]-\EE\left[\widehat{\frac{dF_{t_{k}}}{dx}}\right]\right)\\
= &\EE\left[\widehat{\frac{dF_{t_{K}}}{dx}}\right].
\end{align*}
It follows directly that the hypergradient estimators obtained by RT-Q and vanilla soft Q-learning must have the same bias.

For a sampled $k\in \{1,\dots,K\}$, soft Q-learning has an iteration complexity $c_k$ of $\cO(k2^k )$ to build the following hypergradient estimator:
\begin{equation*}
   {\frac{dF^{RT}_{t_K}}{dx}}=\widetilde{\frac{dF_{t_1}}{dx}}+\frac{ \widetilde{\frac{dF_{t_{k+1}}}{dx}}-\widetilde{\frac{dF^{t_{k+1}}_{t_{k}}}{dx}}}{p_k}. 
\end{equation*}
As we sample any $k$ with probability $p_k=\frac{2^{-k}}{1-2^{-K}}$, the expected iteration complexity is then given by 

\begin{align*}
&\sum_{k=1}^K c_k p_k\\
    =&\sum_{k=1}^K c_k \frac{2^{-k}}{1-2^{-K}}\\
    =& \sum_{k=1}^K \cO\left(\frac{k}{2^{-k}}\right)\frac{2^{-k}}{1-2^{-K}}\\
    \leq& \cO\left(K\right) \sum_{k=1}^K \cO(\frac{1}{1-2^{-K}})\\
    =& \cO(K^2),
\end{align*}
which proves our claim. The attentive reader will note that RT-Q runs \texttt{GradientEstimator} $2^k$ times instead of once and thus samples $2^k$ trajectories to estimate the advantage derivative instead of one like vanilla soft Q-learning. Nonetheless, RT-Q has the better sample complexity as both methods need to sample $\cO(k 2^k)$ state action pairs for a given $k$ to run soft Q-learning and then estimate the advantage derivative. The same analysis as for the iteration complexity thus shows that the sample complexity of vanilla soft Q-learning is $ \cO(K2^K)$ and $\cO(K^2)$ for RT-Q.

It remains to show that the variance is of order $\cO(K)$. This is the most challenging part of the proof. As outlined previously, we will iteratively decompose the variance until we can bound all terms by $\cO(K)$.

For better readability we introduce the following notation for a given pair $s,a$:
\begin{align*}
    H_k(1)&=\frac{\partial_1 f(x, \pi^{t_k},\xi)}{\partial x}  \\
H_k(2) &=
\begin{cases}
\frac{1}{\lambda \nu(s)} \frac{\partial_2 f(x, \pi^{t_{k}}, \xi)}{\partial \pi(s, a)} \widehat{ \partial_x A^{\pi^{t_{k}}}}(s, a) & \text{if } k = 1 \\
\frac{1}{\lambda \nu(s)} \frac{\partial_2 f(x, \pi^{t_{k}}, \xi)}{\partial \pi(s, a)} \widetilde{ \partial_x A^{\pi^{t_{k}}}}(s, a) & \text{if } k > 1
\end{cases}\\
          H_k^{k+1}(2)&=\frac{1}{\lambda \nu(s)} \frac{\pi^{t_k}(a;s)}{\pi^{t_{k+1}}(a;s)}\frac{\partial_2 f(x, \pi^{t_{k}},\xi))}{\partial \pi(s,a)} \widetilde{ \partial_x A^{\pi^{t_{k}}}}(s,a)\\
        H^*(1)&=\frac{\partial_1 f(x, \pi^*_{x,\xi},\xi)}{\partial x}  \\  
    H^*(2)&=\frac{1}{\lambda \nu(s)} \frac{\partial_2 f(x, \pi^*_{x,\xi},\xi))}{\partial \pi(s,a)} {\partial_x A^{\pi^{*}_{x,\xi}}_{\lambda,x,\xi}}(s,a).
\end{align*}
Then for a given $\xi,s,a$ (which are sampled at the beginning of RT-Q) we have that 
\begin{equation*}
    {\frac{d}{dx} F^{RT}_{t_K}}=H_1(1)+H_1(2) + \frac{H_{k+1}(1)+H_{k+1}(2)-H_{k}(1)-H^{k+1}_{k}(2)}{p_{k}}.
\end{equation*}

Now let us decompose the variance of the RT-Q hypergradient estimator using the newly introduced notation:
\begin{align*}
   & \EE\left[\norm{\frac{d}{dx} F^{RT}_{t_K}-\EE\left[\frac{d}{dx} F^{RT}_{t_K}\right]}_\infty^2\right]\\
  \leq &\EE\left[\norm{\frac{d}{dx} F^{RT}_{t_K}-\frac{d}{dx} F(x)}_\infty^2\right]\\
   \leq& 2 \EE\left[\norm{\frac{d}{dx} F^{RT}_{t_K}-H_1(1)-H_1(2)}_\infty^2\right]+2\EE\left[\norm{H_1(1)+H_1(2)-\frac{d}{dx} F(x)}_\infty^2\right]\\
   \leq & 4 \underbrace{\EE\left[\norm{\frac{H_{k+1}(1)-H_{k}(1)}{p_{k}}}_\infty^2\right]}_{\textcolor{red}{\textbf{(1)}}}+4\underbrace{ \EE\left[\norm{\frac{H_{k+1}(2)-H^{k+1}_{k}(2)}{p_{k}}}_\infty^2\right]}_{\textcolor{green}{\textbf{(2)}}}+2\underbrace{\EE\left[\norm{H_1(1)+H_1(2)-\frac{d}{dx} F(x)}_\infty^2\right]}_{\textcolor{blue}{\textbf{(3)}}}.
\end{align*}
We proceed to bound the individual terms.

Note that \textcolor{blue}{\textbf{(3)}} is indepedent of $k$. Using \Cref{eq:boundonadvantageestimate} in the analysis of \Cref{thm:convergence}, we can bound it as
\begin{align*}
    \textcolor{blue}{\textbf{(3)}}&\leq 4\left(\left(\frac{K_2}{1-\gamma}+\frac{K_1(\overline{R}+\lambda \log |\mathcal{A}|)}{(1-\gamma)^2}\right)+\frac{1}{1-\gamma} K_2 +\frac{\gamma}{1-\gamma} K_1 \frac{\overline{R}+\lambda \log |\mathcal{A}|}{1-\gamma^{0.5}}\right)^2\\
    &=\cO(1).
\end{align*}

{\textcolor{red}{\textbf{(1)}}} is also relatively easy to bound, as shown below:
\begin{align*}
    {\textcolor{red}{\textbf{(1)}}}&= \sum_{k=1}^K \frac{1}{p_k}\EE\left[\norm{H_{k+1}(1)-H_{k}(1)}_\infty^2\right]\\
    &=\sum_{k=1}^K \frac{1}{p_k}\EE\left[\norm{\frac{\partial_1 f(x, \pi^{t_{k+1}},\xi)}{\partial x}-\frac{\partial_1 f(x, \pi^{t_{k}},\xi)}{\partial x}}_\infty^2\right] \\
   &\leq \sum_{k=1}^K \frac{1}{p_k} S_f^2 \EE\left[ \norm{\pi^{t_{k+1}}-\pi^{t_{k}}}_\infty^2\right] \\
   &\leq \sum_{k=1}^K \frac{1}{p_k} S_f^2 2\left(\EE\left[ \norm{\pi^{t_{k+1}}-\pi^{*}_{x,\xi}}_\infty^2\right]+\EE\left[ \norm{\pi^{t_{k}}-\pi^{*}_{x,\xi}}_\infty^2\right] \right)\\
   &\leq \sum_{k=1}^K \frac{1}{p_k} S_f^2 2\left(\frac{1}{2^{k+1}}+\frac{1}{2^{k}} \right)\\
      &= \sum_{k=1}^K 2^k S_f^2 \frac{3}{2^{k}} \\
      &= 3 S_f^2 K\\
      &=\cO(K).
\end{align*}
In the first equality we simply use that $k$ is sampled with probability $p_k=\frac{2^{-k}}{1-2^{-K}}$. In the second equality we plug in the definitions of $H_{k+1}(1)$ and $H_{k}(1)$. In the first inequality we use the smoothness of $f$. The second inequality uses \Cref{eq:triangleinequalitysquared}. The third inequality follows from the fact that $t_{k+1}$ is chosen to guarantee an expected distance of at most $\frac{1}{2^{k+1}}$ to $\pi^*_{x,\xi}$. The remaining equalities follow from plugging in $p_k$ and rearranging terms.

Now we want to repeat the same analysis again for {\textcolor{green}{\textbf{(2)}}}. As for {\textcolor{red}{\textbf{(1)}}} we get a sum over $k$ with the factor $\frac{1}{p_k}$, which we need to compensate by bounding $\EE\left[\norm{{H_{k+1}(2)-H^{k+1}_{k}(2)}}_\infty^2\right]$ by $\cO(2^{-k})$. However, bounding the latter is more invovled than our analysis for {\textcolor{red}{\textbf{(1)}}}. In the following pages, we will iteratively apply \Cref{eq:triangleinequalitysquared} until the terms get easy enough, such that we can use one of the following two facts about RT-Q:
\begin{enumerate}
    \item  $t_k = \cO(k2^{k})$ is chosen sucht that $\lambda \EE{\norm{\pi^{t_k}-\pi^*}^2_\infty}\leq \EE{\norm{Q^{t_k}-Q^*_{\lambda}}^2_\infty}\leq 2^{-k}$
    \item $\widetilde{ \partial_x A^{\pi^{t_{k+1}}}}(s,a)$ and related terms are an average estimate over $2^k$ trajectory samples, such that their variance (with respect to these random rollouts) is $\cO(\frac{1}{2^k})$.
\end{enumerate}

We briefly note that in the analysis below we will sometimes write $Q(s,a)$ or $A(s,a)$ inside the infinity norm for ease of exposition. When we do so, the infinity norm is still interpreted as the maximum over all possible $s,a$ and not as the absolute value of the Q-function or advantage for a specific $s,a$. 

Let us start decomposing the numerator of {\textcolor{green}{\textbf{(2)}}}:

\begin{align*}
&\EE\left[\norm{{H_{k+1}(2)-H^{k+1}_{k}(2)}}_\infty^2\right]\\
    =&\EE\Biggl[\norm{\frac{1}{\lambda \nu(s)} \frac{\partial_2 f(x, \pi^{t_k},\xi))}{\partial \pi(s,a)} \widetilde{\partial_x A^{\pi^{t_k}}}(s,a)-\frac{1}{\lambda \nu(s)} \frac{\pi^{t_k}(a;s)}{\pi^{t_{k+1}}(a;s)}\frac{\partial_2 f(x, \pi^{t_{k}},\xi))}{\partial \pi(s,a)} \widetilde{ \partial_x A^{\pi^{t_{k}}}}(s,a)}_{\infty}^2\Biggr]\\
    \leq & 2\EE\Biggl[\norm{\frac{1}{\lambda \nu(s)} \frac{\partial_2 f(x, \pi^{t_k},\xi))}{\partial \pi(s,a)} \widetilde{\partial_x A^{\pi^{t_k}}}(s,a)}_{\infty}^2\norm{\frac{\pi^{t_{k+1}}(a;s)}{\pi^{t_{k+1}}(a;s)}-\frac{\pi^{t_k}(a;s)}{\pi^{t_{k+1}}(a;s)}}_\infty^2\Biggr]\\
    &+2 \EE\Biggl[\norm{\frac{\pi^{t_k}(a;s)}{\pi^{t_{k+1}}(a;s)}}_\infty^2\norm{\frac{1}{\lambda \nu(s)} \frac{\partial_2 f(x, \pi^{t_{k+1}},\xi))}{\partial \pi(s,a)} \widetilde{\partial_x A^{\pi^{t_{k+1}}}}(s,a)-\frac{1}{\lambda \nu(s)} \frac{\partial_2 f(x, \pi^{t_{k}},\xi))}{\partial \pi(s,a)} \widetilde{ \partial_x A^{\pi^{t_{k}}}}(s,a)}_{\infty}^2\Biggr]\\
 \leq   & 2\underbrace{\EE\Biggl[\norm{\frac{1}{\lambda \nu(s)} \frac{\partial_2 f(x, \pi^{t_k},\xi))}{\partial \pi(s,a)} \widetilde{\partial_x A^{\pi^{t_k}}}(s,a)}_{\infty}^2\norm{{\pi^{t_{k+1}}(a;s)}-\pi^{t_k}(a;s)}_\infty^2\norm{\frac{1}{\pi^{t_{k+1}}(a;s)}}_\infty^2\Biggr]}_{\textcolor{yellow}{\textbf{(i)}}}\\
    &+2 \underbrace{\EE\Biggl[\norm{\frac{\pi^{t_k}(a;s)}{\pi^{t_{k+1}}(a;s)}}_\infty^2\norm{\frac{1}{\lambda \nu(s)} \frac{\partial_2 f(x, \pi^{t_{k+1}},\xi))}{\partial \pi(s,a)} \widetilde{\partial_x A^{\pi^{t_{k+1}}}}(s,a)-\frac{1}{\lambda \nu(s)} \frac{\partial_2 f(x, \pi^{t_{k}},\xi))}{\partial \pi(s,a)} \widetilde{ \partial_x A^{\pi^{t_{k}}}}(s,a)}_{\infty}^2\Biggr]}_{\textcolor{violet}{\textbf{(ii)}}}
\end{align*}
The first equality is just plugging in definitions. The first inequality follows from \Cref{eq:triangleinequalitysquared} and the second inequality comes from the Cauchy-Schwarz inequality.

Let us begin with bounding {\textcolor{yellow}{\textbf{(i)}}}. First we want to tackle the fractions of the policies. For this, recall that 
$\forall x,\xi,s,a : |r_{x,\xi}(s,a)|<\overline{R}$ and that $0\leq H(\pi;s)\leq \log|\cA|$ (by Jensen's inequality).
Thus we have that:
\[
\frac{-\overline{R}}{1-\gamma}\leq Q_{\lambda}(s,a)\leq \frac{\overline{R}+\lambda \log |\cA|}{1-\gamma}.
\]
The above bounds extend to any soft Q-learning estimate, which can be obtained by \Cref{alg:qlearning}, since by \Cref{assum:theory} the algorithm cannot observe rewards with greater magnitude than $\overline{R}$. As \Cref{alg:qlearning} returns a softmax policy, i.e. \[
\pi^{t_k}(a;s) = \frac{\exp(Q^{t_k}(a;s)/\lambda)}{\sum_{a'} \exp(Q^{t_k}(a';s)/\lambda)},
\]it holds that:
\begin{equation}
\label{eq:minmax_softmaxpolicy}
\forall t_k, \forall s, \forall a: \frac{\exp(\frac{\overline{R}+\lambda \log |\cA|}{\lambda(1-\gamma)})}{|\mathcal{A}|\exp(\frac{-\overline{R}}{\lambda(1-\gamma)})} \geq \pi^{t_k}(a;s)\geq \frac{\exp(\frac{-\overline{R}}{\lambda(1-\gamma)})}{|\mathcal{A}|\exp(\frac{\overline{R}+\lambda \log |\cA|}{\lambda(1-\gamma)})}.
\end{equation}
Therefore we have:
\begin{align*}
\exists M_1<\infty,\forall t_k:& \norm{\frac{1}{\pi^{t_{k}}(a;s)}}_\infty^2 \leq M_1 -
\\
\exists M_2<\infty,\forall t_k:& \norm{\frac{\pi^{t_k}(a;s)}{\pi^{t_{k+1}}(a;s)}}_\infty^2 \leq M_2.
\end{align*}
Using this we can bound {\textcolor{yellow}{\textbf{(i)}}}. Let $m=\min_s \nu(s)$, then: 
\begin{align*}
{\textcolor{yellow}{\textbf{(i)}}}&\leq \frac{M_1}{\lambda m} 2\left(\frac{ K_2}{1-\gamma} +\frac{\gamma}{1-\gamma} K_1 \frac{\overline{R}+\lambda \log |\mathcal{A}|}{1-\gamma^{0.5}}\right) \EE\left[\norm{{\pi^{t_{k+1}}(a;s)}-\pi^{t_k}(a;s)}_\infty^2 \right]\\
&\leq \cO\left(2\EE\left[\norm{{\pi^{t_{k+1}}(a;s)}-\pi^*_{x,\xi}(a;s)}_\infty^2 \right]+2\EE\left[\norm{{\pi^{t_{k}}(a;s)}-\pi^*_{x,\xi}(a;s)}_\infty^2 \right]\right)\\
&\leq \cO\left(\frac{1}{2^k}\right).
\end{align*}
The first inequality uses \Cref{eq:boundonadvantageestimate}. The second inequality uses \Cref{eq:triangleinequalitysquared} and the last equality uses the convergence of soft Q-learning. 

Now we turn to bound {\textcolor{violet}{\textbf{(ii)}}}:
\begin{align*}
   {\textcolor{violet}{\textbf{(ii)}}} \leq& 2\frac{M_2}{\lambda m} \EE\left[\norm{ \frac{\partial_2 f(x, \pi^{t_{k+1}},\xi))}{\partial \pi(s,a)} \widetilde{\partial_x A^{\pi^{t_{k+1}}}}(s,a)- \frac{\partial_2 f(x, \pi^{t_{k}},\xi))}{\partial \pi(s,a)} \widetilde{ \partial_x A^{\pi^{t_{k}}}}(s,a)}_{\infty}^2\right]\\
   \leq &2\frac{M_2}{\lambda m} \EE\Bigg[2\norm{ \frac{\partial_2 f(x, \pi^{t_{k+1}},\xi))}{\partial \pi(s,a)} \widetilde{\partial_x A^{\pi^{t_{k+1}}}}(s,a)- \frac{\partial_2 f(x, \pi^{t_{k+1}},\xi))}{\partial \pi(s,a)} \widetilde{ \partial_x A^{\pi^{t_{k}}}}(s,a)}_{\infty}^2\\
   &+ 2\norm{ \frac{\partial_2 f(x, \pi^{t_{k+1}},\xi))}{\partial \pi(s,a)} \widetilde{\partial_x A^{\pi^t_{k}}}(s,a)- \frac{\partial_2 f(x, \pi^{t_{k}},\xi))}{\partial \pi(s,a)} \widetilde{ \partial_x A^{\pi^{t_{k}}}}(s,a)}_{\infty}^2\Bigg]\\
   \leq &4\frac{M_2}{\lambda m} \Bigg( \EE\Bigg[\norm{ \frac{\partial_2 f(x, \pi^{t_{k+1}},\xi))}{\partial \pi(s,a)}}_\infty^2 \norm{\widetilde{\partial_x A^{\pi^{t_{k+1}}}}(s,a)-  \widetilde{ \partial_x A^{\pi^{t_{k}}}}(s,a)}_{\infty}^2\\
   &+ \norm{ \widetilde{ \partial_x A^{\pi^{t_{k}}}}(s,a)}_\infty^2 \norm{\frac{\partial_2 f(x, \pi^{t_{k+1}},\xi))}{\partial \pi(s,a)} - \frac{\partial_2 f(x, \pi^{t_{k}},\xi))}{\partial \pi(s,a)} }_{\infty}^2\Bigg]\Bigg)\\
   \leq &4\frac{M_2}{\lambda m}\Bigg( \underbrace{\EE\Bigg[ \norm{ \widetilde{ \partial_x A^{\pi^{t_{k}}}}(s,a)}_\infty^2 \norm{\frac{\partial_2 f(x, \pi^{t_{k+1}},\xi))}{\partial \pi(s,a)} - \frac{\partial_2 f(x, \pi^{t_{k}},\xi))}{\partial \pi(s,a)} }_{\infty}^2\Bigg]}_{\textcolor{cyan}{\textbf{(A)}}}\\
   &+L_f^2 \underbrace{\EE\Bigg[\norm{\widetilde{\partial_x A^{\pi^{t_{k+1}}}}(s,a)-  \widetilde{ \partial_x A^{\pi^{t_{k}}}}(s,a)}_{\infty}^2\Bigg] }_{\textcolor{magenta}{\textbf{(B)}}}\Bigg). \\
   \end{align*}
The first inequality uses $M_2$ to bound the policy fraction the definition of $m=\min_s \nu(s)$ and Cauchy-Schwarz. The second inequality uses \Cref{eq:triangleinequalitysquared} and Cauchy-Schwarz. For the final inequality, we rearrange and use the Lipschitz continuity of $f$ (cf. \Cref{assum:theory}).

{\textcolor{cyan}{\textbf{(A)}}} is relatively easy to bound as follows:
\begin{align*}
    {\textcolor{cyan}{\textbf{(A)}}}&\leq \left(2\left(\frac{K_2}{1-\gamma} +\frac{\gamma}{1-\gamma} K_1 \frac{\overline{R}+\lambda \log |\mathcal{A}|}{1-\gamma^{0.5}}\right) \right)^2 S_f  \EE\left[\norm{{\pi^{t_{k+1}}(a;s)}-\pi^{t_k}(a;s)}_\infty^2 \right]\\
    &\leq \cO\left(2\EE\left[\norm{{\pi^{t_{k+1}}(a;s)}-\pi^*_{x,\xi}(a;s)}_\infty^2 \right]+2\EE\left[\norm{{\pi^{t_{k}}(a;s)}-\pi^*_{x,\xi}(a;s)}_\infty^2 \right]\right)\\
&\leq \cO\left(\frac{1}{2^k}\right).
\end{align*}
In the first inequality we use the smoothness of $f$ and \Cref{eq:boundonadvantageestimate}. In the second inequality we use \Cref{eq:triangleinequalitysquared} and in the final inequality the convergence of soft Q-learning.

Now, we turn our attention towards bounding {\textcolor{magenta}{\textbf{(B)}}}.
First, notice that both advantage derivatives are evaluated for the same state-action pair. This is because in \Cref{alg:hpgdrtq} we sample $a$ once according to $\pi^{t_{k+1}}$ and reuse the same $a$ for $A^{\pi^{t_{k}}}(s,a)$ by doing importance sampling. Without this trick, it would be hopeless to bound {\textcolor{magenta}{\textbf{(B)}}}.

When bounding {\textcolor{magenta}{\textbf{(B)}}}, we need to consider two sources of randomness. First, there is the randomness in the soft Q-learning iterations. Second, we have to account for the randomness over the trajectory rollouts of \texttt{GradientEstimator} to estimate $\widetilde{\partial_x A^{\pi}}$. 
\texttt{GradienEstimator} first separately estimates $\widehat{\partial_x V^{\pi^{t_k}}}$ and $\widehat{\partial_x V^{\pi^{t_k}}}$ from a trajectory and then returns $\widehat{\partial_x A^{\pi^{t_k}}}=\widehat{\partial_x Q^{\pi^{t_k}}}-\widehat{\partial_x V^{\pi^{t_k}}}$. We therefore denote by $${\widetilde{\partial_x V^{\pi^{t_k}}}}:=\frac{1}{2^{k}}\sum_{l=1}^{2^{k}}  \widehat{ \partial_x V^{\pi^{t_k}}}(\tau_l),$$ the average over the $2^k$ value function derivatives estimated as part of the $2^k$ \texttt{GradientEstimator} procedures performed in RT-Q. Note because of the unbiasedness of \texttt{GradienEstimator} (cf. \Cref{prop:unbiasedgradient}), it holds that ${\partial_x V^{\pi^{t_k}}}=\EE_\tau\left[{\widetilde{\partial_x V^{\pi^{t_k}}}}\right]$

Using the above notation, we get the following bound by repeatedly applying \Cref{eq:triangleinequalitysquared}:
    \begin{align*}
  {\textcolor{magenta}{\textbf{(B)}}} \leq& 2 \EE\norm{\widetilde{\partial_x Q^{\pi^{t_k}}}(s,a)-\widetilde{\partial_x Q^{\pi^{t_{k+1}}}}(s,a)}_\infty^2 +2\EE \norm{\widetilde{\partial_x V^{\pi^{t_k}}}(s)-\widetilde{\partial_x V^{\pi^{t_{k+1}}}}(s)}_\infty^2 \\
  \leq &4 \EE\norm{\widetilde{\partial_x Q^{\pi^{t_k}}}(s,a)-\partial_x Q^{\pi^*}(s,a)}_\infty^2 +4 \EE\norm{\widetilde{\partial_x Q^{\pi^{t_{k+1}}}}(s,a)-\partial_x Q^{\pi^{*}}(s,a)}_\infty^2\\
  &+4\EE \norm{\widetilde{\partial_x V^{\pi^{t_k}}}(s)-\partial_x V^{\pi^{*}}(s)}_\infty^2+4\EE \norm{\widetilde{\partial_x V^{\pi^{t_{k+1}}}}(s)-\partial_x V^{\pi^{*}}(s)}_\infty^2\\
    \leq& 8 \EE\norm{\widetilde{\partial_x Q^{\pi^{t_k}}}(s,a)-\partial_x Q^{\pi^{t_{k}}}(s,a)}_\infty^2 +8 \EE\norm{{\partial_x Q^{\pi^{t_{k}}}}(s,a)-\partial_x Q^{\pi^*}(s,a)}_\infty^2\\
    &+8 \EE\norm{\widetilde{\partial_x Q^{\pi^{t_{k+1}}}}(s,a)-\partial_x Q^{\pi^{t_{k+1}}}(s,a)}_\infty^2 +8 \EE\norm{{\partial_x Q^{\pi^{t_{k+1}}}}(s,a)-\partial_x Q^{\pi^{*}}(s,a)}_\infty^2\\
  &+8\EE \norm{\widetilde{\partial_x V^{\pi^{t_k}}}(s)-{\partial_x V^{\pi^{t_{k}}}}(s)}_\infty^2+8\EE \norm{{\partial_x V^{\pi^{t_{k}}}}(s)-\partial_x V^{\pi^{*}}(s)}_\infty^2\\
 &+8\EE \norm{\widetilde{\partial_x V^{\pi^{t_{k+1}}}}(s)-{\partial_x V^{\pi^{t_{k+1}}}}(s)}_\infty^2+8\EE \norm{{\partial_x V^{\pi^{t_{k+1}}}}(s)-\partial_x V^{\pi^{*}}(s)}_\infty^2.\\
    \end{align*}
Here $Q^*_{\lambda}$ denotes the optimal Q-function, i.e. for the policy $\pi^*_{x,\xi}$. In the equations above we have two flavours of terms. The first are the differences between the derivative estimator and its expectation for a given policy and the second are the differences between the expected derivative under the learned and optimal policy. We start by bounding the first kind of terms.
Recall that \begin{equation*}
    \widetilde{ \partial_x Q^{\pi^{t_k}}}=\frac{1}{2^{k}}\sum_{l=1}^{2^{k}}  \widehat{ \partial_x Q^{\pi^{t_k}}}(\tau_l), \quad \widetilde{ \partial_x V^{\pi^{t_k}}}=\frac{1}{2^{k}}\sum_{l=1}^{2^{k}}  \widehat{ \partial_x V^{\pi^{t_k}}}(\tau_l)
\end{equation*}
As the second moment of $ \widehat{ \partial_x Q^{\pi^{t_k}}}(\tau_l)$ and $\widehat{ \partial_x V^{\pi^{t_k}}}(\tau_l)$  is bounded, we have that:
\begin{equation}
\label{eq:boundedvarianceestimator}
    \EE_\tau\left[\norm{ \widetilde{ \partial_x Q^{\pi^{t_k}}} -{\partial_x Q^{\pi^{t_k}}} }_\infty^2\right]=\cO(2^{-k}) ,\quad \EE_\tau\left[\norm{ \widetilde{ \partial_x V^{\pi^{t_k}}} -{\partial_x V^{\pi^{t_k}}} }_\infty^2\right]=\cO(2^{-k})
\end{equation}
since we use $2^k$ sampled trajectories. The same analysis of course also holds for $t_{k+1}$ and thus plugging this in, we get
     \begin{align*}
  {\textcolor{magenta}{\textbf{(B)}}} \leq &\cO(\frac{1}{2^k})+
    8 \EE\norm{{\partial_x Q^{\pi^{t_{k}}}}(s,a)-\partial_x Q^{\pi^*}(s,a)}_\infty^2+8 \EE\norm{{\partial_x Q^{\pi^{t_{k+1}}}}(s,a)-\partial_x Q^{\pi^{*}}(s,a)}_\infty^2\\
  &+8\EE \norm{{\partial_x V^{\pi^{t_{k}}}}(s)-\partial_x V^{\pi^{*}}(s)}_\infty^2+8\EE \norm{{\partial_x V^{\pi^{t_{k+1}}}}(s)-\partial_x V^{\pi^{*}}(s)}_\infty^2\\
  \leq &\cO(\frac{1}{2^k})+
    24 \EE\norm{{\partial_x Q^{\pi^{t_{k}}}}(s,a)-\partial_x Q^{\pi^*}(s,a)}_\infty^2+24 \EE\norm{{\partial_x Q^{\pi^{t_{k+1}}}}(s,a)-\partial_x Q^{\pi^{*}}(s,a)}_\infty^2\\
  &+16|\cA|^2 \left(\frac{K_1(\overline{R}+\lambda \log |\mathcal{A}|)}{(1-\gamma)^2}\right)^2 \left(\EE \norm{\pi^{t_{k}}-\pi^{*}}_\infty^2+\EE \norm{\pi^{t_{k+1}}-\pi^{*}}_\infty^2\right)\\
  \leq &\cO(\frac{1}{2^k})+ 24 \EE\norm{{\partial_x Q^{\pi^{t_{k}}}}(s,a)-\partial_x Q^{\pi^*}(s,a)}_\infty^2+24 \EE\norm{{\partial_x Q^{\pi^{t_{k+1}}}}(s,a)-\partial_x Q^{\pi^{*}}(s,a)}_\infty^2
  .\\
    \end{align*}
In the first inequality, we simply plug in \Cref{eq:boundedvarianceestimator}. In the third inequality we use the convergence of soft Q-learning and in the second inequality, we use the following identity (cf. \Cref{eq:diffvsmallerdiffq}):
\begin{equation*}
    \begin{aligned}
        &\norm{\partial_x V^{\pi^o_{x_t,\xi}}_{\lambda,x,\xi}(s)-\partial_x{V^{\pi^*_{x_t,\xi}}_{\lambda,x,\xi}(s)}}_\infty^2\\
        =&\norm{\sum_a \pi^o_{x_t,\xi}(a;s) \partial_x  Q^{\pi^o_{x_t,\xi}}_{\lambda,x,\xi}(s,a)-\sum_a \pi^*_{x_t,\xi}(a;s) \partial_x{Q^{\pi^*_{x_t,\xi}}_{\lambda,x,\xi}(s,a)}}_\infty^2\\
        =& \norm{\sum_a \left(\pi^o_{x_t,\xi}(a;s)-\pi^*_{x_t,\xi}(a;s)\right) \partial_x  Q^{\pi^o_{x_t,\xi}}_{\lambda,x,\xi}(s,a)-\sum_a \pi^*_{x_t,\xi}(a;s) \left(\partial_x{Q^{\pi^*_{x_t,\xi}}_{\lambda,x,\xi}(s,a)}-\partial_x{Q^{\pi^o_{x_t,\xi}}_{\lambda,x,\xi}(s,a)}\right)}_\infty^2\\
        \leq& 2|\mathcal{A}|^2\left(\frac{K_1(\overline{R}+\lambda \log |\mathcal{A}|)}{(1-\gamma)^2} \right)^2 \norm{\pi^o_{x_t,\xi}-\pi^*_{x_t,\xi}}_\infty+2\norm{\partial_x{Q^{\pi^*_{x_t,\xi}}_{\lambda,x,\xi}(s,a)}-\partial_x{Q^{\pi^o_{x_t,\xi}}_{\lambda,x,\xi}(s,a)}}_\infty^2
        ,
    \end{aligned}
\end{equation*}
where we use \Cref{eq:triangleinequalitysquared} for the last inequality.

To bound {\textcolor{magenta}{\textbf{(B)}}}, it only remains to upper bound $\EE\norm{{\partial_x Q^{\pi^{t_{k}}}}(s,a)-\partial_x Q^{\pi^*}(s,a)}_\infty^2$ and $\EE\norm{{\partial_x Q^{\pi^{t_{k+1}}}}(s,a)-\partial_x Q^{\pi^*}(s,a)}_\infty^2$. Below we will derive an upper bound for the former term. The same analysis yields an equivalent bound for the latter term. We denote by $V^*_{\lambda}$ the optimal regularized value function. Note, the analysis is similar to the one performed in the proof of \Cref{thm:convergence},
    \begin{align*}
   &\norm{\partial_x Q^{\pi^{t_k}}(s,a)-\partial_x Q^*_{\lambda}(s,a)}_\infty \\
   = &\Bigg|\Bigg|\sum_{t=0}^\infty \sum_{s',a'} \gamma^t p_{x,\xi}(s,a \rightarrow s',a';t,\pi^{t_k}) \left(\frac{d r_{x,\xi}(s',a') }{dx}+\gamma \sum_{s''} \frac{d P_{x,\xi}(s'';s',a') }{dx}V^{\pi^{t_k}}(s'') \right) \\
    &- \sum_{t=0}^\infty \sum_{s',a'} \gamma^t p_{x,\xi}(s,a \rightarrow s',a';t,\pi^*_{x_t,\xi}) \left(\frac{d r_{x,\xi}(s',a') }{dx}+\gamma \sum_{s''} \frac{d P_{x,\xi}(s'';s',a') }{dx}V^*_{\lambda}(s'') \right) \Bigg|\Bigg|_{\infty}\\
   =&\Bigg|\Bigg|\frac{d r_{x,\xi}(s,a) }{dx} + \gamma \sum_{s'} \frac{d P_{x,\xi}(s';s,a) }{dx}V^{\pi^{t_k}}(s')\\
        &+ \sum_{t=1}^\infty \sum_{s',a'} \gamma^t p_{x,\xi}(s,a \rightarrow s',a';t,\pi^{t_k}) \left(\frac{d r_{x,\xi}(s',a') }{dx}+\gamma \sum_{s''} \frac{d P_{x,\xi}(s'';s',a') }{dx}V^{\pi^{t_k}}(s'') \right) \\
        &- \frac{d r_{x,\xi}(s,a) }{dx} - \gamma \sum_{s'} \frac{d P_{x,\xi}(s';s,a) }{dx}V^*_{\lambda}(s')\\
    &- \sum_{t=1}^\infty \sum_{s',a'} \gamma^t p_{x,\xi}(s,a \rightarrow s',a';t,\pi^*_{x_t,\xi}) \left(\frac{d r_{x,\xi}(s',a') }{dx}+\gamma \sum_{s''} \frac{d P_{x,\xi}(s'';s',a') }{dx}V^*_{\lambda}(s'') \right) \Bigg|\Bigg|_{\infty}  \\
\leq& \gamma |\cS| \norm{\frac{d P_{x,\xi}(s';s,a) }{dx}}_\infty \norm{V^{\pi^{t_k}}(s')-V^*_{\lambda}(s') }_{\infty}\\
    &+ \gamma \Bigg|\Bigg| \sum_{s',a'}  P(s';s,a)  \pi^*_{x_t,\xi}(a',s') \sum_{t=0}^\infty \sum_{s'',a''} \gamma^t p_{x,\xi}(s',a' \rightarrow s'',a'';t,\pi^*_{x_t,\xi}) \dots \\
    &\quad \left(\frac{d r_{x,\xi}(s'',a'') }{dx}+\gamma \sum_{s'''} \frac{d P_{x,\xi}(s''';s'',a'') }{dx}V^*_{\lambda}(s'') \right)\\
    &-   \sum_{s',a'}  P(s';s,a)  \pi^{t_k}(a',s') \sum_{t=0}^\infty \sum_{s'',a''} \gamma^t p_{x,\xi}(s',a' \rightarrow s'',a'';t,\pi^{t_k}) \dots \\
    &\quad \left(\frac{d r_{x,\xi}(s'',a'') }{dx}+\gamma \sum_{s'''} \frac{d P_{x,\xi}(s''';s'',a'') }{dx}V^{\pi^{t_k}}(s'') \right) \Bigg|\Bigg|_{\infty}\\
    \leq& \gamma |\cS| \norm{\frac{d P_{x,\xi}(s';s,a) }{dx}}_\infty \norm{V^{\pi^{t_k}}(s')-V^*_{\lambda}(s') }_{\infty}\\
    &+\gamma \Bigg|\Bigg| \sum_{s',a'}  P(s';s,a)  \pi^*_{x_t,\xi}(a',s')  \partial_x{Q^*_{\lambda}(s',a')}\\
    &-   \sum_{s',a'}  P(s';s,a)  \pi^{t_k}(a',s')  \partial_x{Q^{\pi^{t_k}}(s',a')} \Bigg|\Bigg|_{\infty}\\
    \leq& \gamma |\cS| \norm{\frac{d P_{x,\xi}(s';s,a) }{dx}}_\infty\norm{V^{\pi^{t_k}}(s')-V^*_{\lambda}(s') }_{\infty}\\
        &+\gamma  \norm{\partial_xQ^*_{\lambda}(s',a')}_{\infty}\norm{\sum_{s'}  P(s';s,a) }_\infty \norm{ \pi^*_{x_t,\xi}-  \pi^{t_k}}_\infty\\
    &+ \gamma \sum_{s',a'}  P(s';s,a)  \pi^{t_k}(a',s') \norm{\partial_x{Q^*_{\lambda}(s',a')}-\partial_x{Q^{\pi^{t_k}}(s',a')}  }_\infty\\
        \leq& \gamma |\cS| \norm{\frac{d P_{x,\xi}(s';s,a) }{dx}}_\infty \norm{V^{\pi^{t_k}}(s')-V^*_{\lambda}(s') }_{\infty}\\
        &+\gamma \norm{\partial_xQ^*_{\lambda}(s',a')}_{\infty} \norm{ \pi^*_{x_t,\xi}-  \pi^{t_k}}_\infty\\
    &+ \gamma\norm{\partial_x{Q^*_{\lambda}(s',a')}-\partial_x{Q^{\pi^{t_k}}(s',a')}  }_\infty\\
    \leq&  \gamma |\cS| K_1 \norm{V^{\pi^{t_k}}(s')-V^*_{\lambda}(s') }_{\infty}\\
        &+\gamma \left(\frac{K_2}{1-\gamma}+\frac{K_1(\overline{R}+\lambda \log \cA)}{(1-\gamma)^2}\right) \norm{ \pi^*_{x_t,\xi}-  \pi^{t_k}}_\infty\\
    &+ \gamma \norm{\partial_x{Q^*_{\lambda}(s',a')}-\partial_x{Q^{\pi^{t_k}}(s',a')}  }_\infty\\
        \leq&  \frac{\gamma}{1-\gamma} |\cS| K_1 \norm{V^{\pi^{t_k}}(s')-V^*_{\lambda}(s') }_{\infty}\\
        &+\frac{\gamma}{1-\gamma} \left(\frac{K_2}{1-\gamma}+\frac{K_1(\overline{R}+\lambda \log \cA)}{(1-\gamma)^2}\right) \norm{ \pi^*_{x_t,\xi}-  \pi^{t_k}}_\infty\\
\end{align*}
The first equality follows from plugging in the result from \Cref{prop:policygradient}. The second equality follows by taking out all terms with $t=0$. The first inequality used the triangle inequality. The second inequality plugs back in the definition from \Cref{prop:policygradient}. The third inequality uses Cauchy-Schwarz. The fourth inequality follows from simplifying. The fifth inequality uses \Cref{eq:boundqderivative}. The sixth inequality uses the geometric sum.

To simplify the bound above we want to expres $\norm{V^{\pi^{t_k}}(s')-V^*_{\lambda}(s') }_{\infty}$ using $\norm{Q^{\pi^{t_k}}-Q^*_{\lambda} }_\infty$ and $\norm{{\pi^{t_k}}-{\pi^{*}}}_{\infty}$ (as we know the latter two converge for soft Q-learning). We have previously seen in \Cref{eq:bound_v_difference} that
\begin{equation*}
    \norm{V^{\pi^{t_k}}(s')-V^*_{\lambda}(s') }_{\infty}\leq \norm{Q^{\pi^{t_k}}-Q^*_{\lambda} }_{\infty}+\norm{{\pi^{t_k}}-\pi^* }_{\infty}\left(\frac{\overline{R}}{1-\gamma}+\lambda |\mathcal{A}||\log l_2|  + \frac{2}{l_2}\right).
\end{equation*}
Here we use $l_2$ to denote the minimum possible value that any policy output by soft Q-learning can achieve. Note $l_2>0$, which follows from \Cref{eq:minmax_softmaxpolicy}.

Plugging this result back in, we get
\begin{align*}
   &\norm{\partial_x Q^{\pi^{t_k}}(s,a)-\partial_x Q^*_{\lambda}(s,a)}_\infty \\
\leq &\frac{\gamma}{1-\gamma} |\cS| K_1 \left(\norm{Q^{\pi^{t_k}}-Q^*_{\lambda} }_{\infty}+\norm{{\pi^{t_k}}-\pi^* }_{\infty}\left(\frac{\overline{R}}{1-\gamma}+\lambda |\mathcal{A}||\log l_2|  + \frac{2}{l_2}\right)\right)\\
        &+\frac{\gamma}{1-\gamma} \left(\frac{K_2}{1-\gamma}+\frac{K_1(\overline{R}+\lambda \log \cA)}{(1-\gamma)^2}\right) \norm{ \pi^*_{x_t,\xi}-  \pi^{t_k}}_\infty\\
\end{align*}
From \cite{Mei2020Global}[Lemma 24] we know
    \[
    \norm{\pi^{t_k}-\pi^*}_\infty \leq  \frac{1}{\lambda} \norm{Q^{t_k} - Q^*_{\lambda}}_\infty.
    \]
With that result we can simplify to 
\begin{align*}
   &\norm{\partial_x Q^{\pi^{t_k}}(s,a)-\partial_x Q^*_{\lambda}(s,a)}_\infty \\
\leq& \frac{\gamma}{1-\gamma} |\cS| K_1 \left(\norm{Q^{\pi^{t_k}}-Q^*_{\lambda} }_{\infty}+ \frac{1}{\lambda} \norm{Q^{t_k} - Q^*_{\lambda}}_\infty\left(\frac{\overline{R}}{1-\gamma}+\lambda |\mathcal{A}||\log l_2|  + \frac{2}{l_2}\right)\right)\\
        &+\frac{\gamma}{1-\gamma} \left(\frac{K_2}{1-\gamma}+\frac{K_1(\overline{R}+\lambda \log \cA)}{(1-\gamma)^2}\right) \frac{1}{\lambda} \norm{Q^{t_k} - Q^*_{\lambda}}_\infty\\
        \leq& \norm{Q^{t_k} - Q^*_{\lambda}}_\infty \left(\frac{\gamma}{1-\gamma}|\cS| K_1\left(1+\frac{1}{\lambda} \frac{\overline{R}}{1-\gamma}+ |\mathcal{A}||\log l_2|  + \frac{2}{\lambda l_2}\right)+ \frac{\gamma}{1-\gamma} \left(\frac{K_2}{1-\gamma}+\frac{K_1(\overline{R}+\lambda \log \cA)}{(1-\gamma)^2}\right) \frac{1}{\lambda}\right).
\end{align*}
Therefore it follows that
\begin{align*}
   &\EE\left[\norm{\partial_x Q^{\pi^{t_k}}(s,a)-\partial_x Q^*_{\lambda}(s,a)}_\infty^2 \right]=\cO\left(\frac{1}{2^k}\right),
\end{align*}
where we use the convergence of soft Q-learning.

Using the above analysis, we can now bound:
     \begin{align*}
  {\textcolor{magenta}{\textbf{(B)}}}&\leq \cO(\frac{1}{2^k})+ 24 \EE\norm{{\partial_x Q^{\pi^{t_{k}}}}(s,a)-\partial_x Q^{\pi^*}(s,a)}_\infty^2+24 \EE\norm{{\partial_x Q^{\pi^{t_{k+1}}}}(s,a)-\partial_x Q^{\pi^{*}}(s,a)}_\infty^2\\
  &=\cO\left(\frac{1}{2^k}\right)
  .\\
  \end{align*}

This allows us to finish bounding {\textcolor{violet}{\textbf{(ii)}}} as follows:
\begin{align*}
    {\textcolor{violet}{\textbf{(ii)}}}&\leq 4\frac{M_2}{\lambda m}{\textcolor{cyan}{\textbf{(A)}}} +L_f^2 {\textcolor{magenta}{\textbf{(B)}}}\\
    &=\cO(\frac{1}{2^k}).
\end{align*}

Plugging this result into {\textcolor{green}{\textbf{(2)}}}, we thus have:
\begin{align*}
    {\textcolor{green}{\textbf{(2)}}} &\leq \sum_{k=1}^K \frac{2}{p_k}\left({\textcolor{yellow}{\textbf{(i)}}}+ {\textcolor{violet}{\textbf{(ii)}}}\right)\\
    &=\sum_{k=1}^K 2^k \left(\cO(\frac{1}{2^k})+\cO(\frac{1}{2^k})\right) \\
    &=\cO(K).
\end{align*}

Coming back to the start, we get for the variance our desired result as follows:
\begin{align*}
    \EE&\left[\norm{\frac{d}{dx} F^{RT}_{t_K}-\EE\left[\frac{d}{dx} F^{RT}_{t_K}\right]}_\infty^2\right]\\
   &\leq 4 {\textcolor{red}{\textbf{(1)}}}+4{\textcolor{green}{\textbf{(2)}}}+2{\textcolor{blue}{\textbf{(3)}}}\\
   &= \cO(K)+\cO(K)+\cO(1)\\
   &= \cO(K).
\end{align*}

\end{proof}

\subsection{Proof of \texorpdfstring{\Cref{prop:polgradlinear}}{}}
\begin{proof}
In this proof we will derive an expression for \[
\frac{df(x,\pi^*_{\lambda,x},\xi)}{dx}.
\]
Applying the Dominated Convergence Theorem then directly gives the expression for the derivative of $F(x)$. As we focus on $f$ we can drop any dependence on $\xi$ below to make the proof more readable and concise.

Let 
\begin{itemize}
    \item $p(\mu \rightarrow s,t,\pi,x)$ denote the probability given the leader's choise $x$ of reaching state $s$ after $t$ steps starting at $\mu$ and following policy $\pi$
        \item $p(\mu \rightarrow s,a,t,\pi,x)$ denote the probability under choice $x$ of reaching state $s$ after $t$ steps and then taking action $a$ starting at $\mu$ and following policy $\pi$ 
            \item $p(\mu \rightarrow s,a,s',t,\pi,x)$ denote the probability under choice $x$ of reaching state $s'$ after $t$ steps having previously been in state $s$ and having taken action $a$, starting at $\mu$ and following policy $\pi$
\end{itemize}
Assuming $\overline{V}(s)$ is differentiable for all $s$, we show the following statement by induction.
\begin{equation}
 \begin{aligned}
    \frac{df(x,\pi^*_{\lambda,x},\xi)}{dx}=&\sum_s \mu_x(s) \frac{d \log\mu_x}{dx} \overline{V}(s) +\sum_{t=1}^{n+1}\sum_s\sum_a\sum_s' \gamma^t p(\mu_x \rightarrow s,a,s',t,\pi^*_{\lambda,x},x)\frac{d\log P_x(s';s,a)}{dx}\overline{V}(s')\\
&+\sum_{t=0}^{n}\gamma^t \sum_s\sum_a p(\mu_x\rightarrow s,a,t,\pi^*_{\lambda,x},x)\left(\frac{1}{\lambda}\partial_x A_{\lambda,x}^{\pi^*_{\lambda,x}}\overline{Q}(s,a)+\frac{d\overline{r}_x(s,a)}{dx}\right)\\
&+\gamma^{n+1}\sum_s p(\mu_x\rightarrow s,t,\pi^*_{\lambda,x},x) {\partial_x\overline{V}(s')}.
\end{aligned}
\label{eq:inductiongradf}
\end{equation}

Note that taking $n\rightarrow\infty$ then directly proves our claim.

\paragraph{Base case \boldsymbol{$n=0$}}
We prove the statement for $n=0$
\begin{align*}
     \frac{df(x,\pi^*_{x},\xi)}{dx}=&\frac{d}{dx}\sum_s \mu(s) \sum_a \pi^*_{x}(a;s) \overline{Q}(s,a)\\
     = &\sum_s \frac{d\mu_x(s)}{dx} \overline{V}(s) + \sum_s \mu_x(s)\sum_a\pi^*_{\lambda,x}(a;s)\left(\frac{1}{\lambda}\partial_x A_{\lambda,x}^{\pi^*_{\lambda,x}} +  \partial_x \overline{Q}(s,a)\right)\\
     = & \sum_s \frac{d\mu_x(s)}{dx} \overline{V}(s) + \sum_s \mu_x(s)\sum_a\pi^*_{\lambda,x}(a;s) \Biggl(\frac{1}{\lambda}\partial_x A_{\lambda,x}^{\pi^*_{\lambda,x}} +  \frac{d\overline{r}_x(s,a)}{dx} \\&+ \gamma \sum_{s'}\left(P_x(s';s,a)\frac{d \log P_x(s';s,a)}{dx} \overline{V}(s')+P_x(s';s,a){\partial_x\overline{V}(s')}\right)\Biggr)\\
    = &\sum_s \mu_x(s) \frac{d \log\mu_x}{dx} \overline{V}(s) +\sum_{t=1}^{1}\sum_s\sum_a\sum_{s'} \gamma^t p(\mu_x \rightarrow s,a,s',t,\pi^*_{\lambda,x},x)\frac{d\log P_x(s';s,a)}{dx}\overline{V}(s')\\
&+\sum_{t=0}^{0}\gamma^t \sum_s\sum_a p(\mu_x\rightarrow s,a,t,\pi^*_{\lambda,x},x)\left(\frac{1}{\lambda}\partial_x A_{\lambda,x}^{\pi^*_{\lambda,x}}\overline{Q}(s,a)+\frac{d\overline{r}_x(s,a)}{dx}\right)\\&+\gamma^{1}\sum_s p(\mu_x\rightarrow s,t,\pi^*_{\lambda,x},x) {\partial_x\overline{V}(s')},\\
\end{align*}
where we use the definition of $f$ in the first equality. The second equality follows from\Cref{prop:policy_gradient_configuration} and the product rule. Rearranging terms gives the third equality.

\paragraph{Induction step \boldsymbol{ $n\implies n+1$}}
Assuming \Cref{eq:inductiongradf} holds for $n$, we prove it for $n+1$.
\begin{align*}
    &\sum_s \mu_x(s) \frac{d \log\mu_x}{dx} \overline{V}(s) +\sum_{t=1}^{n+1}\sum_s\sum_a\sum_s' \gamma^t p(\mu_x \rightarrow s,a,s',t,\pi^*_{\lambda,x},x)\frac{d\log P_x(s';s,a)}{dx}\overline{V}(s')\\
&+\sum_{t=0}^{n}\gamma^t \sum_s\sum_a p(\mu_x\rightarrow s,a,t,\pi^*_{\lambda,x},x)\left(\frac{1}{\lambda}\partial_x A_{\lambda,x}^{\pi^*_{\lambda,x}}\overline{Q}(s,a)+\frac{d\overline{r}_x(s,a)}{dx}\right)\\&+\gamma^{n+1}\sum_s p(\mu_x\rightarrow s,t,\pi^*_{\lambda,x},x) {\partial_x\overline{V}(s')}\\
= &   \sum_s \mu_x(s) \frac{d \log\mu_x}{dx} \overline{V}(s) +\sum_{t=1}^{n+1}\sum_s\sum_a\sum_s' \gamma^t p(\mu_x \rightarrow s,a,s',t,\pi^*_{\lambda,x},x)\frac{d\log P_x(s';s,a)}{dx}\overline{V}(s')\\
&+\sum_{t=0}^{n}\gamma^t \sum_s\sum_a p(\mu_x\rightarrow s,a,t,\pi^*_{\lambda,x},x)\left(\frac{1}{\lambda}\partial_x A_{\lambda,x}^{\pi^*_{\lambda,x}}\overline{Q}(s,a)+\frac{d\overline{r}_x(s,a)}{dx}\right)\\&+\gamma^{n+1}\sum_s p(\mu_x\rightarrow s,t,\pi^*_{\lambda,x},x) \sum_a\pi^*_{\lambda,x}(a;s) \Biggl(\frac{1}{\lambda}\partial_x A_{\lambda,x}^{\pi^*_{\lambda,x}} +  \frac{d\overline{r}_x(s,a)}{dx} \\&+ \gamma \sum_{s'}\left(P_x(s';s,a)\frac{d \log P_x(s';s,a)}{dx} \overline{V}(s')+P_x(s';s,a)\frac{d\overline{V}(s')}{dx}\right)\Biggr)\\
=&\sum_s \mu_x(s) \frac{d \log\mu_x}{dx} \overline{V}(s) +\sum_{t=1}^{n+2}\sum_s\sum_a\sum_s' \gamma^t p(\mu_x \rightarrow s,a,s',t,\pi^*_{\lambda,x},x)\frac{d\log P_x(s';s,a)}{dx}\overline{V}(s')\\
&+\sum_{t=0}^{n+1}\gamma^t \sum_s\sum_a p(\mu_x\rightarrow s,a,t,\pi^*_{\lambda,x},x)\left(\frac{1}{\lambda}\partial_x A_{\lambda,x}^{\pi^*_{\lambda,x}}\overline{Q}(s,a)+\frac{d\overline{r}_x(s,a)}{dx}\right)\\&+\gamma^{n+2}\sum_s p(\mu_x\rightarrow s,t,\pi^*_{\lambda,x},x) {\partial_x\overline{V}(s')},
\end{align*}
which proves our claim. In the first equality we use the definition of $\overline{V}$ and the product rule. The second inequality follows from collecting terms.
\end{proof}

\subsection{Auxiliary Results}
\label{app:auxresults}
\begin{proposition}
    [Gradient of Best response Policy]
    \label{prop:policy_gradient_configuration}
    It holds that
    \begin{equation*}
        \frac{d \pi^*_{x,\xi}(s,a)}{d x}= \frac{1}{\lambda} \pi^*_{x,\xi}(a;s) \partial_x {A^{\pi^*_{x,\xi}}_{\lambda,x,\xi}(s,a)}.
    \end{equation*}
\end{proposition}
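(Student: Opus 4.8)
The plan is to differentiate the closed-form expression for the optimal policy $\pi^*_{x,\xi}(a;s) = \exp(Q^*_{\lambda,x,\xi}(s,a)/\lambda) / \sum_{a'} \exp(Q^*_{\lambda,x,\xi}(s,a')/\lambda)$ directly, treating it as the softmax of $Q^*_{\lambda,x,\xi}(s,\cdot)/\lambda$, and then to simplify the resulting expression using the definition of the advantage function. First I would fix $s$ and $a$ and write $g(a) := Q^*_{\lambda,x,\xi}(s,a)/\lambda$, so that $\pi^*_{x,\xi}(a;s) = \softmax(g)(a)$. The standard derivative of the softmax gives $\frac{d}{dx}\softmax(g)(a) = \softmax(g)(a)\bigl(g'(a) - \sum_{a'}\softmax(g)(a') g'(a')\bigr)$, which here reads
\begin{equation*}
\frac{d\pi^*_{x,\xi}(a;s)}{dx} = \pi^*_{x,\xi}(a;s)\Bigl(\tfrac{1}{\lambda}\partial_x Q^*_{\lambda,x,\xi}(s,a) - \tfrac{1}{\lambda}\sum_{a'}\pi^*_{x,\xi}(a';s)\,\partial_x Q^*_{\lambda,x,\xi}(s,a')\Bigr).
\end{equation*}
By the definition of the advantage function in \eqref{eq:adv_def}, evaluated along $\pi = \pi^*_{x,\xi}$, the bracketed quantity is exactly $\frac{1}{\lambda}\partial_x A^{\pi^*_{x,\xi}}_{\lambda,x,\xi}(s,a)$, which yields the claim.

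The one subtlety that needs care is justifying that $Q^*_{\lambda,x,\xi}$ is itself differentiable in $x$ and that we may differentiate through the softmax, i.e. that the chain rule applies. The differentiability of $Q^*_{\lambda,x,\xi}$ follows from the smoothness assumptions on $r_{x,\xi}$ and $P_{x,\xi}$ in \Cref{assum:theory} together with the fact that the soft Bellman optimality operator $\mathcal{T}^*_\lambda$ is a $\gamma$-contraction whose fixed point depends smoothly on the parameters (the implicit function theorem applied to the fixed-point equation $Q^* = \mathcal{T}^*_\lambda Q^*$, whose Jacobian $I - \gamma \,\partial_{Q}\mathcal{T}^*_\lambda$ is invertible since $\|\gamma\,\partial_Q \mathcal{T}^*_\lambda\| \le \gamma < 1$). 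Alternatively, one can invoke \Cref{prop:policygradient}, which already produces a finite closed-form expression for $\partial_x Q^{\pi}_{\lambda,x,\xi}(s,a)$ for any fixed $\pi$; combined with the contraction argument this pins down $\partial_x Q^*_{\lambda,x,\xi}$ as well. Once differentiability is in hand, the softmax is a smooth function of its logits, so the chain rule is immediate.

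I expect the main obstacle to be purely the bookkeeping around differentiability rather than the algebra: the softmax computation is a one-line exercise, so the weight of the argument rests on making precise that $x \mapsto Q^*_{\lambda,x,\xi}$ is differentiable, which is where the contraction/implicit-function-theorem reasoning enters. If the paper is content to state this as a standard consequence of \Cref{assum:theory} (as it does implicitly in the discussion preceding \Cref{thm:Fgrad}, citing \cite{chen2022adaptive}), then the proof collapses to the softmax differentiation plus recognizing the advantage function, and nothing further is required.
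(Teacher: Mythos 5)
Your proposal is correct and follows essentially the same route as the paper: the paper likewise differentiates the softmax form of $\pi^*_{x,\xi}$ (via an explicit quotient-rule computation rather than quoting the standard softmax derivative) and then identifies the resulting bracketed term with $\frac{1}{\lambda}\partial_x A^{\pi^*_{x,\xi}}_{\lambda,x,\xi}(s,a)$. Your added discussion of why $Q^*_{\lambda,x,\xi}$ is differentiable in $x$ is a reasonable extra precaution that the paper's proof leaves implicit (it simply cites \cite{chen2022adaptive} and proceeds with the algebra).
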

\begin{proof}
For a given $x,\xi$, this result was previoysly shown by \cite{chen2022adaptive}. We give a short proof below. 
    \[
    \begin{aligned}
        \frac{d \pi^*_{x,\xi}(s,a) }{d x} =&  \frac{d }{d x} \frac{\exp(Q^*_{\lambda,x,\xi}(s,a)/\lambda)}{\sum_a' \exp(Q^*_{\lambda,x,\xi}(s,a')/\lambda)}\\
        = &\ \frac{
                \exp(Q^*_{\lambda,x,\xi}(s,a)/\lambda) \cdot \frac{\partial_x Q^*_{\lambda,x,\xi}(s,a)}{\lambda} \sum_{a'} \exp(Q^*_{\lambda,x,\xi}(s,a')/\lambda)
                }{ \left( \sum_{a''} \exp(Q^*_{\lambda,x,\xi}(s,a)/\lambda) \right)^{2} }\\
                &- \frac{\exp(Q^*_{\lambda,x,\xi}(s,a)/\lambda) \sum_{a'} \exp(Q^*_{\lambda,x,\xi}(s,a')/\lambda) \frac{\partial_x Q^*_{\lambda,x,\xi}(s,a')}{\lambda}}{ \left( \sum_{a''} \exp(Q^*_{\lambda,x,\xi}(s,a)/\lambda) \right)^{2} }\\
        = &\ \pi^*(a;s) \frac{\partial_x Q^*_{\lambda,x,\xi}(s,a')}{\lambda}
            - \frac{\pi^*(a;s)\sum_{a'} \exp(Q^*_{\lambda,x,\xi}(s,a')/\lambda) \frac{\partial_x Q^*_{\lambda,x,\xi}(s,a')}{\lambda}}{ \sum_{a''} \exp(Q^*_{\lambda,x,\xi}(s,a)/\lambda) } \\
        = &\ \frac{1}{\lambda} \pi^*(a;s) \partial_x Q^*_{\lambda,x,\xi}(s,a')
        - \frac{1}{\lambda} \pi^*(a;s) \sum_{a'} \pi^*(a';s) \partial_x Q^*_{\lambda,x,\xi}(s,a') \\
        = &\ \frac{1}{\lambda} \pi^*(a;s) \left[
        \partial_x Q^*_{\lambda,x,\xi}(s,a') - \partial_x \EE_{a' \sim \pi^*(\cdot ; s)} [Q^*_{\lambda,x,\xi}(s,a')]
        \right] \\
        = &\ \frac{1}{\lambda} \pi^*(a;s) \left(\partial_x Q^*_{\lambda,x,\xi}(s,a)-\partial_x V^*_{\lambda,x,\xi}(s)\right)\\
        = &\ \frac{1}{\lambda} \pi^*(a;s) \partial_x{A^*_{\lambda,x,\xi}(s,a)}.
    \end{aligned}.
    \]
    The second equality follows from the quotient rule. The third and fourth equality follows from the definition of   $\pi^*_{x,\xi}(s,a)$. The remaining equalities leverage the definition of the advantage function.

    Note, we can replace the total with the partial derivative of $Q^*(x) = \max_{\pi} Q(\pi, x)$, due to Rockafellar's thoerem because $Q$ is continuously differentiable in  x for all $ \pi $ and $\Pi$ is compact and convex \citep{Blondel2024Elements,Rockafellar1998Variational}. 
\end{proof}

\begin{proposition}[Unbiased advantage derivative estimator]
\label{prop:unbiasedgradient}
    The output $\widehat{\partial_x A^{\pi^o_{x,\xi}}_{\lambda,x,\xi}}(s,a)$ of Algorithm \ref{alg:sample_gradient_estimation} is an unbiased estimate of $\partial_x A^{\pi^o_{x,\xi}}_{\lambda,x,\xi}(s,a)$, i.e \[\EE\left[\widehat{\partial_xA^{\pi^o_{x,\xi}}_{\lambda,x,\xi}}(s,a)\right]= \partial_x A^{\pi^o_{x,\xi}}_{\lambda,x,\xi}(s,a).\]
\end{proposition}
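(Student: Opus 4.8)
The plan is to show that each of the two pieces produced by \Cref{alg:sample_gradient_estimation}, namely $\widehat{\partial_x Q}(s,a)$ and $\widehat{\partial_x V}(s)$, is an unbiased estimator of $\partial_x Q^{\pi^o_{x,\xi}}_{\lambda,x,\xi}(s,a)$ and $\partial_x V^{\pi^o_{x,\xi}}_{\lambda,x,\xi}(s)$ respectively; linearity of expectation then gives unbiasedness of $\widehat{\partial_x A} = \widehat{\partial_x Q} - \widehat{\partial_x V}$. Since the $V$-estimator has exactly the same structure as the $Q$-estimator (with $V^\pi(s) = \sum_a \pi(a;s) Q^\pi(s,a)$, i.e. starting the trajectory at $s$ and sampling $a_0 \sim \pi(\cdot;s)$), it suffices to carry out the argument for $\widehat{\partial_x Q}(s,a)$.

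First I would recall the closed-form expression from \Cref{prop:policygradient}:
\begin{equation*}
\partial_x Q^{\pi}_{\lambda,x,\xi}(s,a) = \EE^\pi_{s,a}\Bigl[\sum_{t=0}^\infty \gamma^t \tfrac{d r_{x,\xi}(s_t,a_t)}{dx} + \gamma^{t+1}\tfrac{d\log P_{x,\xi}(s_{t+1};s_t,a_t)}{dx} V^\pi_{\lambda,x,\xi}(s_{t+1})\Bigr].
\end{equation*}
The key step is to match the two geometric truncations used in the algorithm to the two discount factors appearing here. For the reward-derivative part, note $T_Q \sim \mathrm{Geo}(1-\gamma)$ satisfies $\PP(T_Q \geq t) = \gamma^t$, so $\EE\bigl[\sum_{t=0}^{T_Q} g(s_t,a_t)\bigr] = \sum_{t=0}^\infty \gamma^t \EE^\pi[g(s_t,a_t)]$ by Fubini (justified since $|d r_{x,\xi}/dx| < K_2$ makes everything absolutely summable); applying this with $g = d r_{x,\xi}/dx$ recovers the first sum. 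For the transition-derivative part, I would condition on $T_Q$: given $T_Q = \tau$, the term $\tfrac{\gamma}{1-\gamma}\tfrac{d\log P(s_{\tau+1};s_\tau,a_\tau)}{dx}\sum_{t=\tau+1}^{\tau+T'_Q+1}\gamma^{(t-\tau-1)/2}\bigl(r(s_t,a_t)+\lambda H(\pi;s_t)\bigr)$ has conditional expectation, using $T'_Q \sim \mathrm{Geo}(1-\gamma^{1/2})$ so that $\PP(T'_Q \geq j) = \gamma^{j/2}$, equal to $\tfrac{\gamma}{1-\gamma}\tfrac{d\log P(s_{\tau+1};s_\tau,a_\tau)}{dx}\cdot\tfrac{1}{1-\gamma^{1/2}}\cdot(1-\gamma^{1/2})\sum_{j\geq 0}\gamma^{j/2}\gamma^{j/2}\EE[r(s_{\tau+1+j},a_{\tau+1+j})+\lambda H]$; the $\gamma^{j/2}\cdot\gamma^{j/2} = \gamma^j$ collapses the doubled geometric weights so that the inner sum is exactly $V^\pi_{\lambda,x,\xi}(s_{\tau+1})$ (the entropy-regularized return from $s_{\tau+1}$), and the prefactor $\tfrac{\gamma}{1-\gamma}\cdot\tfrac{1}{1}$ times the outer $\mathrm{Geo}(1-\gamma)$ weighting of $\tau$ produces $\sum_{\tau}\gamma^{\tau+1}\EE^\pi[\tfrac{d\log P}{dx}V^\pi(s_{\tau+1})]$, matching the second sum in \Cref{prop:policygradient}.

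I expect the main obstacle to be bookkeeping the two interacting geometric random variables — in particular getting the $\gamma^{(t-\tau-1)/2}$ exponent in the algorithm to line up with $\mathrm{Geo}(1-\gamma^{1/2})$ so that the "square-root trick" yields an unbiased estimate of the value function tail rather than something off by a factor. The cleanest route is to state and use the elementary identity: if $N \sim \mathrm{Geo}(1-\beta)$ (so $\PP(N\geq k)=\beta^k$) independent of a bounded sequence $(c_k)_{k\geq 0}$, then $\EE\bigl[\sum_{k=0}^{N} \beta^{-k} c_k\bigr] = \sum_{k=0}^\infty c_k$ whenever $\sum_k|c_k| < \infty$; applying it once with $\beta = \gamma$ and once with $\beta = \gamma^{1/2}$ (the latter to $c_j = \gamma^{j}(r(s_{\tau+1+j},a_{\tau+1+j})+\lambda H(\pi;s_{\tau+1+j}))$, using $\gamma^{-j/2}\gamma^j = \gamma^{j/2}$) dispatches both layers. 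Boundedness/summability throughout is guaranteed by \Cref{assum:theory} ($|r_{x,\xi}|<\overline R$, $\|\partial_x\log P_{x,\xi}\|_\infty<K_1$, $\|\partial_x r_{x,\xi}\|_\infty<K_2$, and $H(\pi;s)\le\log|\cA|$), which also legitimizes every exchange of expectation and infinite sum. Subtracting the analogous identity for $\widehat{\partial_x V}(s)$ and invoking linearity of expectation completes the proof.
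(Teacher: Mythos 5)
Your proposal is correct and follows essentially the same route as the paper's proof: split $\widehat{\partial_x A}=\widehat{\partial_x Q}-\widehat{\partial_x V}$, verify unbiasedness of the $Q$-part via the identity $\EE[\mathds{1}_{t\le N}]=\beta^t$ for $N\sim\mathrm{Geo}(1-\beta)$ applied once with $\beta=\gamma$ for the reward sum and once with $\beta=\gamma^{1/2}$ so that the doubled weights $\gamma^{t/2}\cdot\gamma^{t/2}=\gamma^t$ reconstruct $V^\pi_{\lambda,x,\xi}(s_{T_Q+1})$ in the tail, with Fubini and dominated convergence justified by the boundedness in \Cref{assum:theory}. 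The paper organizes this by first proving the tail-sum identity as a standalone step and then handling the two summands separately, but the decomposition, the square-root trick, and the matching against \Cref{prop:policygradient} are the same.
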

\begin{proof}
We drop any dependence on $x,\xi,\pi^o_{x,\xi}$ for notational clarity. We further emphasize that the trick of truncating a rollout after a geomtrically sampled time to obtain unbiased gradients is commonly used in the RL literature for obtaining unbiased estimates of the standard policy gradient \cite{Zhang2020Global}.

We show that the estimator $\widehat{\partial_x Q_{\lambda}}(s, a)$ given by \[
\sum_{t=0}^{T_Q} \frac{d}{dx} r(s_t,a_t)+
\frac{\gamma}{1-\gamma} \frac{d}{dx} \log P(s_{T_Q+1};s_{T_Q},a_{T_Q}) \sum_{t=T_Q+1}^{T_Q+T'_Q+1} \gamma^{(t-T_Q-1)/2} \left(r(s_t,a_t)+\lambda H(\pi(\cdot ;s_t))\right)
\]
is unbiased. The same argument then holds for $\widehat{\frac{d}{dx} V_{\lambda}}(s)$ and implies that $\widehat{\frac{d}{dx} A_{\lambda}}(s,a)$ is unbiased.
First of all, we have:
\begin{align}
    &\EE\left[\sum_{t=0}^{T'_Q} \gamma^{(t)/2} \left(r(s_t,a_t)+\lambda H(\pi(\cdot ;s_t))\right)\right]\nonumber\\
    =&\EE_{T_Q'}\EE_{s}^{\pi}\left[\sum_{t=0}^{T'_Q} \gamma^{(t)/2} \left(r(s_t,a_t)+\lambda H(\pi(\cdot ;s_t))\right)\right]\nonumber\\
    =&\EE_{s}^{\pi}\left[\EE_{T_Q'}\sum_{t=0}^{T'_Q} \gamma^{(t)/2} \left(r(s_t,a_t)+\lambda H(\pi(\cdot ;s_t))\right)\right]\nonumber\\
    =&\EE_{s}^{\pi}\left[\sum_{t=0}^{\infty}\EE_{T_Q'}\left[\mathds{1}_{t\leq T_Q'}\right] \gamma^{(t)/2} \left(r(s_t,a_t)+\lambda H(\pi(\cdot ;s_t))\right)\right]\label{eq:proof_prop_3}\\
    =&\EE_{s}^{\pi}\left[\sum_{t=0}^{\infty}\gamma^t \left(r(s_t,a_t)+\lambda H(\pi(\cdot ;s_t))\right)\right]\nonumber\\
    =& V_{\lambda}(s) \label{eq:proof_prop_3_2},
\end{align}
where we use Fubini's theorem for \cref{eq:proof_prop_3} and the Dominated Convergence Theorem and the fact that $T_Q' \sim \text{Geo}(1-\gamma^{0.5})$ in \cref{eq:proof_prop_3_2}.
Because $T_Q$ and $T_Q'$ are sampled independently, it immediately follows that

\begin{align*}
   & \EE_{T_Q,T_Q'}\EE_{s,a}^{\pi}\Bigg[\sum_{t=0}^{T_Q} \frac{d}{dx} r(s_t,a_t)\\
   &+
\frac{\gamma}{1-\gamma} \frac{d}{dx} \log P(s_{T_Q+1};s_{T_Q},a_{T_Q}) \sum_{t=T_Q+1}^{T_Q+T'_Q+1} \gamma^{(t-T_Q-1)/2} \left(r(s_t,a_t)+\lambda H(\pi(\cdot ;s_t))\right)\Bigg]\\
=&\EE_{T_Q}\EE_{s,a}^{\pi}\Biggl[
\underbrace{\sum_{t=0}^{T_Q} \frac{d}{dx} r(s_t,a_t)}_{\color{blue}\text{(1)}}
+
\underbrace{\frac{\gamma}{1-\gamma} \frac{d\log P(s_{T_Q+1};s_{T_Q},a_{T_Q})}{dx}  V_{\lambda}^{\pi}(s_{T_Q})}_{\color{red}\text{(2)}}
\Biggr].
\end{align*}
We seperately show that the two summands are unbaised estimates. Then by linearity of expectation the result follows.
For $\color{blue}\text{(1)}$ using Fubini's theorem and Dominated Convergence Theorem it holds that
\begin{align*}
    \EE_{T_Q}\EE_{s,a}^{\pi}\left[
\sum_{t=0}^{T_Q} \frac{d}{dx} r(s_t,a_t) \right]&=\EE_{s,a}^{\pi}\left[
\sum_{k=0}^{\infty} (1-\gamma) \gamma^k \sum_{t=0}^k \frac{d}{dx} r(s_t,a_t) \right]\\
&= (1-\gamma) \EE_{s,a}^{\pi}\left[
\sum_{t=0}^{\infty}  \sum_{k=t}^\infty \gamma^k\frac{d}{dx} r(s_t,a_t) \right]\\
&= (1-\gamma) \EE_{s,a}^{\pi}\left[
\sum_{t=0}^{\infty}  \frac{\gamma^t}{1-\gamma} \frac{d}{dx} r(s_t,a_t) \right]\\
&=\EE_{s,a}^{\pi}\left[
\sum_{t=0}^{\infty}  \gamma^t \frac{d}{dx} r(s_t,a_t) \right].
\end{align*}

Similarly for $\color{red}\text{(2)}$, we have using Fubini and Dominated Convergence Theorem that
\begin{align*}
    &\EE_{T_Q}\EE_{s,a}^{\pi}\left[
\frac{\gamma}{1-\gamma} \frac{d\log P(s_{T_Q+1};s_{T_Q},a_{T_Q})}{dx}  V_{\lambda}^{\pi}(s_{T_Q})
\right]\\
=&\frac{\gamma}{1-\gamma} \EE_{s,a}^{\pi}\EE_{T_Q}\left[ \sum_{t=0}^{\infty} \mathds{1}_{t=T_Q}
 \frac{d\log P(s_{t+1};s_{t},a_{t})}{dx}  V_{\lambda}^{\pi}(s_{t})
\right]\\
=& \EE_{s,a}^{\pi}\left[ \sum_{t=0}^{\infty} \gamma^{t+1}
 \frac{d\log P(s_{t+1};s_{t},a_{t})}{dx}  V_{\lambda}^{\pi}(s_{t})
\right].
\end{align*}
Plugging these results back in, we have
\begin{align*}
    &\EE_{T_Q}\EE_{s,a}^{\pi}\left[\sum_{t=0}^{T_Q} \frac{d}{dx} r(s_t,a_t)
+\frac{\gamma}{1-\gamma} \frac{d\log P(s_{T_Q+1};s_{T_Q},a_{T_Q})}{dx}  V_{\lambda}^{\pi}(s_{T_Q})
\right]\\
=&\EE_{s,a}^{\pi}\left[\sum_{t=0}^{\infty}  \gamma^t \frac{d}{dx} r(s_t,a_t)+ \gamma^{t+1}
 \frac{d\log P(s_{t+1};s_{t},a_{t})}{dx}  V_{\lambda}^{\pi}(s_{t})
\right]\\
=& {\partial_x{Q^{\pi}_{\lambda}(s,a)}},
\end{align*}
which proves the proposition.
\end{proof}

\begin{proposition}[Unbiased gradient estimator for $F$]
\label{prop:unbiaseddecomposedgradient}
The gradient estimator described in \Cref{alg:decomp_gradient_estimation} is unbiased for the given policy $\pi^o_{x,\xi}$.
\end{proposition}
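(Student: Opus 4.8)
The statement to prove is that, for fixed $x$ and $\xi$ and an oracle $o$ returning rollouts of the (now fixed) policy $\pi^o_{x,\xi}$, the output of \Cref{alg:decomp_gradient_estimation} has expectation equal to the expression obtained from \eqref{eq:decomposedhypergrad} by replacing $\pi^*_{x,\xi}$ with $\pi^o_{x,\xi}$ everywhere — so that $A^{\pi^o_{x,\xi}}_{\lambda,x,\xi}$, the trajectory law, and the unregularized value functions $\overline V_{x,\xi},\overline Q_{x,\xi}$ are all taken with respect to $\pi^o_{x,\xi}$ — and dropping the outer $\EE_\xi$. The plan is, first, to use linearity of expectation to split $\widehat{\tfrac{dF}{dx}}$ into its three summands — the $\partial_x\overline r$ telescoping sum, the $\widehat{\partial_x A}$ term, and the $\partial_x\log P$ term — and to verify each separately; any correlation between the summands is immaterial for the expectation.

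The only analytic tool needed is the randomized-truncation identity already established inside the proof of \Cref{prop:unbiasedgradient}: if $T\sim\mathrm{Geo}(1-\gamma)$ then $\EE_T[\sum_{t=0}^{T} g(t)]=\sum_{t\ge0}\gamma^{t}g(t)$ (by summing the tail probabilities $\PP(T\ge t)=\gamma^{t}$) and $\EE_T[\tfrac{1}{1-\gamma}g(T)]=\sum_{t\ge0}\gamma^{t}g(t)$; and if $T'\sim\mathrm{Geo}(1-\gamma^{1/2})$ then $\EE_{T'}[\sum_{u=0}^{T'}\gamma^{u/2}g(u)]=\sum_{u\ge0}\gamma^{u}g(u)$. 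The Fubini/dominated-convergence exchanges these require are exactly those carried out in the proof of \Cref{prop:unbiasedgradient}, justified by the boundedness hypotheses of \Cref{assum:theory}. I will also use repeatedly that $T$, $T'$, the oracle trajectory, and the independent rollouts drawn inside \texttt{GradientEstimator} are mutually independent, and that the $\pi^o_{x,\xi}$-rollout is Markov, so that conditioning on $\{T=t\}$ together with the prefix $\mathcal F_t:=\sigma(s_0,a_0,\dots,s_{t-1},a_{t-1},s_t)$ leaves the continuation $(a_t,s_{t+1},a_{t+1},\dots)$ distributed as a fresh $\pi^o_{x,\xi}$-rollout from $s_t$; and I will write $\overline V_{x,\xi}(s)=\EE^{\pi^o_{x,\xi}}_{s}[\sum_u\gamma^u\overline r_{x,\xi}(s_u,a_u)]$ and $\overline Q_{x,\xi}(s,a)$ defined analogously with the initial action fixed to $a$.

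Then I treat the three terms in turn. The $\partial_x\overline r$ sum is handled by the first identity directly: $\EE[\sum_{t=0}^{T}\partial_x\overline r(s_t,a_t)]=\EE^{\pi^o_{x,\xi}}_{s_0\sim\mu_{x,\xi}}[\sum_t\gamma^t\partial_x\overline r_{x,\xi}(s_t,a_t)]$. For the $\partial_x\log P$ term I condition on $(T,\mathcal F_T)$: the factor $\partial_x\log P_{x,\xi}(s_T;s_{T-1},a_{T-1})$ is $\mathcal F_T$-measurable, and by the $T'$-identity the inner sum $\sum_{t'=T}^{T+T'}\gamma^{(t'-T)/2}\overline r_{x,\xi}(s_{t'},a_{t'})$ has conditional mean $\overline V_{x,\xi}(s_T)$ (the first action $a_T$ of the continuation is still random under $\mathcal F_T$); multiplying, applying $\EE_T[\tfrac{1}{1-\gamma}(\cdot)]$ and averaging over the prefix recovers $\EE^{\pi^o}_{s_0\sim\mu}[\sum_t\gamma^t\partial_x\log P_{x,\xi}(s_t;s_{t-1},a_{t-1})\overline V_{x,\xi}(s_t)]$, where the $t=0$ summand is read with the convention $\mu_{x,\xi}(s_0)=P_{x,\xi}(s_0;a_{-1},s_{-1})$, matching the algorithm's $\partial_x\log P(s_T,a_{T-1},s_{T-1})$ at $T=0$. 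For the $\widehat{\partial_x A}$ term I condition on $(T,\mathcal F_T,a_T)$: given this, $\widehat{\partial_x A^{\pi^o_{x,\xi}}_{\lambda,x,\xi}}(s_T,a_T)$ — being built from the fresh \texttt{GradientEstimator} rollouts — and the inner reward sum — being built from the oracle continuation past $s_T$ — are conditionally independent, so the conditional mean of the product factors into $\partial_x A^{\pi^o_{x,\xi}}_{\lambda,x,\xi}(s_T,a_T)$ (unbiasedness of \texttt{GradientEstimator}, \Cref{prop:unbiasedgradient}) times $\overline Q_{x,\xi}(s_T,a_T)$ (the $T'$-identity, with $a_T$ now fixed as the initial action); applying $\EE_T[\tfrac{1}{1-\gamma}(\cdot)]$ and averaging over the prefix gives $\EE^{\pi^o}_{s_0\sim\mu}[\sum_t\gamma^t\tfrac{1}{\lambda}\partial_x A^{\pi^o}_{\lambda,x,\xi}(s_t,a_t)\overline Q_{x,\xi}(s_t,a_t)]$. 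Summing the three contributions is exactly \eqref{eq:decomposedhypergrad} with $\pi^o_{x,\xi}$ in place of $\pi^*_{x,\xi}$.

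The routine content is the randomized-truncation identities and the Fubini/DCT bookkeeping, all inherited from \Cref{prop:unbiasedgradient}. The step I expect to require the most care is the conditioning structure: one must pick, for each summand, the exact $\sigma$-algebra that renders the ``prefix'' multiplier measurable while leaving precisely the right downstream randomness — conditioning \emph{before} $a_T$ for the $\partial_x\log P$ term (so its reward sum yields $\overline V$) but \emph{after} $a_T$ for the $\widehat{\partial_x A}$ term (so its reward sum yields $\overline Q$) — and then verifying the conditional independence of the fresh \texttt{GradientEstimator} rollouts from the continuation of the oracle trajectory, which is what lets the product of the two random factors in that term split.
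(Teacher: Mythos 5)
Your proposal is correct and follows essentially the same route as the paper's proof: split the estimator into its three summands by linearity, apply the geometric-truncation identities from the proof of \Cref{prop:unbiasedgradient}, and invoke the unbiasedness of \texttt{GradientEstimator} for the advantage-derivative factor. You merely spell out the conditioning and conditional-independence bookkeeping that the paper compresses into the remark that terms (1) and (3) follow directly from \Cref{prop:unbiasedgradient} and term (2) is handled ``almost identically.''
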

\begin{proof}
   We need to show that:
\begin{align*}
  & \EE\Biggl[\underbrace{\left(\sum_{t=0}^T \frac{d}{dx} \overline{r}(s_t,a_t) \right)}_{\color{blue}\text{(1)}}  + \underbrace{\frac{1}{\lambda(1-\gamma)} \partial_x \widehat{A^{\pi^o_{x,\xi}}_{\lambda,x,\xi}} (s_T,a_T) \sum_{t'=T}^{T+T'} \gamma^{(t-T)/2} \overline{r}(s_{t'},a_{t'})}_{\color{red}\text{(2)}}\\
  &+\underbrace{\frac{1}{1-\gamma} \partial_x \log P(s_T,a_{T-1},s_{T-1}) \sum_{t'=T}^{T+T'} \gamma^{(t'-T)/2}\overline{r}(s_{t'},a_{t'})}_{\color{green}\text{(3)}}\Biggr]\\
  =&\EE_{\xi}\Bigg[ \EE^{\pi^o_{x,\xi}}_{s_0\sim \mu} \Bigg[ \sum_{t=0}^\infty \gamma^t \Biggl( \frac{1}{\lambda} \partial_x A_{\lambda,x,\xi}^{\pi^o_{x,\xi}}(s_t,a_t) \overline{Q}(s_t,a_t)+\frac{d}{dx} \overline{r}(s_t,a_t) + \partial_x \log P_{x,\xi}(s_t,a_{t-1},s_{t-1}) \overline{V}(s_t)\Biggr) \Bigg] \Bigg].
\end{align*}

We can show the claim seperately for {\color{blue}\text{(1)}}, {\color{red}\text{(2)}} and {\color{green}\text{(3)}}. Note for {\color{blue}\text{(1)}} and {\color{green}\text{(3)}} the claim directly follows from the proof of \Cref{prop:unbiasedgradient}. And the proof for {\color{red}\text{(2)}} works almost identical to the one for {\color{green}\text{(3)}}, relying on the fact that a truncation via a geometric distribution is identical to an infinite trajectory with a discount factor.
\end{proof}

\subsection{Convergence Results for Popular RL Algorithms}
\label{app:convergenceRL}
For the next Proposition, consider the following soft Bellmann optimality operator, which has been shown to be a contraction \cite{Dai2018SBEED,Nachum2017Bridging}. 
\begin{equation}
\label{eq:valuebellmannop}
\left(\mathcal{T}^*_\lambda V_\lambda\right)(s):=\lambda \log \left(\sum_{a \in \mathcal{A}} \exp \left(\frac{r(s, a)+\gamma \mathbb{E}_{s^{\prime} \mid s, a}\left[V_\lambda\left(s^{\prime}\right)\right]}{\lambda}\right)\right).
\end{equation}
Using \Cref{eq:valuebellmannop}, one can define a standard soft value iteration algorithm (see \Cref{alg:valueiteration} in \Cref{app:algorithms}). We show soft value iteration satisfies \Cref{assumption:algorithm}.
\begin{proposition}
    \label{prop:valueiterconv}
    Algorithm \ref{alg:valueiteration} converges, such that $\norm{\pi^*_{x,\xi}-\pi^o_{x,\xi}}^2_{\infty} \leq \delta^2$ after $T$ iterations, where $T=\mathcal{O}(\log 1/\delta)$.
\end{proposition}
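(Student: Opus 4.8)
The strategy is to chain together three elementary facts: (i) contraction of the soft Bellman optimality operator gives geometric convergence of the value iterates; (ii) one Bellman backup turns a value error into a $Q$-error of the same order; and (iii) the softmax map from $Q$-functions to policies is Lipschitz, so the policy error is controlled by the $Q$-error. First I would recall from \cite{Dai2018SBEED,Nachum2017Bridging} that $\mathcal{T}^*_\lambda$ defined in \eqref{eq:valuebellmannop} is a $\gamma$-contraction in $\|\cdot\|_\infty$ with unique fixed point $V^*_{\lambda,x,\xi}$. Since \Cref{alg:valueiteration} initializes $V_\lambda=0$ and applies $\mathcal{T}^*_\lambda$ for $T$ steps, this yields $\|V^{(T)}_\lambda - V^*_{\lambda,x,\xi}\|_\infty \le \gamma^T \|V^*_{\lambda,x,\xi}\|_\infty$. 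I would then bound $\|V^*_{\lambda,x,\xi}\|_\infty \le (\overline{R}+\lambda\log|\mathcal{A}|)/(1-\gamma)$, which follows from $|r_{x,\xi}|<\overline{R}$ (\Cref{assum:theory}) and $0\le H(\pi;s)\le \log|\mathcal{A}|$ by Jensen's inequality, exactly as already used in the proof of \Cref{thm:convergence}.

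Next, the policy returned by \Cref{alg:valueiteration} is the softmax of $Q_\lambda(s,a) = r_{x,\xi}(s,a) + \gamma\,\mathbb{E}_{s'|s,a}[V^{(T)}_\lambda(s')]$, while $\pi^*_{x,\xi}(a;s)\propto\exp(Q^*_{\lambda,x,\xi}(s,a)/\lambda)$ with $Q^*_{\lambda,x,\xi}(s,a) = r_{x,\xi}(s,a) + \gamma\,\mathbb{E}_{s'|s,a}[V^*_{\lambda,x,\xi}(s')]$. Subtracting, $\|Q_\lambda - Q^*_{\lambda,x,\xi}\|_\infty \le \gamma\|V^{(T)}_\lambda - V^*_{\lambda,x,\xi}\|_\infty \le \gamma^{T+1}\,(\overline{R}+\lambda\log|\mathcal{A}|)/(1-\gamma)$. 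I would then invoke \cite{Mei2020Global}[Lemma 24] (the same bound already used in the proof of \Cref{thm:fasterconv}), which gives $\|\pi^o_{x,\xi}-\pi^*_{x,\xi}\|_\infty \le \frac{1}{\lambda}\|Q_\lambda - Q^*_{\lambda,x,\xi}\|_\infty$, hence
\begin{equation*}
\|\pi^o_{x,\xi}-\pi^*_{x,\xi}\|_\infty \le \frac{\gamma^{T+1}}{\lambda}\cdot\frac{\overline{R}+\lambda\log|\mathcal{A}|}{1-\gamma}.
\end{equation*}

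Finally I would solve for $T$: requiring the right-hand side to be at most $\delta$ amounts to $T+1 \ge \log\!\big(\tfrac{\overline{R}+\lambda\log|\mathcal{A}|}{\delta\lambda(1-\gamma)}\big)\big/\log(1/\gamma)$, which is $\mathcal{O}(\log(1/\delta))$; squaring gives $\|\pi^*_{x,\xi}-\pi^o_{x,\xi}\|_\infty^2 \le \delta^2$, uniformly over $x,\xi$ (the bound is deterministic so the expectation in \Cref{assumption:algorithm} is trivially satisfied). I do not expect a genuine obstacle here — the only points requiring a little care are bookkeeping the off-by-one between the last value update and the $Q$ used to form the output policy, and being explicit that the constants ($\overline{R}$, $\lambda$, $\log|\mathcal{A}|$, $1-\gamma$) are all absorbed into the $\mathcal{O}(\cdot)$ since they are $\delta$-independent.
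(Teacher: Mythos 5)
Your proposal is correct and follows essentially the same route as the paper's proof: geometric convergence of the value iterates via the contraction of $\mathcal{T}^*_\lambda$, the bound $\|V^*_\lambda\|_\infty \le (\overline{R}+\lambda\log|\mathcal{A}|)/(1-\gamma)$, passing from the $V$-error to the $Q$-error of the output, and then \cite{Mei2020Global}[Lemma 24] to convert the $Q$-error into a policy error. Your version is marginally tighter (the extra factor of $\gamma$ from the final Bellman backup and the explicit off-by-one bookkeeping), but this does not change the argument or the $\mathcal{O}(\log(1/\delta))$ rate.
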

\begin{proof}
    From \cite{Mei2020Global}[Lemma 24] we have 
    \[
    \norm{\pi^o-\pi^*}_\infty \leq  \norm{\pi^o-\pi^*}_1 \leq \frac{1}{\lambda} \norm{Q^T_\lambda - Q^*_{\lambda}}_\infty.
    \]
    Moreover 
    \[
    \frac{1}{\lambda} \norm{Q^T_\lambda - Q^*_{\lambda}}_\infty \leq \frac{1}{\lambda} \norm{V^T_\lambda - V^*_{\lambda}}_\infty \leq \frac{\gamma^T}{\lambda} \norm{V^*_{\lambda}} \leq \frac{\gamma^T}{\lambda(1-\gamma)} \left(\overline{R}+ \lambda \log |\mathcal{A}| \right),
    \]
    where we use the contraction property shown in \cite{Dai2018SBEED,Nachum2017Bridging} and the fact that we instantiate $V_\lambda$ with 0, such that no value iterate can ever be larger than  higher than $\left(\overline{R}+ \lambda \log |\mathcal{A}| \right)$.
    The claim follows from $\delta \leq \cO(\gamma^T)$.
\end{proof}

As soft value iteration assumes knowledge of the transition function and scales badly when the state and action space are large, in practice stochastic methods such as soft Q-learning are used instead. For this method, consider the soft Bellman state-action optimality operator~\cite{Asadi2017alternative,Haarnoja2017Reinforcement}:
\begin{equation}
\label{eq:bellmanstateaction}
\left(\mathcal{T}_\lambda^* Q_\lambda\right)(s,a):= r(s,a) + \gamma \EE_{s'\sim P(\cdot|s,a)} \left[\lambda \log \left(\sum_{a' \in \mathcal{A}} \exp \left(\frac{Q_\lambda(s,a')}{\lambda}\right)\right)\right].
\end{equation}
We can use \Cref{eq:bellmanstateaction} to run soft Q-learning, as described in \Cref{alg:qlearning} in \Cref{app:algorithms}. Equivalently to soft value iteration, we can show soft Q-learning satisfies \Cref{assumption:algorithm}.

\begin{proposition}
\label{prop:qlearningconv}
    Let $\pi_B$ be sufficiently exploratory, such that the induced Markov chain is ergodic. Then soft Q-learning converges, such that $\EE_{o}\left[\norm{\pi^*_{x,\xi}-\pi^o_{x,\xi}}^2_{\infty}\right] \leq \delta^2$ after $T$ iterations, where $T=\mathcal{O}(\frac{\log (1/\delta)}{\delta^2})$.
\end{proposition}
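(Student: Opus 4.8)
\noindent\textit{Proof proposal.} The plan is to first reduce the policy error to the action-value error, and then read off the rate from the finite-time theory of asynchronous stochastic approximation. By \cite{Mei2020Global}[Lemma 24], exactly as in the proof of \Cref{prop:valueiterconv}, the softmax map is $\tfrac1\lambda$-Lipschitz, so $\norm{\pi^o_{x,\xi}-\pi^*_{x,\xi}}_\infty \le \tfrac1\lambda \norm{Q^T_\lambda - Q^*_\lambda}_\infty$, where $Q^T_\lambda$ denotes the iterate of \Cref{alg:qlearning} after $T$ steps. Squaring this pointwise bound and taking expectations, it suffices to show $\EE\big[\norm{Q^T_\lambda - Q^*_\lambda}_\infty^2\big] \le \lambda^2\delta^2$ once $T = \cO(\log(1/\delta)/\delta^2)$, with all constants uniform in $x,\xi$.

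The second step is to identify \Cref{alg:qlearning} as an asynchronous stochastic-approximation recursion for the fixed point of the soft Bellman optimality operator $\mathcal{T}^*_\lambda$ of \Cref{eq:bellmanstateaction}: each update modifies only the coordinate $(s_t,a_t)$ drawn by the behavioural policy, and the conditional expectation of the increment equals $\mathcal{T}^*_\lambda Q_\lambda(s_t,a_t) - Q_\lambda(s_t,a_t)$. Since $\lambda\log\sum_{a}\exp(\cdot/\lambda)$ is a smoothed maximum and hence $1$-Lipschitz in $\norm{\cdot}_\infty$, $\mathcal{T}^*_\lambda$ is a $\gamma$-contraction in $\norm{\cdot}_\infty$ with unique fixed point $Q^*_\lambda$ \cite{Dai2018SBEED,Nachum2017Bridging}. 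I would also record that the iterates stay bounded: with $Q_\lambda$ initialised at $0$ and step sizes $\alpha_t\in[0,1]$, an induction on $t$ using $|r_{x,\xi}|\le\overline R$ and $0\le H(\pi;\cdot)\le\log|\cA|$ gives $-\tfrac{\overline R}{1-\gamma}\le Q_\lambda(s,a)\le \tfrac{\overline R+\lambda\log|\cA|}{1-\gamma}$ for every iterate and every $(s,a)$ (the same bound as in \Cref{eq:minmax_softmaxpolicy}); consequently the martingale-difference noise in the recursion has uniformly bounded conditional second moment.

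The third step applies the finite-time analysis of asynchronous Q-learning. Ergodicity of the chain induced by $\pi_B$ yields a finite mixing time $t_{\mathrm{mix}}$ and a strictly positive minimal asymptotic state-action visitation frequency $\mu_{\min}>0$; under \Cref{assum:theory} and a uniform version of the ``sufficiently exploratory'' hypothesis these can be taken independent of $x,\xi$. Choosing a $(1-\gamma)$-rescaled decreasing step-size schedule (e.g.\ $\alpha_t = \Theta(1/((1-\gamma)(t+1)))$), the standard finite-time bound for asynchronous Q-learning — whose proof only invokes $\gamma$-contraction of the Bellman operator, boundedness of the iterates and coverage of all state-action pairs, all established above, so it transfers verbatim to the soft version — gives
\begin{equation*}
  \EE\big[\norm{Q^T_\lambda - Q^*_\lambda}_\infty^2\big] \;=\; \cO\!\left(\frac{C\log T}{T}\right),
\end{equation*}
with $C$ polynomial in $(1-\gamma)^{-1}$, $\lambda^{-1}$, $\overline R$, $\log|\cA|$, $t_{\mathrm{mix}}$, $\mu_{\min}^{-1}$ and $\log(|\cS||\cA|)$. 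Forcing the right-hand side below $\lambda^2\delta^2$ and using $\log T = \cO(\log(1/\delta))$ yields $T = \cO(\log(1/\delta)/\delta^2)$; combined with the first step this is the claim.

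The main obstacle is the finite-time convergence rate for the asynchronous \emph{soft} Q-learning recursion itself: rather than re-deriving it I would cite an existing asynchronous Q-learning sample-complexity bound and verify that replacing the hard $\max$ by the $1$-Lipschitz log-sum-exp operator leaves the argument intact, while carefully tracking that all constants are uniform over $(x,\xi)$ thanks to \Cref{assum:theory}. A secondary subtlety is the step-size schedule — a plain $\alpha_t=1/(t+1)$ can introduce an extra polynomial blow-up in $(1-\gamma)^{-1}$ or $\mu_{\min}^{-1}$, so one must use the rescaled (or a carefully tuned constant-then-decaying) schedule to land at the advertised $\tilde{\cO}(\delta^{-2})$ iteration count.
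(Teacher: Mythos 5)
Your proposal follows essentially the same route as the paper: reduce the policy error to the $Q$-error via \cite{Mei2020Global}[Lemma 24], cast \Cref{alg:qlearning} as asynchronous stochastic approximation for the $\gamma$-contractive soft Bellman operator with bounded iterates and martingale-difference noise, and invoke a finite-time asynchronous Q-learning bound under the ergodicity/coverage hypothesis with a rescaled decaying step size --- this is exactly what the paper does by verifying the assumptions of the theorem of \cite{Qu2020Finite}. The one step you elide is that the available finite-time result is a \emph{high-probability} bound on $\norm{Q^T_\lambda - Q^*_\lambda}_\infty$, not an in-expectation second-moment bound; the paper bridges this by setting the failure probability to $p=\cO(\delta^2)$ and controlling the failure event with the almost-sure bound $\norm{Q^T_\lambda - Q^*_\lambda}_\infty \le \tfrac{2(\overline R+\lambda\log|\cA|)}{1-\gamma}$, which you have already established, so the fix is routine but should be stated.
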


We use the following Theorem from \cite{Qu2020Finite} to prove our claim:

{\itshape Theorem \cite{Qu2020Finite} Let $ x \in \mathbb{R}^{d}$, and $F$ : $\mathbb{R}^{d} \rightarrow \mathbb{R}^{d}$ be an operator. We use $F_i$ to denote the $i$ 'th entry of $F$. We consider the following stochastic approximation scheme that keeps updating $x(t) \in \mathbb{R}^{d}$ starting from $x(0)$ being the all zero vector,
$$
\begin{array}{ll}
x_i(t+1)=x_i(t)+\alpha_t\left(F_i(x(t))-x_i(t)+w(t)\right) & \text { for } i=i_t, \\
x_i(t+1)=x_i(t) & \text { for } i \neq i_t,
\end{array}
$$
where $i_t \in \{1,\dots,d\}$ is a stochastic process adapted to a filtration $\mathcal{F}_t$, and $w(t)$ is some noise. Assume the following:

Assumption 1 (Contraction) (a) Operator $F$ is $\gamma$ contraction in $\norm{\cdot}_\infty$, i.e. for any $x, y \in \mathbb{R}^{d}$, $\norm{F(x)-F(y)}_\infty \leq \gamma\norm{x-y}_\infty$. (b) There exists some constant $C>0$ s.t. $\norm{F(x)}_\infty \leq \gamma\norm{x}_\infty + C, \forall x \in \mathbb{R}^{d}$.

Assumption 2 (Martingale Difference Sequence) $w(t)$ is $\mathcal{F}_{t+1}$ measurable and satisfies $\mathbb{E} w(t) \mid \mathcal{F}_t=$ 0 . Further, $|w(t)| \leq \bar{w}$ almost surely for some constant $\bar{w}$.

Assumption 3 (Sufficient Exploration) There exists a $\sigma \in(0,1)$ and positive integer, $\tau$, such that, for any $i \in \mathcal{N}$ and $t \geq \tau, \mathbb{P}\left(i_t=i \mid \mathcal{F}_{t-\tau}\right) \geq \sigma$.

Suppose Assumptions 1,2 and 3 hold. Further, assume there exists constant $\bar{x} \geq\left\|x^*\right\|_\infty$ s.t. $\forall t,\|x(t)\|_\infty \leq \bar{x}$ almost surely. Let the step size be $\alpha_t=\frac{h}{t+t_0}$ with $t_0 \geq \max (4 h, \tau)$, and $h \geq \frac{2}{\sigma(1-\gamma)}$. Then, with probability at least $1-\delta$,
$$
\left\|x(T)-x^*\right\|_\infty \leq \frac{12 \bar{\epsilon}}{1-\gamma} \sqrt{\frac{(\tau+1) h}{\sigma}} \sqrt{\frac{\log \left(\frac{2(\tau+1) T^2 n}{\delta}\right)}{T+t_0}}+\frac{4}{1-\gamma} \max \left(\frac{16 \bar{\epsilon} h \tau}{\sigma}, 2 \bar{x}\left(\tau+t_0\right)\right) \frac{1}{T+t_0},
$$
where $\bar{\epsilon}=2 \bar{x}+C+\bar{w}$.
}

\begin{proof}
Our algorithm can be seen as a stochastic approximation scheme where we update $Q$ asynchronously just like $x$ above in the following way
$$
\begin{array}{ll}
Q_{s_t,a_t}(t+1)=Q_{s_t,a_t}(t)+\alpha_t\left(F_{s_t,a_t}(Q(t))-Q_{s_t,a_t}(t)+w_t\right) & { }  \\
Q_{s,a}(t+1)=Q_{s,a}(t) & \text { for } s,a \neq s_t,a_t,
\end{array}
$$
where
\[F_{s_t,a_t}(Q)=r(s,a) + \gamma \EE_{s'\sim P(\cdot|s,a)} \left[\lambda \log \left(\sum_{a' \in \mathcal{A}} \exp \left(\frac{Q(s,a')}{\lambda}\right)\right)\right],\] 
and the errors:
\begin{align*}
    w_t =& r(s_t,a_t)+\gamma \lambda \log \left(\sum_{a'\in \mathcal{A}} \exp \left(\frac{Q_\lambda(s_{t+1},a')}{\lambda}\right)\right)\\
    &- r(s_t,a_t)+\gamma \EE_{s'\sim P(\cdot;s_t,a_t)} \left[\lambda \log \left(\sum_{a'\in \mathcal{A}} \exp \left(\frac{Q_\lambda(s_{t+1},a')}{\lambda}\right)\right)\right].
\end{align*}

We now  show that $F$ satisfies  the assumptions of the Theorem from \cite{Qu2020Finite} and use the result to prove our own claim. 

In the following we let $\mathcal{F}_t$ be the $\sigma$--algebra generated by the random variables $(s_0,a_0,\cdots, s_t,a_t)$. 

First we restate the following identity from \cite{Nachum2017Bridging}
\begin{align*}
    T_{\lambda}^*(Q)(s,a)&=r(s,a) + \gamma \EE_{s'\sim P(\cdot|s,a)} \left[\lambda \log \left(\sum_{a' \in \mathcal{A}} \exp \left(\frac{Q(s,a')}{\lambda}\right)\right)\right] \\
    &= r(s,a) + \gamma \EE_{s'\sim P(\cdot|s,a)} \left[\max_{\pi} \langle Q(\cdot,s'),\pi \rangle +\lambda H(\pi;s') \right].
\end{align*}
We use it to show that $T_{\lambda}^*$ is a contraction. Indeed we have:
\begin{align*}
    &\norm{T_{\lambda}^*(Q_1)-T_{\lambda}^*(Q_2)}_{\infty} \\
    =&\Bigg|\Bigg|r(s,a) + \gamma \max_{\pi}\sum_{s'}P(s';s,a)\left(\langle Q_1(\cdot,s'),\pi \rangle +\lambda H(\pi;s')\right) \\
    &-r(s,a) - \gamma \max_{\pi}\sum_{s'}P(s';s,a)\left(\langle Q_2(\cdot,s'),\pi \rangle +\lambda H(\pi;s')\right)
    \Bigg|\Bigg|_{\infty}\\
    \leq& \gamma \norm{ \max_{\pi}\sum_{s'}P(s';s,a)\left(\langle Q_1(\cdot,s'),\pi \rangle -\langle Q_2(\cdot,s'),\pi \rangle \right)
    }_{\infty}\\
    \leq& \gamma \norm{Q_1-Q_2}_{\infty}.
\end{align*}

Moreover, it holds that \[F(Q)\leq \overline{R}+\gamma \norm{Q}_\infty +\lambda \log{|\mathcal{A}|}.\]
So we can set $C=\overline{R}+\lambda \log{|\mathcal{A}|}$

Next we note that $w_t$ is $F_{t+1}-$measurable (it depends on $s_{t+1}$) and that \[
\EE[w_t|\mathcal{F}_t]=0.
\]
Moreover $w(t)$ is bounded by $\overline{w}=\frac{2\gamma(\overline{R}+\lambda \log |\mathcal{A}|)}{1-\gamma}$.

Further we have assumed that the behavioural policy $\pi_B$ is sufficiently exploratory. Let $\tilde{\mu}$ be the corresponding stationary distribution, $\mu_{\min}=\inf_{s,a}\tilde{\mu}(s,a)$ and $t_{mix}$ the mixing time. Then \cite{Qu2020Finite} show that for $\sigma=\frac{1}{2}\mu_{\min}$ and $\tau=\lceil \log_2 (\frac{2}{\mu_{\min}})\rceil t_{mix}$ it holds that
\begin{equation}
    \forall s\in \cS, a\in \mathcal{A},\forall t\geq \tau : \mathbb{P}(s_t,a_t=s,a|\mathcal{F}_{t-\tau})\geq \sigma.
\end{equation}

Moreover, we note that $Q(t)$ and $Q^*_{\lambda}$ are bound by $\overline{x}= \frac{\overline{R}+\lambda \log |\mathcal{A}|}{1-\gamma}$.

Using the Theorem from \cite{Qu2020Finite} we thus have  the following result:

Let $\alpha_t=\frac{h}{t+t_0}$ with $t_0 \geq$ $\max \left(4 h,\left\lceil\log _2 \frac{2}{\mu_{\min }}\right\rceil t_{\mathrm{mix}}\right)$ and $h \geq \frac{4}{\mu_{\min }(1-\gamma)}$. Then, with probability at least $1-p$,
\begin{align*}
&\left\|Q(T)- Q^*_{\lambda}\right\|_{\infty} \leq \\ &\leq \frac{60 (\bar{R}+\lambda\log|\mathcal{A}|)}{(1-\gamma)^2} \sqrt{\frac{2\left(\left\lceil\log _2 \frac{2}{\mu_{\min }}\right\rceil t_{\mathrm{mix}}+1\right) h}{\mu_{\min }}} \sqrt{\frac{\log \left(\frac{2\left(\left\lceil\log _2 \frac{2}{\mu_{\min }}\right\rceil t_{\mathrm{mix}}+1\right) T^2|\mathcal{S} \| \mathcal{A}|}{p}\right)}{T+t_0}} \\
& +\frac{4 (\bar{R}+\lambda\log|\mathcal{A}|)}{(1-\gamma)^2} \max \left(\frac{160 h\left\lceil\log _2 \frac{2}{\mu_{\min }}\right\rceil t_{\mathrm{mix}}}{\mu_{\min }}, 2\left(\left\lceil\log _2 \frac{2}{\mu_{\min }}\right\rceil t_{\mathrm{mix}}+t_0\right)\right) \frac{1}{T+t_0}.\\
\end{align*}
We denote the bound above by {\color{red}\textbf{(A)}}. Let us choose $p=\delta^2$.\footnote{Note that $\delta < 1$, as it represents the distance between two softmax  policies under the supremum norm.} With probability $p$, $\left\|Q(T)-Q^*_{\lambda}\right\|_{\infty}$ is not bounded by {\color{red}\textbf{(A)}}. However, it is always upper bounded by $\frac{2(\Bar{R}+\lambda\log(|\mathcal{A}|))}{1-\gamma}$.

Setting $T=\cO (\frac{\log(1/\delta)}{\delta^2})$ , we get
\begin{align*}
     {\color{red}\textbf{(A)}}=& \cO\left(\sqrt{\frac{\log\left(\frac{T^2}{\delta^2}\right)}{T}}+\frac{1}{T}\right)\\
    =& \cO\left(\sqrt{\frac{\log\left(\frac{T}{\delta}\right)}{T}}\right)\\
\underset{T=\frac{\log\left(1/\delta\right)}{\delta^2}}{=}&\cO\left(\sqrt{\frac{\log\left(\frac{\frac{\log\left(1/\delta\right)}{\delta^2}}{\delta}\right)}{\frac{\log\left(1/\delta\right)}{\delta^2}}}\right)   \\
{=}&\cO\left(\delta\sqrt{\frac{\log\left(\frac{\log\left(1/\delta\right)}{\delta^3}\right)}{\log\left(1/\delta\right)}}\right)   \\
{=}&\cO\left(\delta\sqrt{\frac{-3\log \left(\delta\right) +\log\left(\log\left(1/\delta\right)\right)}{\log\left(1/\delta\right)}}\right)   \\
{=}&\cO\left(\delta\sqrt{\frac{-3\log \left(\delta\right)}{-\log\left(\delta\right)} +\frac{\log\left(\log\left(1/\delta\right)\right)}{\log\left(1/\delta\right)}}\right) .  \\
\end{align*}
Since $\delta<1$, $\log(1/\delta)>0$ and for $x>0$ it holds that $\frac{\log(x)}{x}<1/e$, such that we get:
\begin{align*}
     {\color{red}\textbf{(A)}}{\leq}&\cO\left(\delta\sqrt{3+\frac{1}{e}}\right)   \\
    =& \cO\left(\delta \right).
\end{align*}
 Using \cite{Mei2020Global}[Lemma 24], we arrive at:
\begin{align*}
        &\EE_o \left[\norm{\pi^o-\pi^*}^2_\infty  \right] \\
        \leq& 4 \EE_o \left[ \norm{Q^T_\lambda - Q^*_{\lambda}}^2_\infty\right]\\
        \leq& 4\left((1-p) {\color{red}\textbf{(A)}}^2 + p \left(\frac{2(\Bar{R}+\lambda\log(|\mathcal{A}|))}{1-\gamma}\right)^2\right)\\
        =& \cO (\delta^2).
\end{align*}
\end{proof}

A popular class of RL algorithms are policy gradient methods such as REINFORCE \cite{Williams1992Simple}. For the entropy-regularised problem, it generally makes sense to choose a softmax parametrization for the policy, as we know $\pi^*_{x,\xi}(s;a)\propto \exp(Q^*_{\lambda,x,\xi}(s,a)/\lambda)$~\cite{Mei2020Global}. We defer the details to \Cref{alg:pg}\ in \Cref{app:algorithms} and present the following convergence result, which shows using vanilla policy gradient for the lower level also fulfills \Cref{assumption:algorithm}---at least asymptotically.
\begin{proposition}
\label{prop:pggconv}
    Vanilla policy gradient with softmax parameterization converges, such that $\forall \delta, \exists T, \forall t\geq T: \norm{\pi^*_{x,\xi}-\pi^o_{t,x,\xi}} \leq \delta^2$, where $\pi^o_{t,x,\xi}$ is the computed policy after $t$ iterations.
\end{proposition}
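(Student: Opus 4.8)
The plan is to recognize \Cref{alg:pg} as a stochastic gradient ascent scheme on the smooth map $\theta \mapsto J_{\lambda,x,\xi}(\pi_\theta)$ and to combine a non-uniform gradient-domination property of the entropy-regularized softmax objective with a standard stochastic-approximation argument. I would fix $x,\xi$ throughout and drop them from the notation. The first step is a reduction: since $\pi^*_{x,\xi}$ is the \emph{unique} maximizer of $J_\lambda$, and the regularized value gap $J^*_\lambda - J_\lambda(\pi)$ controls a quadratic form in the policy distance (the same type of estimate relating value gaps to policy differences that was derived for $\|V_\lambda^{\pi^*_{x,\xi}} - V_\lambda^{\pi^o_{x,\xi}}\|_\infty$ inside the proof of \Cref{thm:convergence}; see also \cite{Mei2020Global}), it suffices to show that $J_\lambda(\pi_{\theta_t}) \to J^*_\lambda$ along the iterates of \Cref{alg:pg}, from which the stated $\varepsilon$--$T$ convergence of $\pi^o_{t,x,\xi}$ to $\pi^*_{x,\xi}$ follows.

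Second, I would check that \Cref{alg:pg} really is stochastic gradient ascent on $\theta \mapsto J_\lambda(\pi_\theta)$. The estimator $\widehat{\nabla J}_s(\theta_l)$ is unbiased for $\nabla_\theta J_\lambda(\pi_{\theta_l})$ by exactly the geometric-truncation argument used in \Cref{prop:unbiasedgradient}: truncating the outer rollout at an independent $\mathrm{Geo}(1-\gamma)$ time reproduces the discounted sum in expectation, and the additional $\mathrm{Geo}(1-\gamma^2)$ rollout gives an unbiased estimate of the return appearing in the policy-gradient identity. Its second moment is finite because $|r_{x,\xi}|<\overline{R}$, the per-step entropy is bounded by $\log|\cA|$, the softmax score $\nabla_\theta \log \pi_\theta(a;s)$ is bounded, and geometric random variables have finite second moment. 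Together with $\beta$-smoothness of $\theta \mapsto J_\lambda(\pi_\theta)$ for a constant $\beta$ depending only on $\gamma,\lambda,\overline{R},|\cA|$ \cite{Mei2020Global}, the iterates $\{\theta_l\}$ with step sizes $\{\alpha_l\}$ obeying $\sum_l \alpha_l = \infty$ and $\sum_l \alpha_l^2 < \infty$ form a standard SGD sequence on a smooth objective.

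Third, I would invoke two facts about entropy-regularized softmax policy gradient from \cite{Mei2020Global}: a non-uniform gradient-domination inequality of the form $\|\nabla_\theta J_\lambda(\pi_\theta)\|^2 \ge c\,(\min_{s,a}\pi_\theta(a;s))^2\,(J^*_\lambda - J_\lambda(\pi_\theta))$ for some $c>0$, and the fact that, because of entropy regularization, the policies along the trajectory stay uniformly bounded away from the boundary of the simplex, i.e. $\inf_l \min_{s,a}\pi_{\theta_l}(a;s) > 0$. Granting these, the gradient-domination coefficient is uniformly positive along the path, and a supermartingale-convergence argument applied to $J^*_\lambda - J_\lambda(\pi_{\theta_l})$ --- using the smoothness, the bounded-variance unbiased estimator, the gradient-domination bound, and $\sum_l\alpha_l=\infty$, $\sum_l\alpha_l^2<\infty$ --- gives $J_\lambda(\pi_{\theta_l}) \to J^*_\lambda$ almost surely, and hence $\pi^o_{t,x,\xi} \to \pi^*_{x,\xi}$. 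I expect the main obstacle to be the boundary-avoidance statement in the \emph{stochastic} regime: with exact gradients it follows from a Lyapunov argument on the entropy, but with noisy updates one must rule out that a run of adverse steps pushes some action probability toward zero faster than the restoring drift of the entropy term recovers it, which is precisely where square-summability of $\{\alpha_l\}$ enters; a secondary subtlety, already present in the exact-gradient analysis, is that $\theta_l$ itself need not converge, so every statement must be formulated in terms of the induced policy $\pi_{\theta_l}$ rather than $\theta_l$.
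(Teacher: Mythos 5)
Your route is genuinely different from the paper's, and it has an unresolved gap precisely where you suspected one. The paper does not analyze the stochastic update at all: it treats vanilla policy gradient as \emph{exact} gradient ascent on the $s$-smooth map $\theta\mapsto V^{\pi_\theta}_\lambda(\mu)$ with step size $1/s$, so that the values increase monotonically, are bounded by $(\overline{R}+\lambda\log|\cA|)/(1-\gamma)$, and hence converge; it then extracts a convergent subsequence of $\{\pi_t\}$ from the compact set $\Delta(\cA)^{|\cS|}$, identifies its limit with the unique entropy-regularized maximizer, and upgrades subsequential convergence to convergence of the whole policy sequence by contradiction, using that uniqueness (if $\pi_t\not\to\pi^*$, the values would stay a fixed $\kappa>0$ below $V^*_\lambda$ infinitely often). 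No gradient-domination inequality, no Robbins--Monro step-size schedule, and no martingale argument appear, which is consistent with the deterministic phrasing of the proposition ($\forall\delta\,\exists T\dots$ with no expectation).

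Your stochastic-approximation argument, by contrast, hinges on the claim that $\inf_l\min_{s,a}\pi_{\theta_l}(a;s)>0$ almost surely along the \emph{noisy} iterates, which you need in order to keep the coefficient in the non-uniform gradient-domination inequality of \cite{Mei2020Global} bounded away from zero. You correctly flag this as the main obstacle, but you do not close it, and it is not a routine technicality: the boundary-avoidance results in \cite{Mei2020Global} are proved for exact gradients via monotone improvement, and no such monotonicity is available for stochastic iterates, so the supermartingale step cannot be completed as written. Square-summability of the step sizes by itself does not rule out that a finite run of adverse noise drives some $\pi_{\theta_l}(a;s)$ arbitrarily close to zero before the entropic drift restores it, and once the domination coefficient degenerates the descent recursion no longer yields $J_\lambda(\pi_{\theta_l})\to J^*_\lambda$. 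To obtain a complete proof, either adopt the paper's exact-gradient reading of the proposition --- in which case your gradient-domination machinery is unnecessary and the compactness-plus-uniqueness argument suffices --- or supply a genuine almost-sure lower bound on the iterate policies, e.g.\ by projecting the parameters onto a set on which the softmax probabilities are uniformly bounded below, which would require modifying \Cref{alg:pg}.
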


\begin{proof}
As in most proofs we drop the subscripts for $x,\xi$. The proof is an adaptation of the one presented in \cite{Mei2020Global}[Lemma 16].
We denote by $\pi_t$ the iterates of the policies of the algorithm and by $V^{\pi_t}_{\lambda}(\mu)$ the corresponding value function with starting distribution $\mu$.
 It can be shown that $V^\pi_{\lambda}$ is $s$-smooth for some $s$ \cite{Mei2020Global}. Choosing a stepsize of $1/s$, we have that the value functions increase monotonically, i.e.
 \[ 
\forall t:  V^{\pi_{t+1}}_{\lambda}(\mu)\geq  V^{\pi{_t}}_{\lambda}(\mu).
 \]
 At the same time, it holds that: 
 \[
  V^{\pi{_t}}_{\lambda}(\mu)\leq \frac{\overline{R}+\lambda \log \cA}{1-\gamma}.
 \]
 By monotone convergence it thus follows that $V^{\pi{_t}}_{\lambda}(\mu) \rightarrow V^*_{\lambda}(\mu)$, where $V^*_{\lambda}(\mu)$ is the maximum possible value.

 Since $\pi_t \in \Delta(\cA)^{|\cS|}$ and $\Delta(\cA)^{|\cS|}$ is compact it follows that $\{\pi_t\}_t$ has a convergent subsequence  $\{\pi_{t_k}\}_k$. Denote by $\pi^*$ the limit of this subsequence. It has to hold that $V^{\pi^*}_\lambda(\mu)=V^*_{\lambda}(\mu)$ and thus $\pi^*$ is the optimal policy.

 Now assume that $\{\pi_t\}_t$ does not converge to $\pi^*$. In that case \[
 \exists \epsilon,\forall t, \exists t' \geq t: \norm{\pi^*-\pi_{t'}}_\infty > \epsilon.
 \]
 Note that due to entropy regularization $V^*_{\lambda}(\mu)$ is the unique maximum. This means that 
 \[
 \exists \kappa: \max \{V^\pi_{\lambda}|\norm{\pi-\pi^*}_\infty\geq\epsilon\} +\kappa < V^*_\lambda.
 \]
 It follows then that 
 \[
\forall t, \exists t' \geq t: \norm{V^{\pi^*}_\lambda-V^{\pi_{t'}}_\lambda}_\infty > \kappa,
 \]
 which implies $ V^{\pi{_t}}_{\lambda}(\mu)$ does not converge to $V^*$---a contradiction to our conclusion above. It therefore has to hold that $\pi_{t} \rightarrow \pi^{*}_{}$.
\end{proof}

The asymptotic guarantee of Vanilla Policy Gradient can be improved to non-asymptotic by using Natural Policy Gradient, as introduced by \cite{Kakade2001natural}. We restate the following result from \cite{Cen2022Fast}.

\begin{proposition}[Linear convergence of exact entropy-regularized NPG, \cite{Cen2022Fast}]
\label{prop:npgconv}
For any learning rate $0<\eta \leq(1-\gamma) / \tau$, the entropy-regularized NPG updates (18) satisfy
$$
\begin{aligned}
\left\|Q_\lambda^{\star}-Q_\lambda^{(t+1)}\right\|_{\infty} & \leq C_1 \gamma(1-\eta \lambda)^t \\
\left\|\log \pi_\lambda^{\star}-\log \pi^{(t+1)}\right\|_{\infty} & \leq 2 C_1 \lambda^{-1}(1-\eta \lambda)^t,
\end{aligned}
$$
for all $t \geq 0$, where
$$
C_1:=\left\|Q_\lambda^{\star}-Q_\lambda^{(0)}\right\|_{\infty}+2 \lambda\left(1-\frac{\eta \lambda}{1-\gamma}\right)\left\|\log \pi_\lambda^{\star}-\log \pi^{(0)}\right\|_{\infty} .
$$
\end{proposition}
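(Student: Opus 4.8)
The statement is, essentially verbatim, the main convergence theorem of \cite{Cen2022Fast} for exact entropy-regularized natural policy gradient, so the most direct plan is to verify that the problem we apply it to is exactly their setting and then invoke their result. For any fixed leader decision $x$ and context $\xi$, the lower-level objective $J_{\lambda,x,\xi}$ of \eqref{eq:mdp_problem} is a standard entropy-regularized MDP with reward $r_{x,\xi}$, transition kernel $P_{x,\xi}$, discount $\gamma$, and a single regularization parameter; the temperature denoted $\tau$ in \cite{Cen2022Fast} is played here by $\lambda$, which also explains why the learning-rate condition reads $0<\eta\le(1-\gamma)/\tau$ (to be read with $\tau\equiv\lambda$). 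First I would check that update~(18) coincides with their exponentiated NPG step; once this is confirmed, the bounds on $\|Q_\lambda^\star-Q_\lambda^{(t+1)}\|_\infty$ and $\|\log\pi_\lambda^\star-\log\pi^{(t+1)}\|_\infty$, together with the constant $C_1$, transfer unchanged.

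For a self-contained derivation the plan is to reproduce the argument of \cite{Cen2022Fast}. First I would write the entropy-regularized NPG update in closed form: with softmax parameterization the step is an exponentiated-gradient / mirror-descent update
\[
\pi^{(t+1)}(a;s)\;\propto\;\bigl(\pi^{(t)}(a;s)\bigr)^{1-\frac{\eta\lambda}{1-\gamma}}\exp\!\Bigl(\tfrac{\eta}{1-\gamma}\,Q_{\lambda}^{(t)}(s,a)\Bigr),
\]
so the new policy is a geometric interpolation between the current policy and the Boltzmann policy of $Q_\lambda^{(t)}$. Next I would track the two coupled error quantities $\|Q_\lambda^\star-Q_\lambda^{(t)}\|_\infty$ and $\|\lambda\log\pi_\lambda^\star-\lambda\log\pi^{(t)}\|_\infty$, using the soft Bellman fixed-point characterization $\pi_\lambda^\star(a;s)\propto\exp(Q_\lambda^\star(s,a)/\lambda)$ and the $\gamma$-contraction of the soft Bellman operator $\mathcal{T}_\lambda^*$ of \eqref{eq:bellmanstateaction}. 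The crux is a one-step inequality showing that an appropriate potential combining these two errors contracts by the factor $(1-\eta\lambda)$ per iteration.

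Given that one-step contraction, the result follows by unrolling: iterating the recursion yields the geometric factor $(1-\eta\lambda)^t$, and carrying the initial errors through the recursion produces exactly $C_1=\|Q_\lambda^\star-Q_\lambda^{(0)}\|_\infty+2\lambda(1-\tfrac{\eta\lambda}{1-\gamma})\|\log\pi_\lambda^\star-\log\pi^{(0)}\|_\infty$, with the prefactors $\gamma$ and $2\lambda^{-1}$ arising from the Bellman backup and from converting the $Q$-gap into a log-policy gap. The hard part will be establishing this coupled one-step contraction: because the NPG step mixes the previous policy with the exponentiated $Q$-function, neither the value-function error nor the log-policy error contracts on its own, and one must identify the correct Lyapunov combination, together with the monotone-improvement property of regularized NPG, to obtain the clean $(1-\eta\lambda)$ rate. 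Finally, to connect \Cref{prop:npgconv} to \Cref{assumption:algorithm} I would note that geometric decay of $\|\log\pi_\lambda^\star-\log\pi^{(t+1)}\|_\infty$ implies geometric decay of $\|\pi_\lambda^\star-\pi^{(t+1)}\|_\infty$, since the softmax map is Lipschitz on the range of iterates, which are uniformly bounded away from $0$ as in \eqref{eq:minmax_softmaxpolicy}; hence $\mathcal{O}(\log(1/\delta))$ iterations suffice to reach accuracy $\delta$.
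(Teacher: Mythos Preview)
Your proposal is correct and matches the paper's approach exactly: the paper simply restates this result from \cite{Cen2022Fast} without proof, so invoking their theorem is precisely what is required. Your additional sketch of a self-contained derivation and the remark connecting the log-policy bound to \Cref{assumption:algorithm} go beyond what the paper provides.
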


\section{Implementation Details}
\label{appendix:implementation_details}

\subsection{Baseline Algorithms}

\subsubsection{Adaptive Model Design \texorpdfstring{\citep{chen2022adaptive}}{}}
\label{appendix:amd_algorithm}
As noted in \Cref{sec:numerical_experiments}, the Adaptive Model Design (AMD) algorithm \citep{chen2022adaptive} was proposed for the Regularized Markov Design (RMD) problem which is a special case of \prob. In particular, when $\mathbb{P}_\xi$ is a Dirichlet distribution \sprob\ reduces to the RMD problem. To account for this difference, we modify the AMD algorithm (Algorithm 2 in \citep{chen2022adaptive}) as described in \Cref{alg:modified_amd}. We denote the upper-level reward and value functions with the superscript $u$ in the algorithm.

\begin{algorithm}
\caption{(Modified) Adaptive Model Design}
\label{alg:modified_amd}
\begin{algorithmic}[H]
\STATE \textbf{Input:} Iterations $T$, Inner iterations: $K$, Learning rate $\alpha$, Regularization $\lambda$, gradient of the pre-learned model $\nabla_x \log P$, gradient of the reward function $\nabla_x r$
\STATE \textbf{Initialize} $x_0$, $Q_0$, $\nabla_x Q_0$, and  $\Tilde{Q}_0$
\FOR{$t = 0$ to $T-1$}
    \STATE $\xi\sim \mathbb{P}_\xi$
    \FOR{$k=0$ to $K-1$}
        \STATE $\pi_{x_t, \xi} \gets \exp (\lambda Q_k(s, \cdot))$
        \STATE Calculate $V_k, \nabla_{x_t} V_k, V^U_k, \nabla_{x_t} A_k, A_k^u, \Tilde{V}_k$
        \STATE $Q_{k+1} \gets \calT_{r, \gamma}(V_k)$
        \STATE $\nabla_{x_t} Q_{k+1} = \calT_{\nabla_{x_t}r, \gamma} (\nabla_{x_t} V_k + V_k \nabla_{x_t} \log P)$
        \STATE $Q_{k+1}^u = \calT_{r_u, \gamma_u}(V_k^u)$
        \STATE $\Tilde{Q}_{k+1} \gets \calT_{\nabla_{x_t}r^u + \lambda A^u_k \nabla_{x_t}A_k}(\Tilde{V}_k + V_k^u \nabla_{x_t} \log P)$
    \ENDFOR
    \STATE Set $\frac{\widehat{d F}}{dx} = \Tilde{V}_K$
    \STATE $x_{t+1}\gets x_t+\alpha \widehat{\frac{dF}{dx}}$
    \STATE Reinitialize $Q_0 \gets Q_K$, $\nabla_x Q_0 \gets \nabla_x Q_K$, and  $\Tilde{Q}_0 \gets \Tilde{Q}_K$
\ENDFOR
\STATE \textbf{Output:} Optimised parameter $x_T$
\end{algorithmic}
\end{algorithm}

\subsubsection{Zero-Order Algorithm}
\label{appendix:zero_order_algorithm}
\Cref{alg:zero_order} defines the zero-order gradient estimation algorithm described in \Cref{sec:numerical_experiments}. We parametrize the perturbation constant to decrease with the number of iterations such as $u_t = \frac{C}{t}$ where $C$ is a positive constant.
\begin{algorithm}
\caption{Zero-Order Algorithm}
\label{alg:zero_order}
\begin{algorithmic}[H]
\STATE \textbf{Input:} Iterations $T$, Learning rate $\alpha$, Regularization $\lambda$
\STATE \textbf{Initialize} $x_0$
\FOR{$t = 0$ to $T-1$}
    \STATE $\xi\sim \mathbb{P}_\xi$
    \STATE Sample $z \sim N(0, I_{d_x})$
    \STATE $\pi^o_{x_t,\xi}\gets \texttt{OraclePolicy}(x_t,\xi)$
    \STATE $\pi^o_{x_t+u_t*z,\xi}\gets \texttt{OraclePolicy}(x_t+z u_t,\xi)$
    \STATE Set $\frac{\widehat{d F}}{dx} = \frac{f(x+u_t*z, \pi^*_{x+u_t*z,\xi}, \xi) - f(x, \pi^*_{x,\xi}, \xi)}{u_t}z$

    \STATE $x_{t+1}\gets x_t+\alpha \widehat{\frac{dF}{dx}}$
\ENDFOR
\STATE \textbf{Output:} $\hat{x}_T \sim U(\{x_0,\dots,x_{T-1}\})$
\end{algorithmic}
\end{algorithm}

\subsection{Four Rooms}
\label{appendix:implementation_four_rooms}

\subsubsection{Implementation Details}
\label{appendix:four_rooms_implementation_details}
We parametrize the penalty function $\Tilde{r}$ as the softmax transformation of $x \in \RR^{d_s+1}$ where the $i$-th entry of $x$ corresponds to the $i$-th cell in the state space $\cS$ and the additional dimension $d_s+1$ is used to allocate the penalties not effective and also excluded from the penalty term received by the leader at the end of each episode. In particular,
\begin{equation*}
    \Tilde{r}(s, a) = -0.2 * \softmax(s; x),
\end{equation*}
where $\softmax(s; x)$ denotes the value of the softmax transformation of $x$ at the entry corresponding to the state $s$. Note that this parametrization explicitly restricts the maximum available budget for penalties to $-0.2$.

\subsubsection{Hyperparameters}
For the upper-level optimization problem, we use gradient norm clipping of $1.0$. The learning rate for each algorithm has been chosen as the best performing one from $[1.0, 0.5, 0.1, 0.05, 0.01]$ individually. Additionally, we tune the parameter $C$ for the Zero-order algorithm on the values $[0.1, 0.5, 1.0, 2.0, 5.0]$. For \algo, we sample $10,000$ environment steps for each gradient calculation.

\subsubsection{Additional Figures}
\label{appendix:four_rooms_additional_figures}
\begin{figure}[H]
  \centering
    \includegraphics[width=0.8\textwidth]{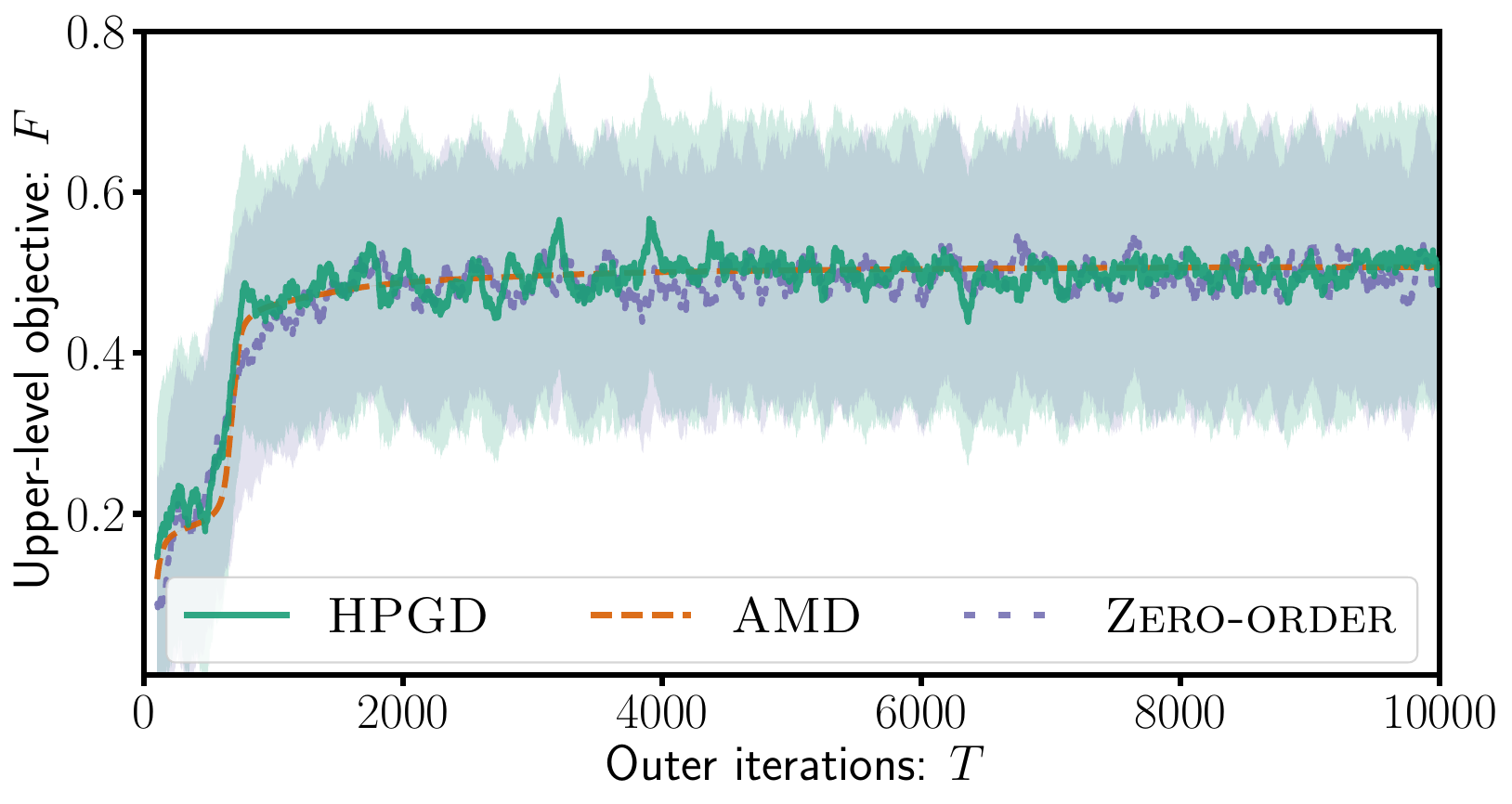}
    
  \caption{Upper-level objective values, $F$, over the number of outer iterations for hyperparameters $\lambda = 0.001$ and $\beta = 3.0$}
\end{figure}

\begin{figure}[H]
  \centering
    \includegraphics[width=0.8\textwidth]{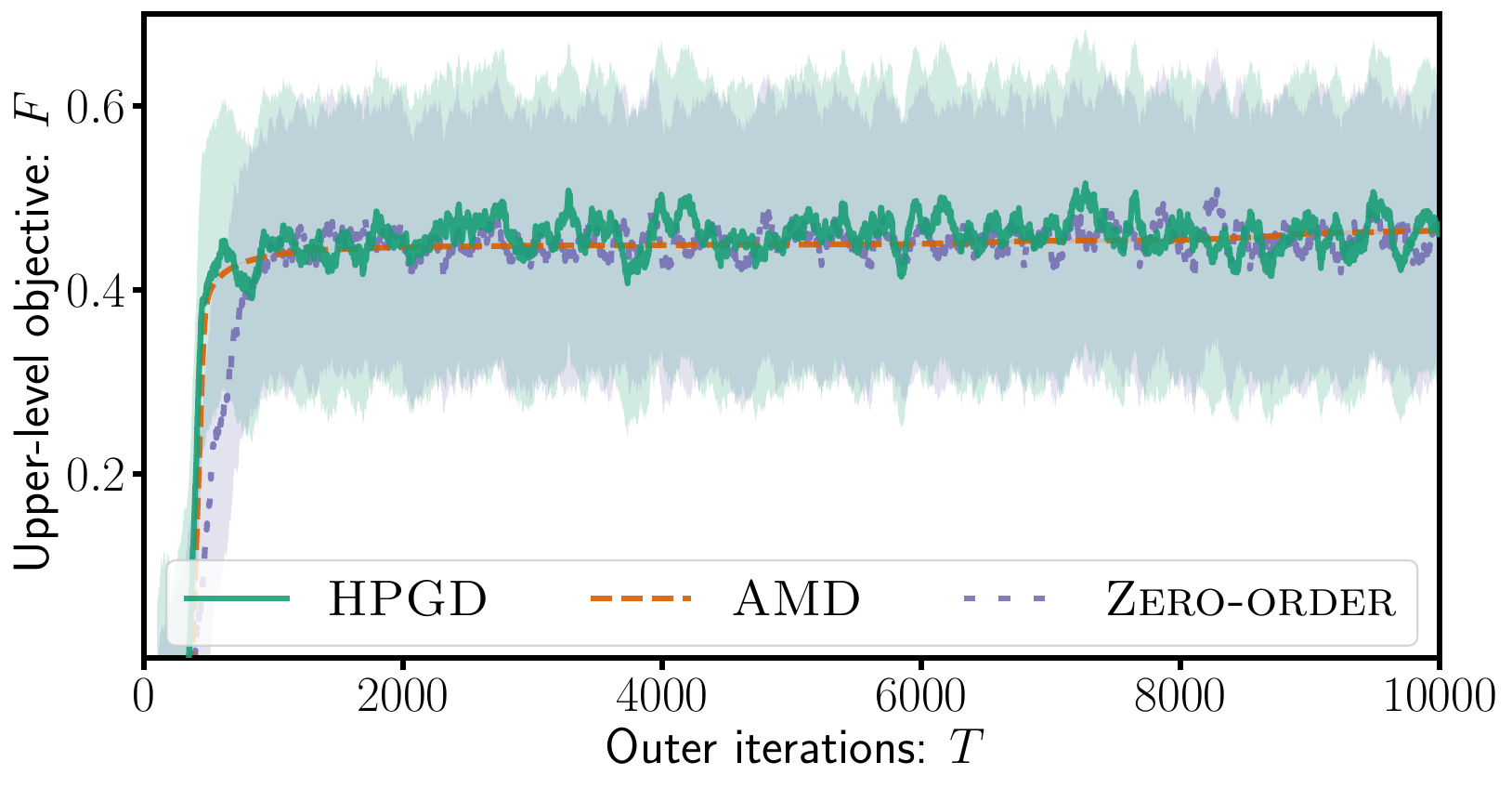}
    
  \caption{Upper-level objective values, $F$, over the number of outer iterations for hyperparameters $\lambda = 0.001$ and $\beta = 5.0$}
\end{figure}

\begin{figure}[H]
  \centering
    \includegraphics[width=0.8\textwidth]{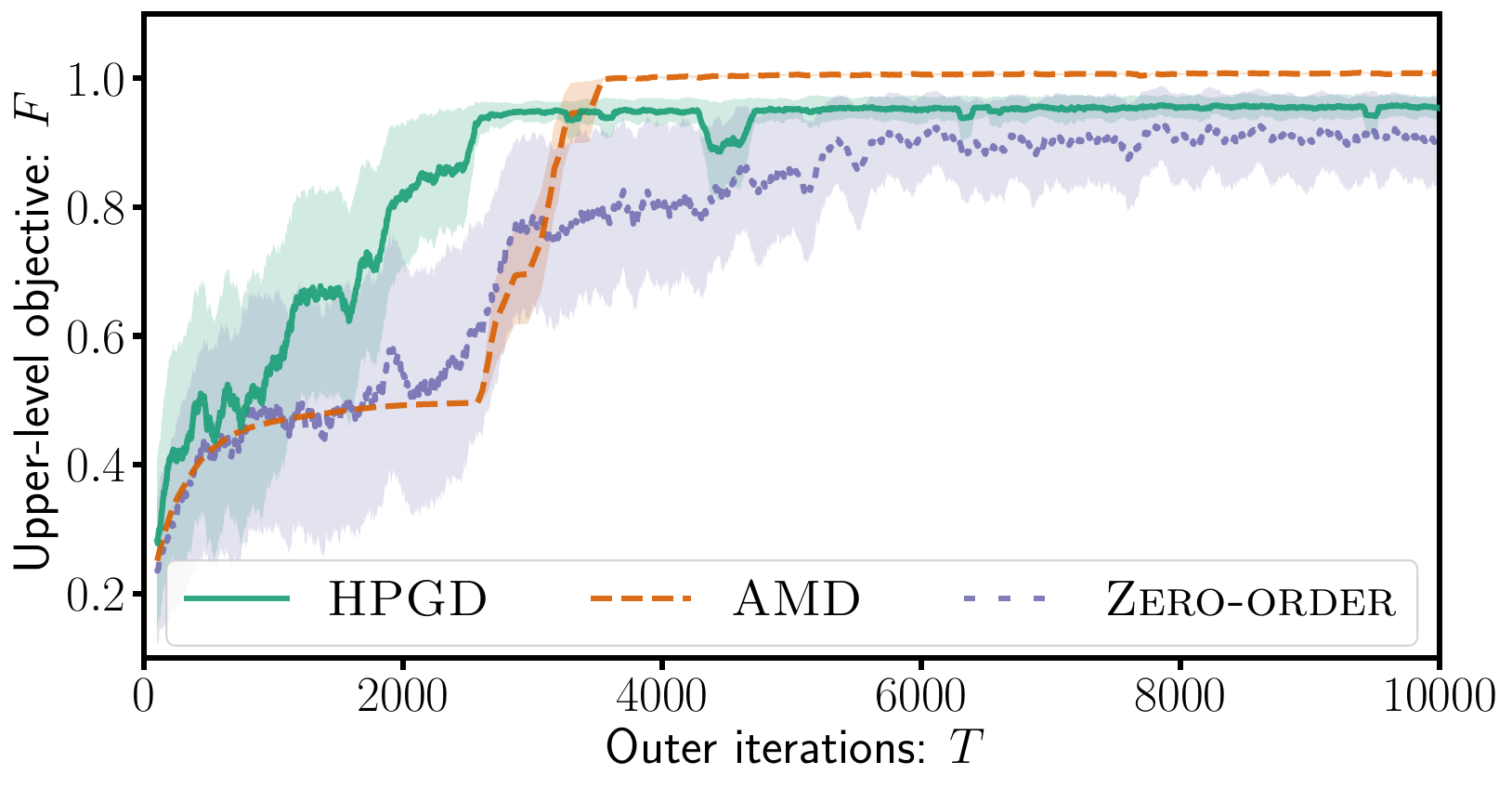}
    
  \caption{Upper-level objective values, $F$, over the number of outer iterations for hyperparameters $\lambda = 0.003$ and $\beta = 1.0$}
\end{figure}

\begin{figure}[H]
  \centering
    \includegraphics[width=0.8\textwidth]{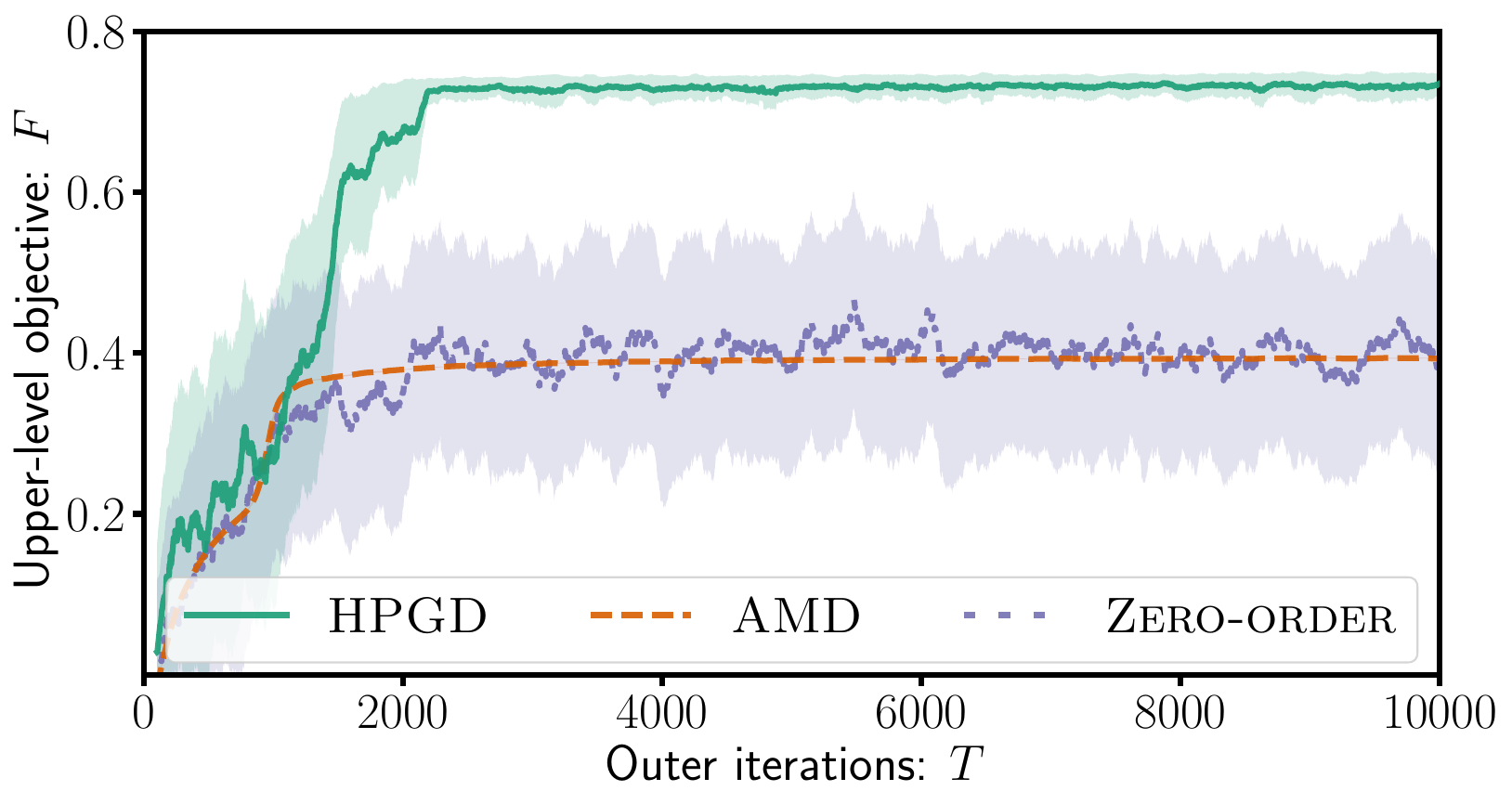}
    
  \caption{Upper-level objective values, $F$, over the number of outer iterations for hyperparameters $\lambda = 0.003$ and $\beta = 3.0$}
\end{figure}

\begin{figure}[H]
  \centering
    \includegraphics[width=0.8\textwidth]{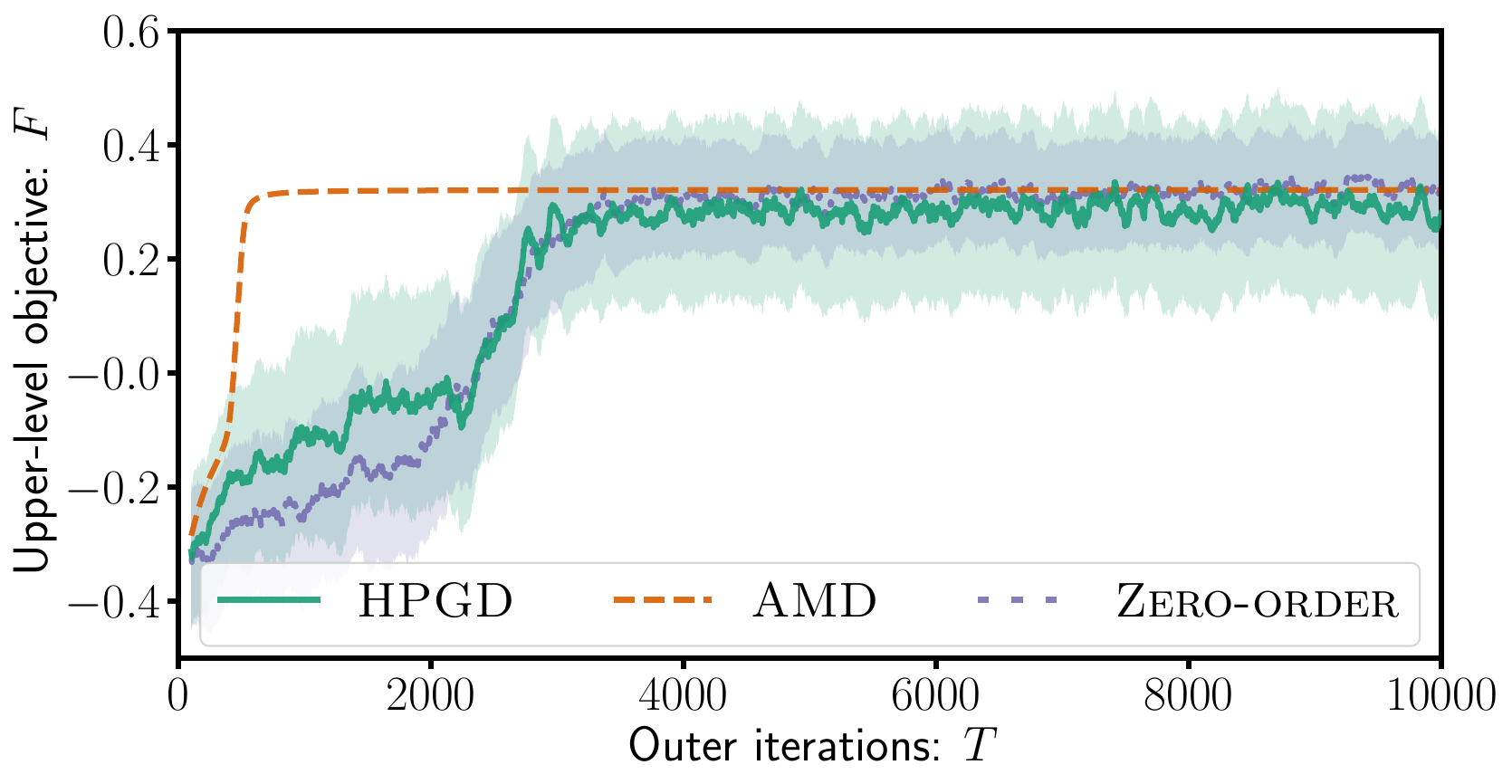}
    
  \caption{Upper-level objective values, $F$, over the number of outer iterations for hyperparameters $\lambda = 0.003$ and $\beta = 5.0$}
\end{figure}

\begin{figure}[H]
  \centering
    \includegraphics[width=0.8\textwidth]{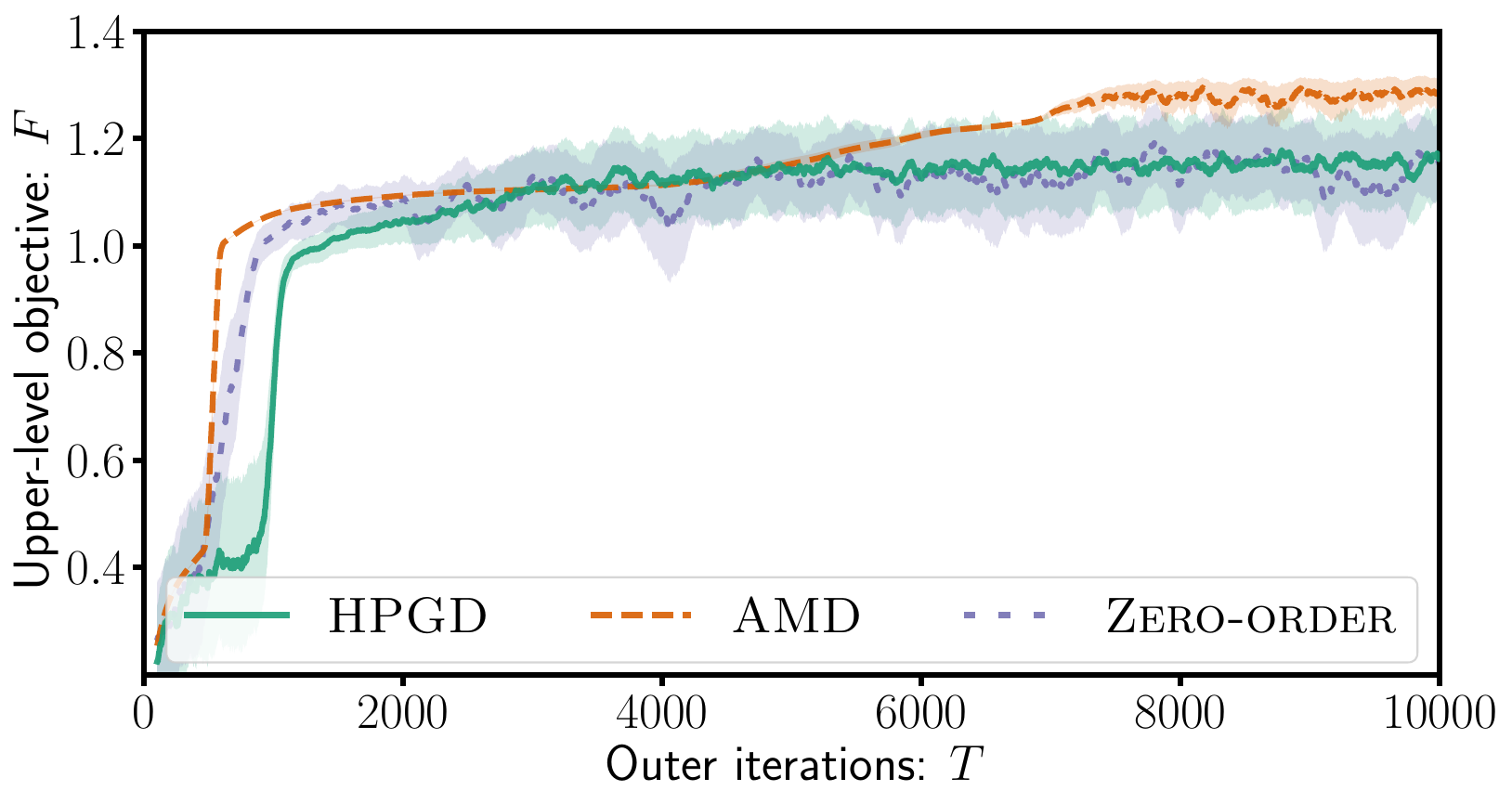}
    
  \caption{Upper-level objective values, $F$, over the number of outer iterations for hyperparameters $\lambda = 0.005$ and $\beta = 1.0$}
\end{figure}

\begin{figure}[H]
  \centering
    \includegraphics[width=0.8\textwidth]{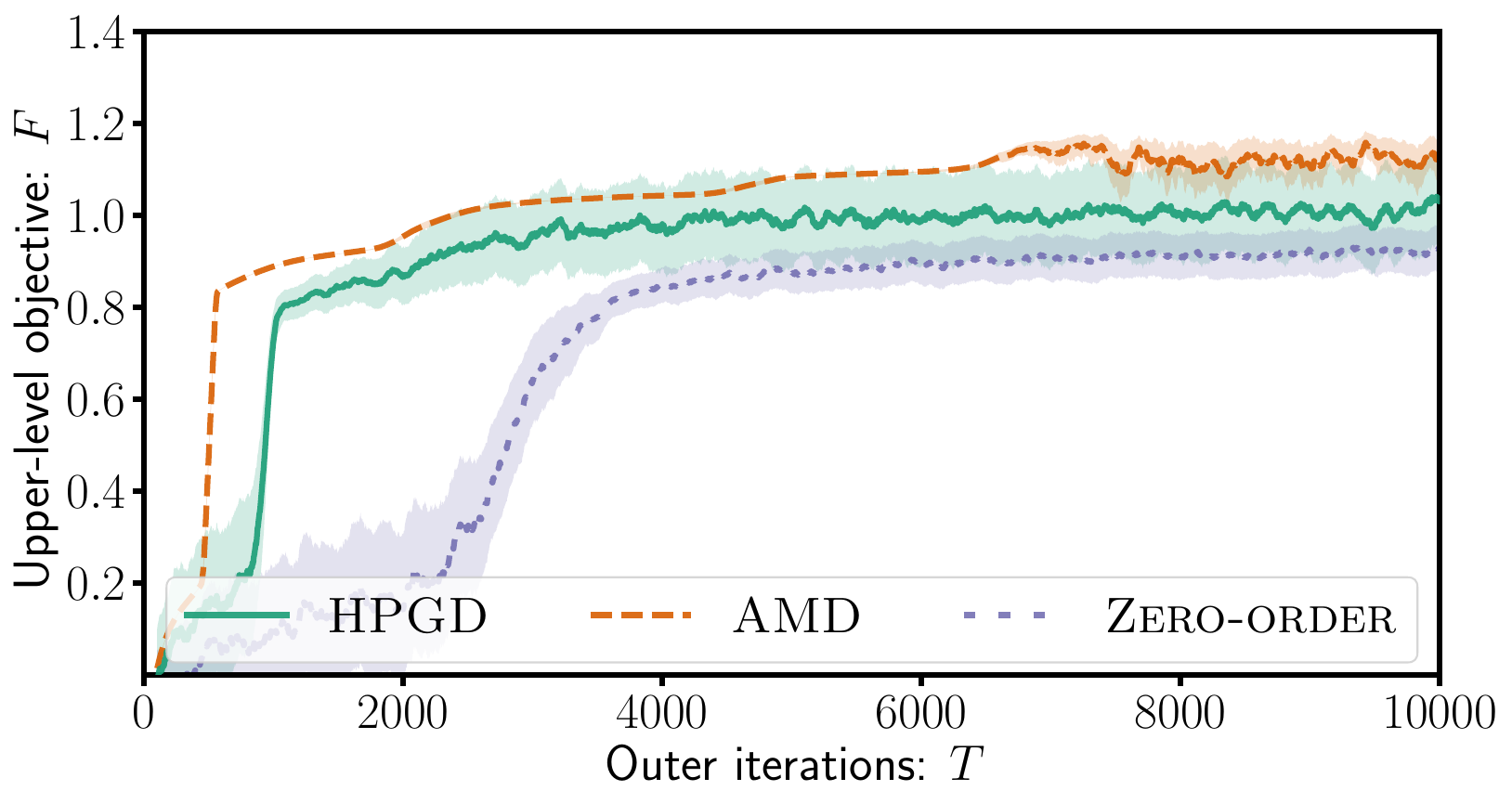}
    
  \caption{Upper-level objective values, $F$, over the number of outer iterations for hyperparameters $\lambda = 0.005$ and $\beta = 3.0$}
\end{figure}

\begin{figure}[H]
  \centering
    \includegraphics[width=0.8\textwidth]{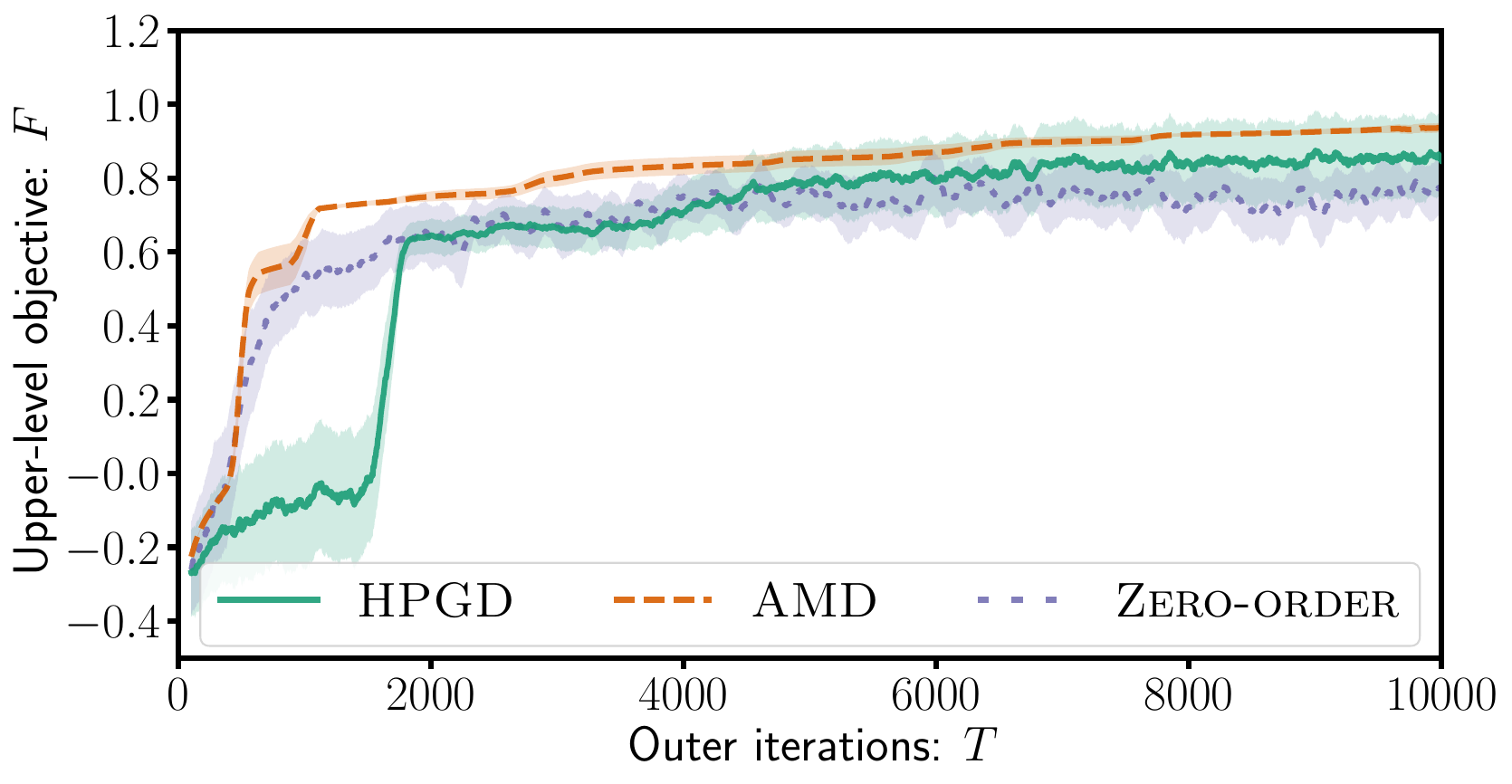}
    
  \caption{Upper-level objective values, $F$, over the number of outer iterations for hyperparameters $\lambda = 0.005$ and $\beta = 5.0$}
\end{figure}

\begin{figure}[H]
  \centering
    \includegraphics[width=0.9\textwidth]{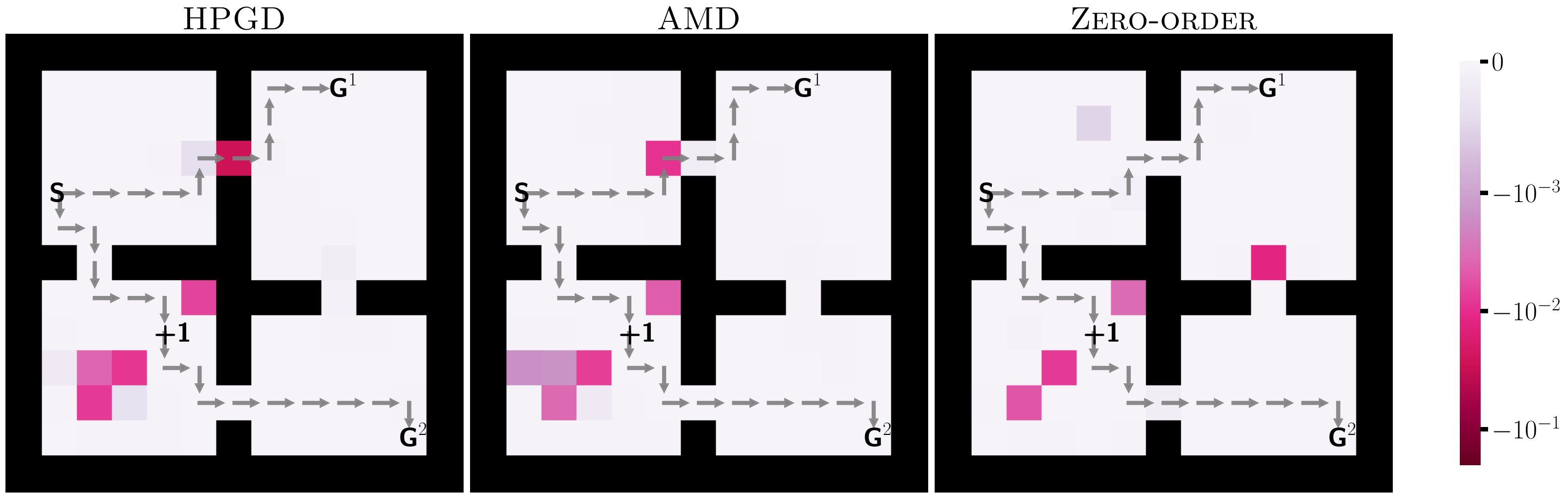}
    
  \caption{Reward penalties given to the lower-level agent in each state of the Four-Rooms problem optimized by the \salgo, AMD, and Zero-Order, respectively, for hyperparameters $\lambda = 0.001$ and $\beta = 3.0$}
\end{figure}

\begin{figure}[H]
  \centering
    \includegraphics[width=0.9\textwidth]{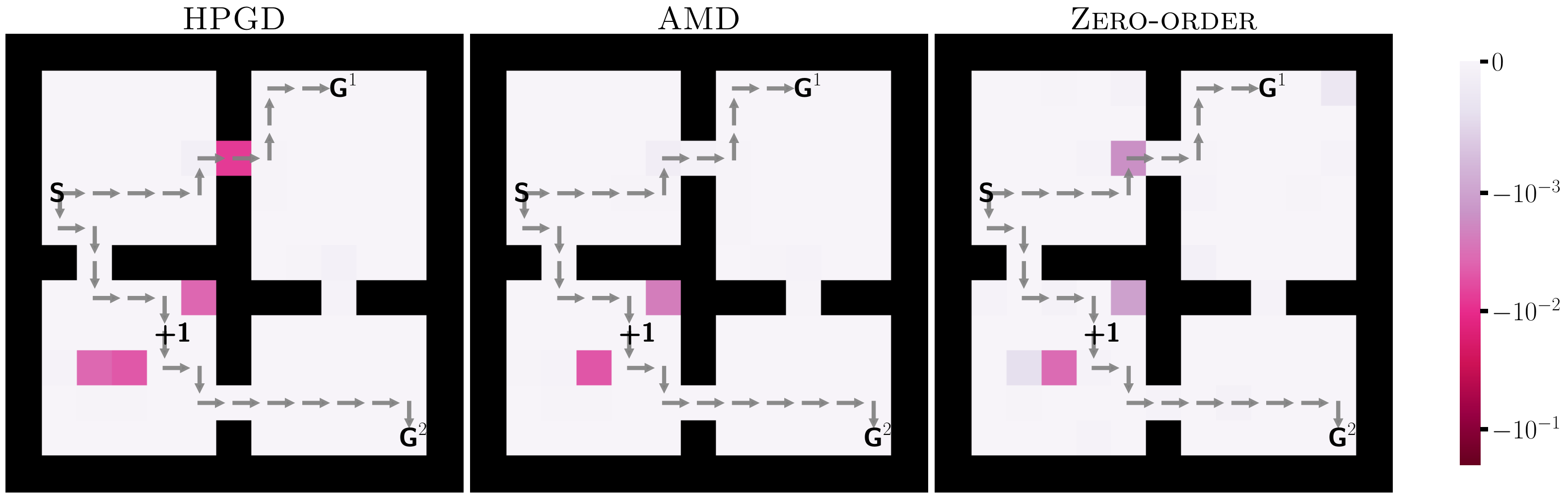}
    
  \caption{Reward penalties given to the lower-level agent in each state of the Four-Rooms problem optimized by the \salgo, AMD, and Zero-Order, respectively, for hyperparameters $\lambda = 0.001$ and $\beta = 5.0$}
\end{figure}

\begin{figure}[H]
  \centering
    \includegraphics[width=0.9\textwidth]{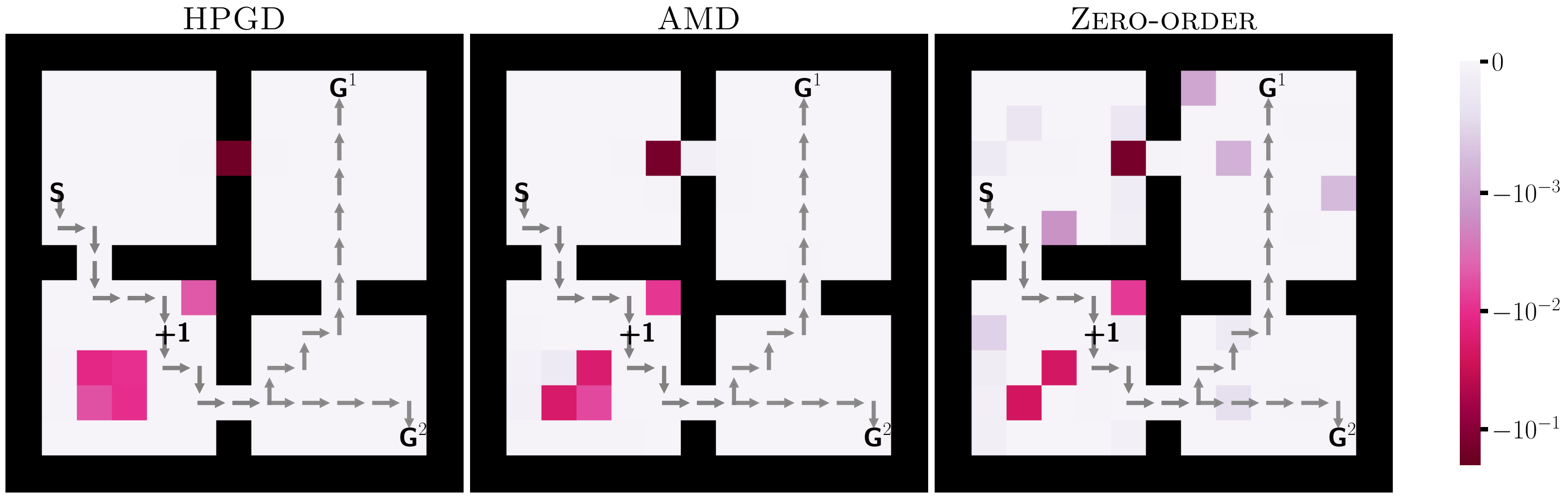}
    
  \caption{Reward penalties given to the lower-level agent in each state of the Four-Rooms problem optimized by the \salgo, AMD, and Zero-Order, respectively, for hyperparameters $\lambda = 0.003$ and $\beta = 1.0$}
\end{figure}

\begin{figure}[H]
  \centering
    \includegraphics[width=0.9\textwidth]{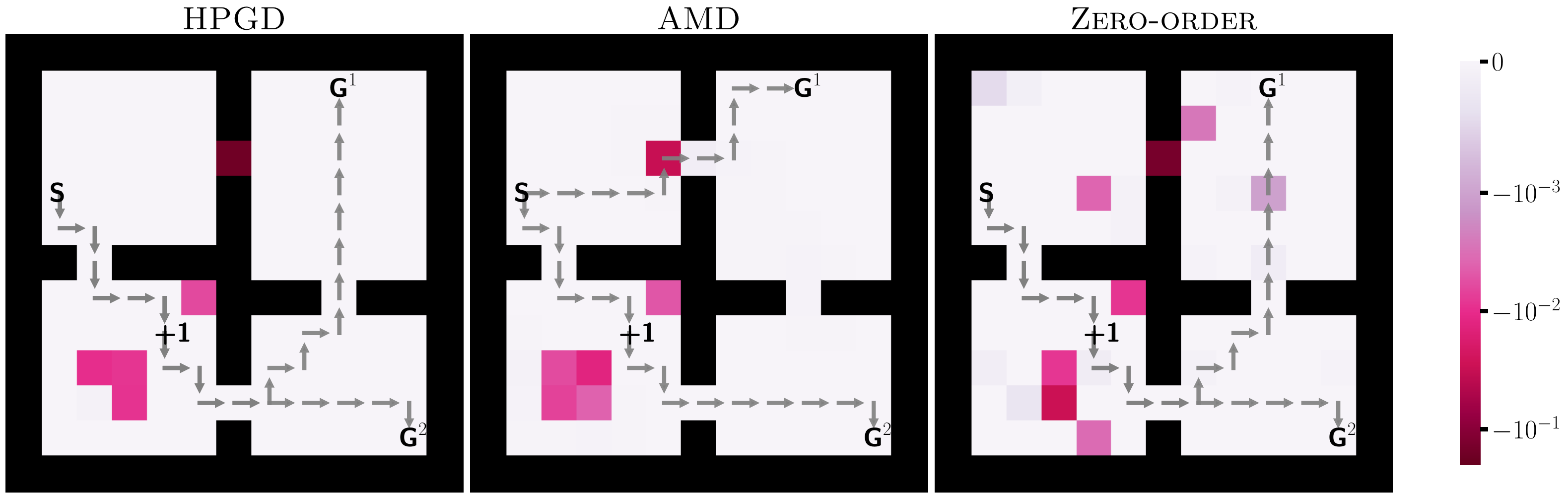}
    
  \caption{Reward penalties given to the lower-level agent in each state of the Four-Rooms problem optimized by the \salgo, AMD, and Zero-Order, respectively, for hyperparameters $\lambda = 0.003$ and $\beta = 3.0$}
\end{figure}

\begin{figure}[H]
  \centering
    \includegraphics[width=0.9\textwidth]{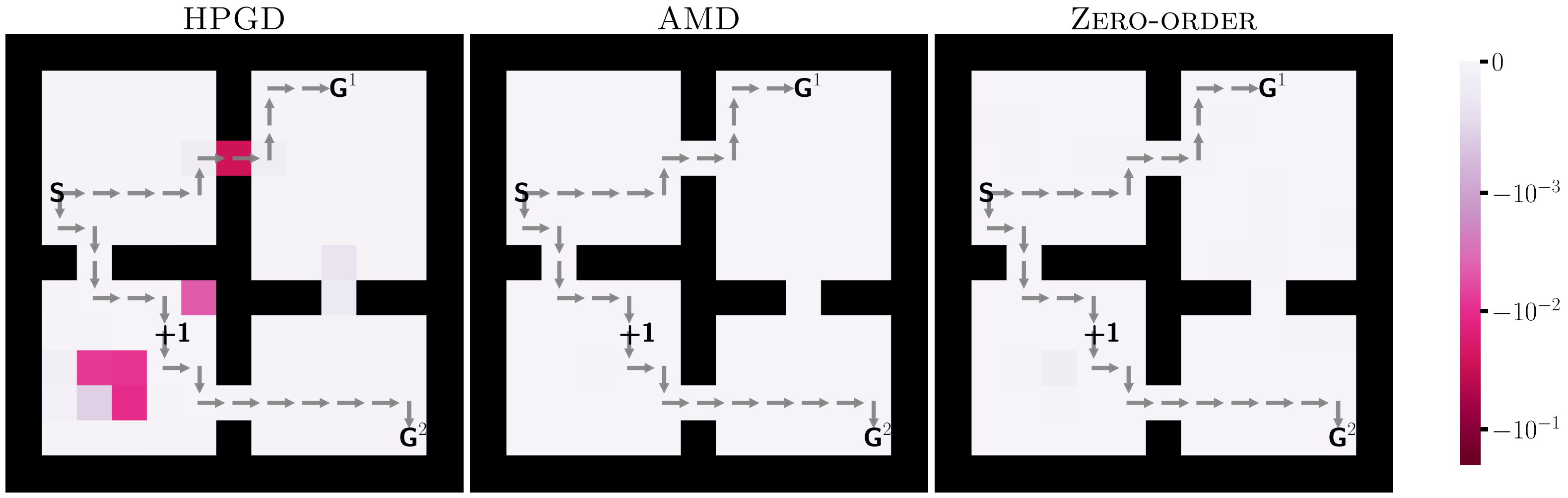}
    
  \caption{Reward penalties given to the lower-level agent in each state of the Four-Rooms problem optimized by the \salgo, AMD, and Zero-Order, respectively, for hyperparameters $\lambda = 0.003$ and $\beta = 5.0$}
\end{figure}

\begin{figure}[H]
  \centering
    \includegraphics[width=0.9\textwidth]{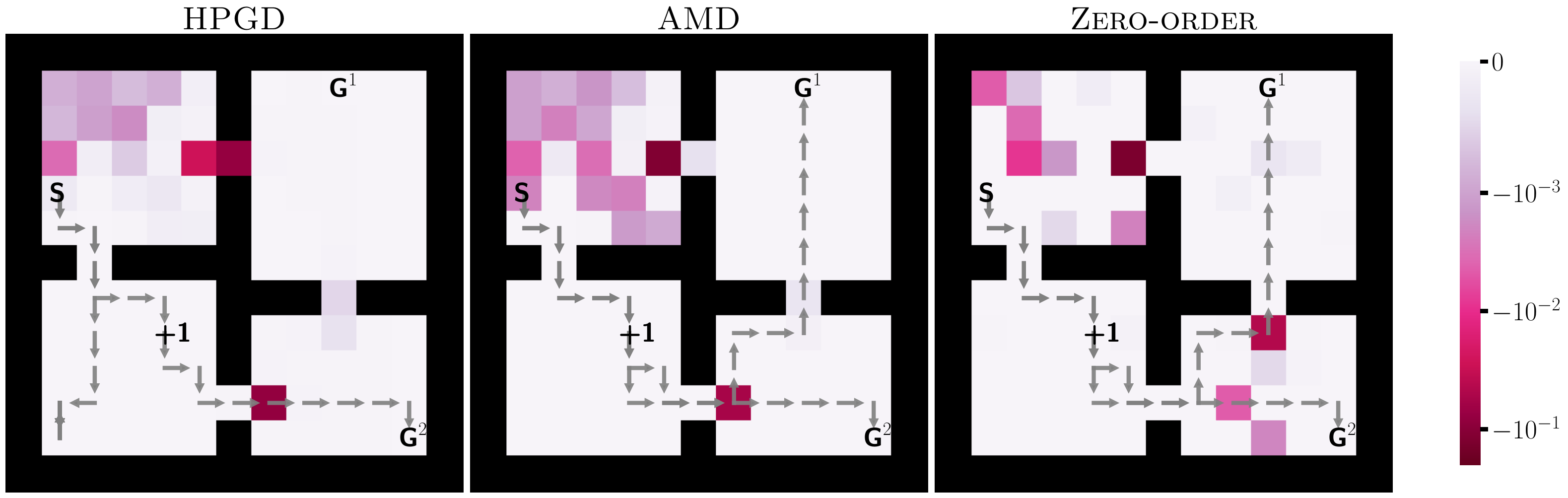}
    
  \caption{Reward penalties given to the lower-level agent in each state of the Four-Rooms problem optimized by the \salgo, AMD, and Zero-Order, respectively, for hyperparameters $\lambda = 0.005$ and $\beta = 1.0$}
\end{figure}

\begin{figure}[H]
  \centering
    \includegraphics[width=0.9\textwidth]{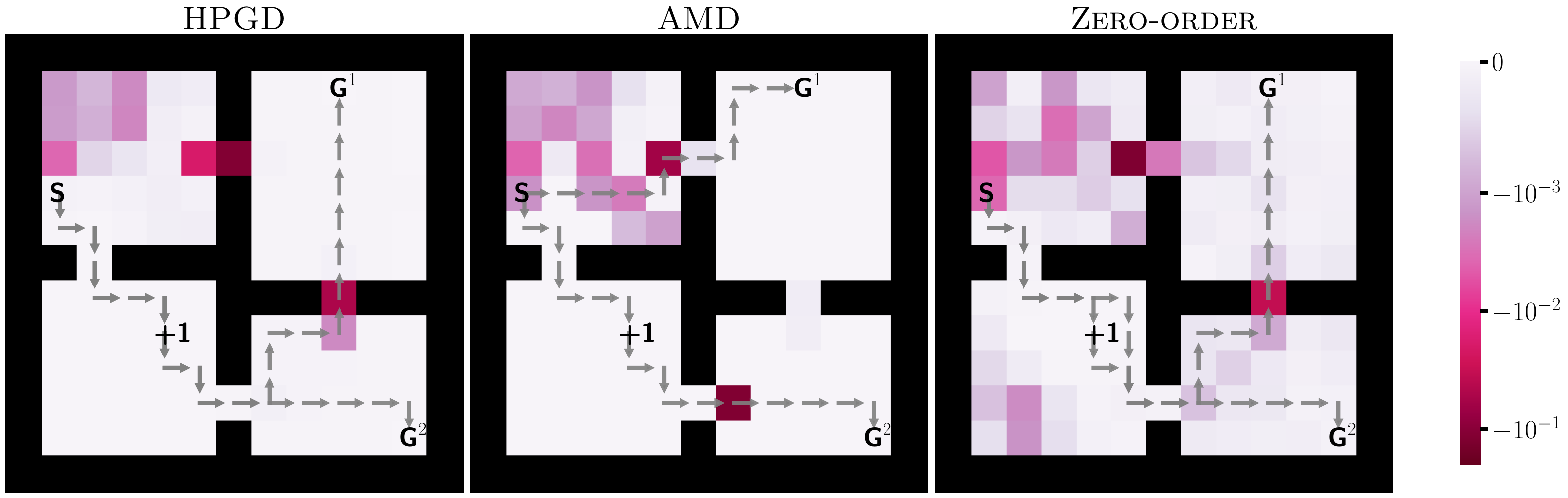}
    
  \caption{Reward penalties given to the lower-level agent in each state of the Four-Rooms problem optimized by the \salgo, AMD, and Zero-Order, respectively, for hyperparameters $\lambda = 0.005$ and $\beta = 3.0$}
\end{figure}

\begin{figure}[H]
  \centering
    \includegraphics[width=0.9\textwidth]{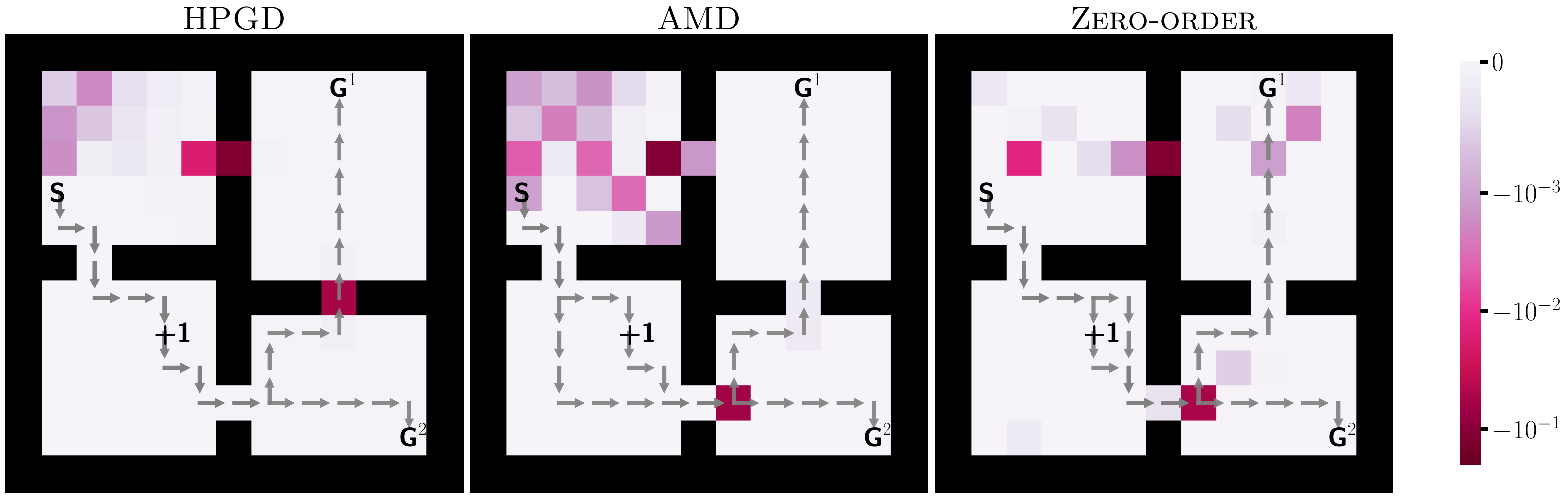}
    
  \caption{Reward penalties given to the lower-level agent in each state of the Four-Rooms problem optimized by the \salgo, AMD, and Zero-Order, respectively, for hyperparameters $\lambda = 0.005$ and $\beta = 5.0$}
\end{figure}

\subsection{Tax Design for Macroeconomic Models}
\subsubsection{Implementation Details}
\label{appendix:tax_design_implementation_details}
In our experiments, we use $\sigma(s) = \max(0, \log(s/20 +1))$ and $\omega(s) = \min(s, 0)$ and select the hyperparameters as $\theta = 0.1, \phi = 5.0, \varsigma = 5.0, w = 1.0$ and $\lambda = 0.3$.
We use $3$ products in the simulated economy with unit prices and assume two equal-sized socio-economic groups with preferences $\alpha = (0.6, 0.3, 0.1)$ and $(0.1, 0.7, 0.2)$.
Assets are initialized as $s_0 \sim N(0, \sqrt{2})$ and each episode is truncated after $200$ steps.
Both chosen consumption levels and hours worked are discretized with ranges $[0, 5]$ and $[0, 8]$ and cardinality $5$ and $10$, respectively.
All tax rates are initialized as $0.3$ and optimized over the continuous domain $[0.0, 2.0]$.

\subsubsection{Effects of lower-level regularization}
\Cref{fig:tax_03} and \Cref{fig:tax_01} shows the results for $\lambda=0.3$ and $\lambda=0.1$, respectively. Changing the regularization has small effects on the final results and only marginally change the speed of convergence, especially for the Zero-Order algorithm.

\begin{figure}
    \begin{subfigure}[T]{0.4\textwidth}
        \centering
        \includegraphics[width=\textwidth]{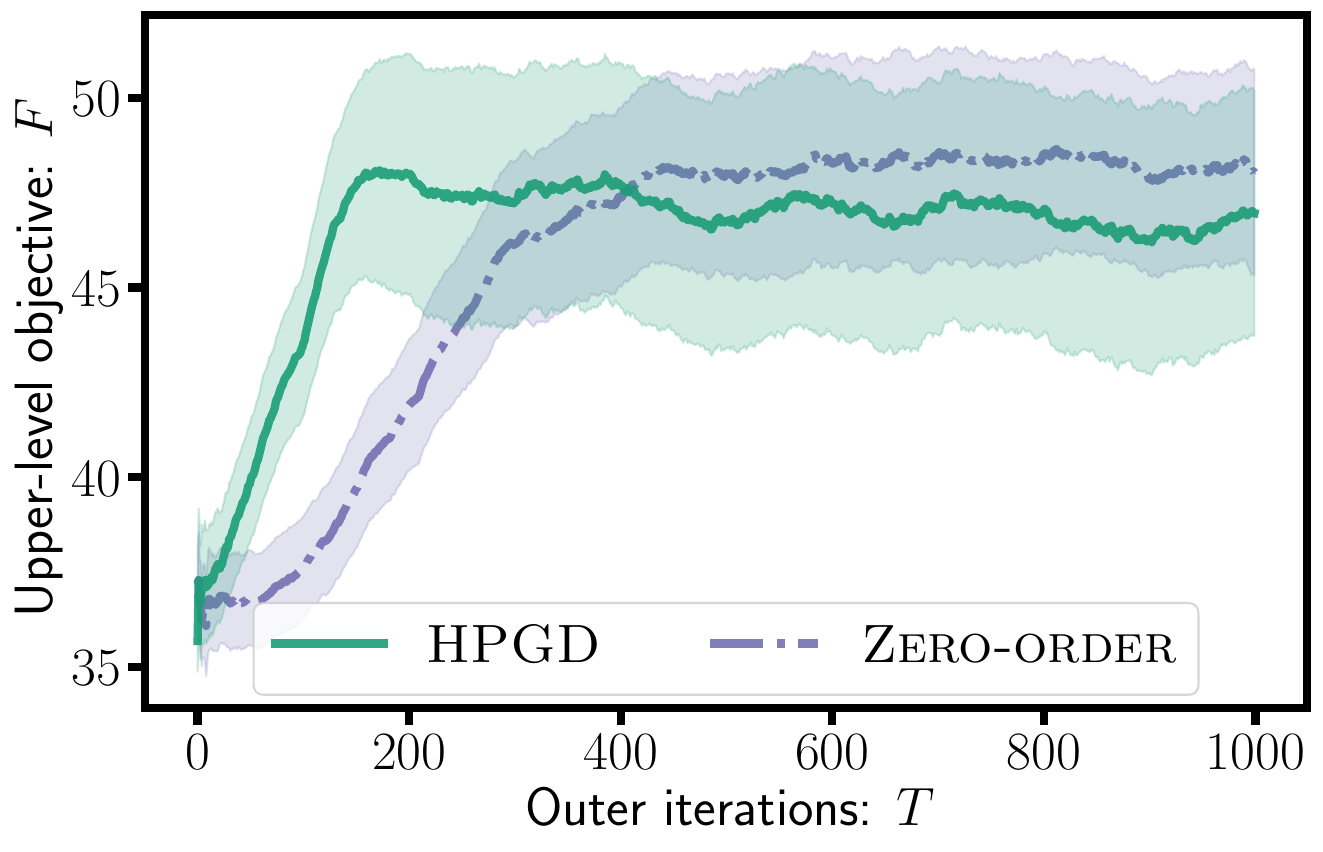}
        \caption{Performance on the Tax Design problem}
        \label{fig:tax_design_performance_03}
    \end{subfigure}
    \hspace{2pt}
    \begin{subfigure}[T]{0.6\textwidth}
        \centering
        \includegraphics[width=\textwidth]{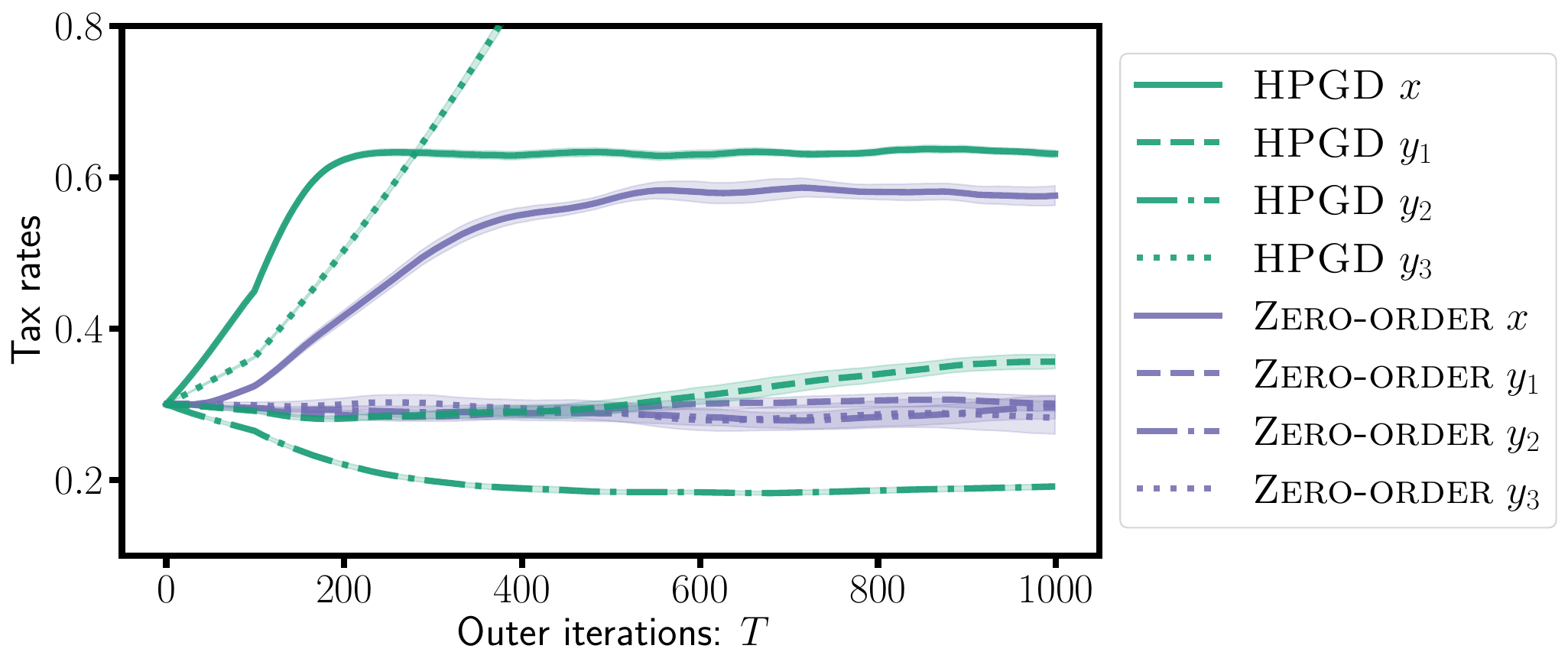}
        \caption{Tax rates over the outer iterations.}
        \label{fig:tax_design_tax_outcomes_03}
    \end{subfigure}
    \caption{Results with $\lambda = 0.3$}
    \label{fig:tax_03}
\end{figure}

\begin{figure}
    \begin{subfigure}[T]{0.4\textwidth}
        \centering
        \includegraphics[width=\textwidth]{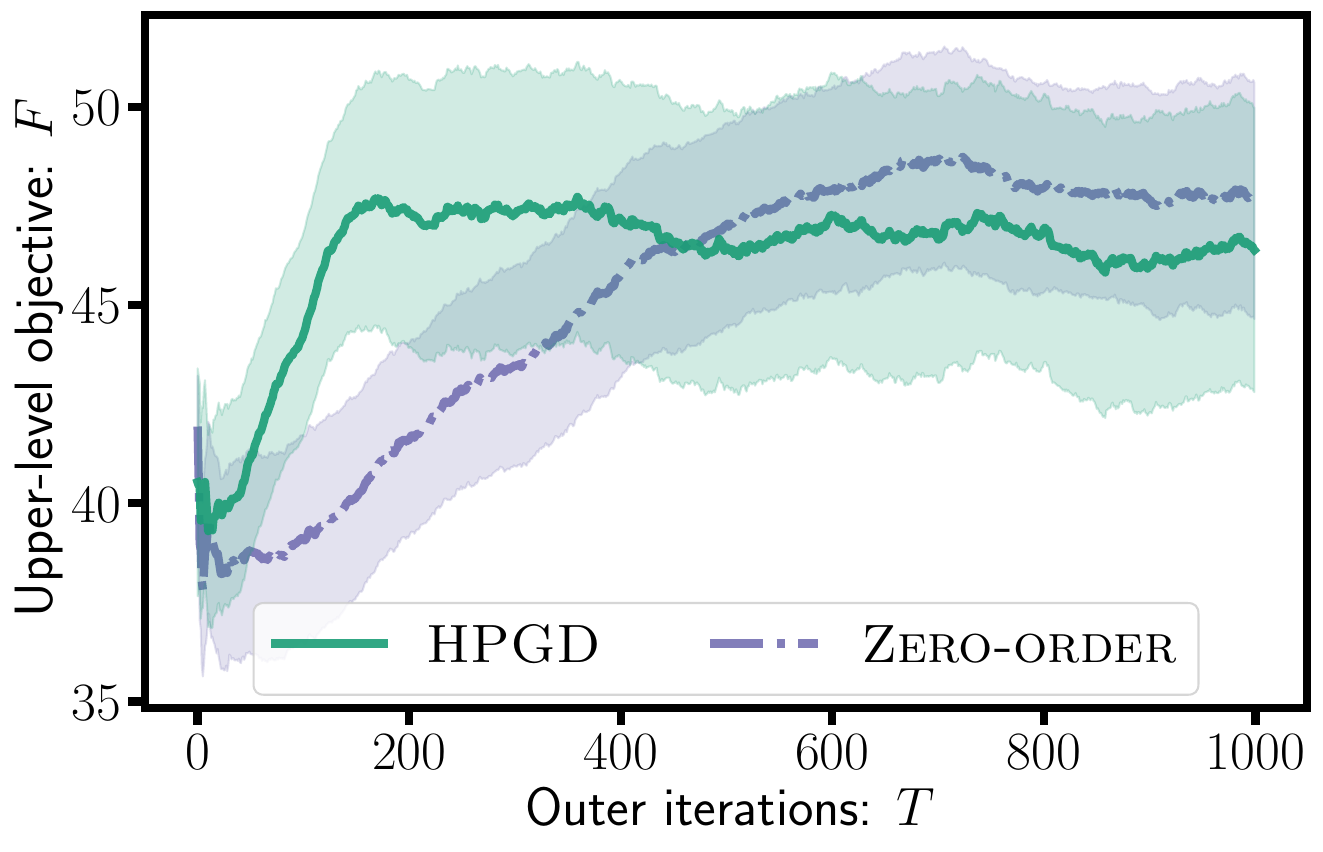}
        \caption{Performance on the Tax Design problem}
        \label{fig:tax_design_performance_01}
    \end{subfigure}
    \hspace{2pt}
    \begin{subfigure}[T]{0.6\textwidth}
        \centering
        \includegraphics[width=\textwidth]{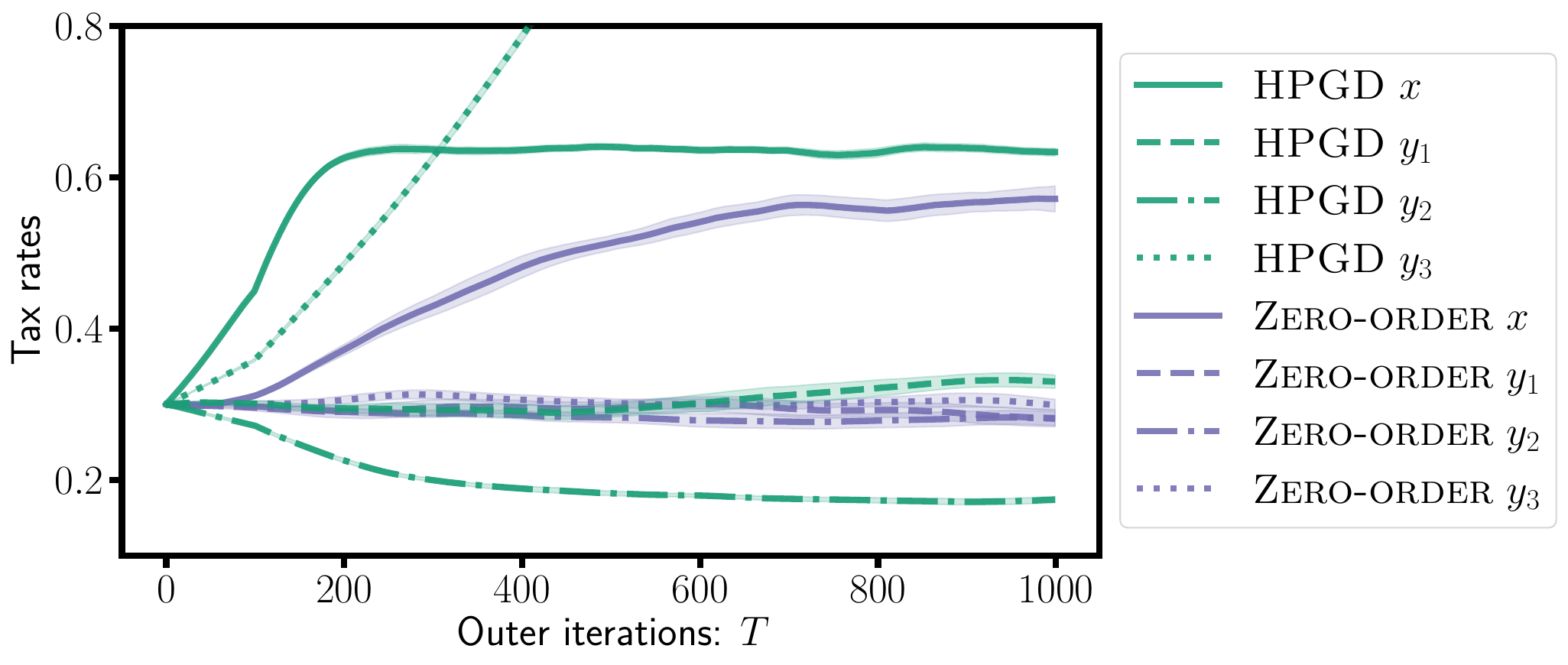}
        \caption{Tax rates over the outer iterations.}
        \label{fig:tax_design_tax_outcomes_01}
    \end{subfigure}
    \caption{Results with $\lambda = 0.1$}
    \label{fig:tax_01}
\end{figure}

\subsection{Computational Costs}
\label{appendix:computational_costs}
We ran our experiments on a shared cluster equipped with various NVIDIA GPUs and AMD  EPYC CPUs. Our default configuration for all experiments was a single GPU with 24 GB of memory, 16 CPU cores, and 4 GB of RAM per CPU core.
For all parameter configurations reported in \Cref{table:4rooms_parameter_comparison}, the total runtime of the experiments for \salgo, AMD, and Zero-Order were 17, 40, and 2 hours, respectively, totaling 59 hours.
Our total computational costs including the intermediate experiments are estimated to be 2-3 times more.

\end{document}